\newcommand{\brr}{\colon\!}
\newcommand{\charf}[1]{\chi_{_{#1}}}
\newcommand{\mbf}[1]{\mathbf{#1}}
\newcommand{\fat}[1]{\mathds{#1}}
\newcommand{\hsm}[1]{\mathscr{#1}}
\newcommand{\EE}{\fat{E}}
\newcommand{\NN}{\fat{N}}
\newcommand{\RR}{\fat{R}}
\newcommand{\RRplus}{\RR_{_{\geq 0}}}
\renewcommand{\SS}{\fat{S}}
\newcommand{\power}[1]{\mathbf{2}^{#1}}
\newcommand{\metr}[1]{\mathtt{Metr}\left(#1\right)}
\newcommand{\res}[1]{\left|_{#1}\right.}
\newcommand{\hau}[1]{#1_{_{hau}}}
\newcommand{\diag}[1]{\mathtt{diag}_{#1}}	
\newcommand{\length}[1]{\Lambda\left(#1\right)} 
\newcommand{\paths}[1]{\hsm{P}\left(#1\right)} 
\newcommand{\floor}[1]{\left\lfloor #1\right\rfloor}
\newcommand{\minP}{\mbf{0}}
\newcommand{\maxP}{\mbf{0}^\ast}
\newcommand{\zapprox}[2]{\left\lfloor #2\right\rfloor_{#1}}
\newcommand{\cub}[1]{\square^{#1}}
\newcommand{\flip}[2]{[#2]_{_{#1}}}
\newcommand{\THEN}{\;\Rightarrow\;}
\newcommand{\IFF}{\;\Leftrightarrow\;}
\newcommand{\minus}{\smallsetminus}
\newcommand{\symplus}{\vartriangle}
\newcommand{\ep}{\hfill $\square$}
\newcommand{\cl}[1]{\overline{#1}}
\newcommand{\inv}{^{{\scriptscriptstyle -1}}}
\newcommand{\set}[2]{\left\{#1\,\Big|\,#2\right\}}
\newcommand{\card}[1]{\left|#1\right|}
\newcommand{\norm}[1]{\left\Vert #1\right\Vert}
\newcommand{\diam}[1]{\mathtt{diam}\!\left(#1\right)}
\newcommand{\dist}[2]{\mathtt{dist}\!\left(#1,#2\right)}
\newcommand{\med}{\mathrm{med}}
\newcommand{\half}[1]{\mathfrak{h}_{#1}}
\newcommand{\wall}[1]{\mathfrak{w}_{#1}}
\newcommand{\cube}[2]{\mathtt{Cube}_{#1}(#2)}
\newcommand{\cubeone}[2]{\mathtt{Cube}^{#1}_1(#2)}
\newcommand{\cubeinfty}[2]{\mathtt{Cube}^{#1}_\infty(#2)}
\newcommand{\cubep}[2]{\mathtt{Cube}^{#1}_p(#2)}
\newcommand{\ellone}[1]{\mbf{\Delta}_1^{#1}}
\newcommand{\ellinfty}[1]{\mbf{\Delta}_\infty^{#1}}
\newcommand{\ellp}[1]{\mbf{\Delta}_p^{#1}}
\newtheorem{theorem}{Theorem}[subsection]
\newtheorem{proposition}[theorem]{Proposition}
\newtheorem{lemma}[theorem]{Lemma}
\newtheorem{corollary}[theorem]{Corollary}
\newtheorem{definition}[theorem]{Definition}
\theoremstyle{definition}
\newtheorem{example}[theorem]{Example}
\newtheorem{question}[theorem]{Question}
\theoremstyle{remark}
\newtheorem{remark}[theorem]{Remark}
\begin{document}

\title{Injective Metrizability and the Duality Theory of Cubings}

\author{Jared Culbertson}
\address{Sensors Directorate, Air Force Research Laboratory, 2241 Avionics Circle, Building 620
Wright--Patterson Air Force Base, Ohio 45433-7302, USA.}
\email{jared.culbertson@us.af.mil}

\author{Dan P. Guralnik}
\address{Electrical \& Systems Engineering Dept., University of Pennsylvania, 200 S. 33rd st., 203 Moore Building, Philadelphia, Pennsylvania 19104-6314, USA.}
\email[Corresponding author]{guraldan@seas.upenn.edu}

\author{Peter F. Stiller}
\address{Department of Mathematics, MS3368, Texas A\&M University, College Station, Texas 77843-3368, USA.}
\email{stiller@math.tamu.edu}

\maketitle

\begin{abstract} Following his discovery that finite metric spaces have injective envelopes naturally admitting a polyhedral structure, Isbell, in his pioneering work on injective metric spaces, attempted a characterization of cellular complexes admitting the structure of an injective metric space.  
A bit later, Mai and Tang confirmed Isbell's conjecture that a simplicial complex is injectively metrizable if and only if it is collapsible.
Considerable advances in the understanding, classification and applications of injective envelopes have since been made by Dress, Huber, Sturmfels and collaborators
, and most recently by Lang. Unfortunately a combination theory for injective polyhedra is still unavailable.

\smallskip
Here we expose a connection to the duality theory of cubings -- simply connected non-positively curved cubical complexes -- which provides a more principled and accessible approach to Mai and Tang's result, providing one with a powerful tool for systematic construction of locally-compact injective metric spaces:

\smallskip
{\bf Main Theorem. } Any complete pointed Gromov--Hausdorff limit of locally-finite piecewise-$\ell_\infty$ cubings is injective.\ep

\smallskip
This result may be construed as a combination theorem for the simplest injective polytopes, $\ell_\infty$-parallelopipeds, where the condition for retaining injectivity is the combinatorial non-positive curvature condition on the complex. Thus it represents a first step towards a more comprehensive combination theory for injective spaces. 

\smallskip
In addition to setting the earlier work on injectively metrizable complexes within its proper context of non-positively curved geometry, this paper is meant to provide the reader with a systematic review of the results~ ---~ otherwise scattered throughout the geometric group theory literature~ ---~ on the duality theory and the geometry of cubings, which make this connection possible.
\end{abstract}

\bigskip
\noindent {\bf Keywords:} Injective metric space, cubing, poc set, median algebra
\bigskip

\noindent {\bf 2010 MSC:} 51K05 (primary); 57M99, 05C10

\section{Introduction}
This paper arose out of an investigation of the mathematical foundations of the problem of unsupervised clustering of large data sets modeled as finite metric spaces.
In Section~\ref{clustering}, we describe the link between practical clustering and the theoretical work that follows.
Example ``real-world'' applications are community detection in large networks; semantic partitioning of point clouds; image segmentation and the derivation of phylogenetic trees, among many others.
Historically rooted in the field of phylogenetics (see~\cite{Felsenstein} for some history and discussion), much of the initial effort in this field was invested in obtaining consistent approximations of metric spaces by various ``treelike'' objects, such as metric trees and dendrograms~(see~\cite{Mullner-modern_clustering_algorithms} for an overview of relevant methods and~\cite{Carlsson_Memoli-stability,Carlsson_Memoli-classifying_clustering_schemes} for a more modern tack).
Most notably, a deep and principled approach formulated by Buneman~\cite{Buneman} has led to a powerful and prolific thrust by Dress, Sturmfels and collaborators (see~\cite{Sturmfels-biology_new_theorems,Dress_Huber_Moulton-clustering} for overviews) towards understanding a metric space in terms of canonically associated split-decomposable metrics~\cite{Bandelt_Dress-cut_metrics},~ ---~ one of several higher-dimensional generalizations of trees~ ---~ leading to methods for distance-based clustering with overlaps.
Subsequent work~\cite{DHM-Buneman_and_tight_span,DHM-envelope_cut_points,DMSW-splits_from_cuts_in_envelope} has made it clear that the {\em injective envelope}\footnote{Independently introduced in~\cite{Isbell-injective_envelope,Dress-tight_extensions} and~\cite{CL-one_more_envelope}, injective envelopes have since been referred to as {\em injective hulls} and {\em tight spans}. We will mostly refer to them simply as `envelopes,' for short.} of a metric space $(X,d)$ plays a fundamental, though not yet completely understood, role in distance-based clustering. 

\medskip
The injective envelope of a metic space $(X,d)$ may be seen as a complete geodesic extension satisfying certain minimality requirements. While it is relatively easy to show that the envelope of a finite tree-metric space is the geometric realization of an edge-weighted tree (the weights being interpreted as edge lengths), envelopes of more general finite metric spaces have a natural piecewise $\ell^\infty$ polyhedral structure of very high complexity. This has been studied in complete detail by Sturmfels and Yu for spaces with up to six points~\cite{Sturmfels_Yu-six_point_envelopes}. A toolkit for studying this structure for envelopes of more general discrete metric spaces was developed by Lang~\cite{Lang-injective_envelopes_and_groups}, with applications to group theory in mind, where injective envelopes are proposed as an alternative to existing polyhedral model spaces (such as the Rips complex) for some classes of finitely generated groups (e.g. Gromov-hyperbolic groups). We review the relevant notions Section~\ref{injective envelopes}, and expound on the connection between injectivity and clustering via tree-inspired splits in Section~\ref{splits}.

\medskip
In parallel with these developments, other useful abstractions of the graph-theoretic notion of a tree have emerged. In particular, Buneman's tree metric spaces, defined as metrics obtained from an edge-weighted tree by restricting to a finite subspace, may be seen as a special case of {\em cut metrics}, which, in turn, may be characterized as metrics induced on a finite subset from a median metric space (see last paragraph of Section~\ref{splits} for some details). The discrete variant of a median metric space is a piecewise-$\ell_1$ {\em cubing}~ ---~ a simply-connected non-positively curved cubical complex (see Definition~\ref{defn:cubing}) where the geometries of all cells are modeled on finite-dimensional axis-parallel parallelopipeds in $\ell_1$. This, in view of the relative simplicity of the cellular structure of a cubing as well as of results on the geometry of envelopes of cut metrics (again see the end of Section~\ref{splits}), motivates further study of natural relationship between cubings and injective metric spaces.

\medskip
The purpose of this paper is twofold. First, we provide an extension and a new, significantly simplified, proof of a result of Isbell~\cite{Isbell-injective_envelope} and Mai and Tang~\cite{Mai_Tang-collapsible_is_injective} on the existence of injective metrics on finite collapsible simplicial complexes by leveraging a connection between the geometry of piecewise-$\ell_1$ cubings and the geometry of the same cubings, but taken with a piecewise-$\ell_\infty$ geometry. An overview of the proof is provided in Section~\ref{results overview}. Second, this paper was intended to collect in one place the results~ ---~ otherwise scattered throughout the geometric group theory literature~ ---~ relating the geometry of cubings with their combinatorics, especially as expressed by the duality, discovered by Sageev~\cite{Sageev-thesis} and Roller \cite{Roller-duality}, between cubings and partially-ordered complemented sets (or {\em poc sets}, for short). Incidentally, putting these results in one place facilitates a rather self-contained description of the piecewise-$\ell_\infty$ geometry of cubings, appearing here for the first time.

\subsection{A motivating application: distance-based clustering}\label{clustering}
Loosely stated, the prototypical problem of distance-based clustering is that of consistently assigning a partition $\hsm{P}(X,d)$ of the base set $X$ to every finite metric space $(X,d)$; the elements of $\hsm{P}(X,d)$ are usually referred to as {\it clusters} of $(X,d)$ with respect to the particular clustering method. For the purposes of this discussion we will adopt the restriction {\it that the clustering method be a well-defined mapping} in the sense that, for every non-empty set $X$, the assignment $(X,d)\mapsto\hsm{P}(X,d)$ is, as the notation suggests, a function $\hsm{P}_X$ of the set $\metr{X}$ of all metrics\footnote{We refer to non-negative symmetric functions $d:X\times X\to\RR$ as `metrics,' if they satisfy the triangle inequality $d(x,y)\leq d(x,z)+d(z,y)$ for all $x,y,z\in X$. Contrary to more standard naming practices (e.g. ``pseudo-metric''), it is convenient in the context of finite metric spaces not to exclude from the definition functions possibly satisfying $d(x,y)=0$ for pairs $x,y$ with $x\neq y$, so that $\metr{X}$ becomes a {\em closed} pointed convex cone in the real vector space $\RR^{X\times X}$.} on $X$ to the set of all partitions of $X$. It is important to mention, though, that some overwhelmingly popular clustering methods do not satisfy this requirement. For example, {\it $K$-means clustering}\footnote{$K$-means clustering is defined for point-clouds in Euclidean space (that is, $X\subset\RR^n$, with the metric $d$ induced from the standard Euclidean structure), but could be applied to a general metric space $(X,d)$ after, say, a minimal distortion embedding.} is obtained through what is essentially a gradient-descent algorithm, known as Lloyd's algorithm\footnote{See~\cite{Lloyd} for the paper introducing K-means clustering, and~\cite{SWM-fast_large_Kmeans} for an example modern discussion of variations necessitated by current practical challenges which emerged with the onset of the era of  `big data' analysis.}, where the target function is known, in general, to have multiple local minima in the space of partitions. Consequently, the output of the algorithm is sensitive to the choice of seed partition provided at initialization.

\medskip
Phylogenetics motivates a slightly different, more general approach to clustering, called {\em hierarchical clustering}, realized formally in replacing the ranges of the clustering maps $\hsm{P}_X$ with more versatile spaces. For example, if one is interested in classifying a collection of individuals, represented by the points of $X$, at varying scales determined by the hypothetical moments in time when their ancestral lines diverged, the correct objects to map to are not mere partitions, but, rather, rooted metric trees whose leaves are bijectively labelled by the points of $X$. Consequently, one requires a mapping from $\metr{X}$ to the {\em space of phylogenetic trees} over $X$, the space of geometric realizations of rooted trees with leaf set $X$. A polyhedral model for this space admitting a CAT(0) geometry (see Section~\ref{NPC} below) was constructed by Billera, Holmes and Vogtmann~\cite{BHV-treespace}, who were motivated by the need for mathematical foundations for the statistical analysis of the output of hierarchical clustering maps. 

\medskip
In a different situation, one might be interested in extracting a simplified model of genealogical proximity from the sample space $(X,d)$, consequently trying to map $\metr{X}$ merely to a metric tree which contains an isometric copy of $X$, without enforcing an explicit representation of the common ancestor for all samples, or that the samples be represented as leaves. This allows, for example, for a sample to lie on the geodesic between two other samples, enabling inferences regarding its intermediacy. See~\cite{Dress_Huber_Moulton-clustering} for some history and a discussion of the mathematics of Phylogenetic analysis.

\medskip
Following the terminology of Dress~\cite{Dress-tight_extensions}, consider:
\begin{definition}\label{defn:tree,tree metric} A metric space $(Z,d)$ is said to be a tree\footnote{The definition of a tree provided here is, as stated, stronger than that of an $\RR$-tree, but may be shown to be equivalent to it~ ---~ see~\cite{Mayer_Oversteegen-Rtrees} for a detailed discussion.} if it is uniquely geodesic, and for any arc $z\brr [0,1]\to Z$, $t\mapsto z_t$ and for any $t\in[0,1]$ one has $d(z_0,z_t)+d(z_t,z_1)=d(z_0,z_1)$. A metric $d'$ on a finite set $X$ is said to be a {\em tree metric on $X$}, if $(X,d')$ embeds isometrically in some tree $(Z,d)$. 
\end{definition}
Buneman in~\cite{Buneman} proposes to study (and constructs) clustering maps of the form $\hsm{P}_X:\metr{X}\to\metr{X}$ which are non-expansive retractions onto the subspace of tree metrics on $X$.  Bunemann's construction proceeds as follows. First, given a metric $d$, one constructs a system $\mathscr{N}$ of nested binary partitions (or {\em splits}) $\sigma=\{S,X-S\}$ of $X$: every split $\sigma$ has a `width' parameter associated with it, 
\begin{displaymath}
	\mu^d_\sigma=\frac{1}{2}\min_{\substack{x,y\in S\\ z,w\notin S}}
	\left(
		\min\left(\begin{array}{c}
			d(x,z)+d(y,w)\\
			d(x,w)+d(y,z)
		\end{array}
		\right)-(d(x,y)+d(z,w))
	\right)
\end{displaymath}
and $\mathscr{N}$ is the set of splits of positive width. A tree metric is obtained by setting $\hsm{P}_X(d)=\sum_{\sigma\in\mathscr{N}}\mu^d_\sigma\delta_\sigma$, where $\delta_\sigma(x,y)\in\{0,1\}$ is zero if and only if both $x,y\in S$ or both $x,y\notin S$. The metrics $\delta_\sigma$ are called {\em cuts}. Non-negative combinations of cuts are called {\em cut metrics}, and Buneman characterizes tree metrics precisely as those cut-metrics which may be writtend down as non-negative combinations of cuts from a nested\footnote{The word used by Buneman is `compatible,' but here we will stick to the terminology that was developed for cubings, as it seems more evocative of the right geometric intuition.} system of splits. Similar notions of clustering mappings of this form are discussed in~\cite{Moulton_Steel-refined_Buneman}.

\medskip
An important thing to notice about Buneman's construction is not only that Buneman's clustering map produces a tree metric on $X$, but that it also produces an explicit combinatorial description of a tree $Z$ in which this tree metric embeds: in a nutshell, each $S\in\hsm{N}$ may be seen as the partition of $X$ induced by removing a single edge of $Z$ (more details about the notion of nesting appear below in Section~\ref{section:geometry of cubings}).

\subsection{Injective envelopes}\label{injective envelopes} The study of injective metric spaces arose from the study of hyper-convexity in functional analysis (see Theorem~\ref{injectivity criterion}). Most notably, the characterization of hyper-convexity by Aronszajn and Panitchpakdi led to Isbell's study of this class of spaces from a categorical viewpoint. Isbell introduces the category of metric spaces with non-expansive maps as morphisms, and considers the injective objects of this category with respect to the class of isometric embeddings:\footnote{Note that the requirement from $i\brr A\to B$ to be an isometric embedding rather than just a monic map in the category produces a notion of injectivity that is weaker than monic-injectivity.} 
\begin{definition}[Injective Metric Space \cite{Isbell-injective_envelope}]\label{defn:injective space} A metric space $X$ is said to be injective, if for any isometric embedding $i\brr A\to B$ and any non-expansive map $f\brr A\to X$ there exists a non-expansive $F\brr B\to X$ satisfying $F\circ i=f$. 
\end{definition}

An injective space $X$ is geodesic: pick $x,y\in X$ and consider the isometry $i\brr \{0,d(x,y)\}\to X$ with $i(0)=x$ and $i(d(x,y))=y$ and its mandated non-expansive extension to $F\brr [0,d(x,y)]\to X$; taking $0<s<t<d(x,y)$ and applying the triangle inequality twice easily leads to the conclusions that $d(x,F(s))=s$, $d(F(s),F(t))=t-s$ and $d(F(t),y)=d(x,y)-t$.

\medskip
An injective space $X$ is complete: let $\widetilde{X}$ denote the Cauchy completion of $X$; then the identity map from $X$ to $X$ extends to a non-expansive map of $\widetilde{X}$ to $X$; since $X$ is dense in $\widetilde{X}$ this map must be an isometry, so $X$ is complete.

\medskip
One of the simplest non-trivial examples of a class of injective spaces  is the class of trees introduced above in Definition~\ref{defn:tree,tree metric}. Lang provides a direct argument (see~\cite{Lang-injective_envelopes_and_groups}, Proposition 2.1), and an indirect one may be obtained through the equivalence between injectivity and hyper-convexity:
\begin{theorem}[Aronszajn-Panitchpakdi~\cite{hyperconvex_spaces}]\label{injectivity criterion} A metric space $(X,d)$ is injective if and only if it is hyper-convex: every finite collection of closed balls $\{B(p_i,r_i)\}_{i=1}^n$ in $X$ satisfying $r_i+r_j\geq d(p_i,p_j)$ for all $1\leq i,j\leq n$ has a common point.
\end{theorem}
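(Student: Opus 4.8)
The plan is to prove the two implications separately and essentially by hand; the whole content is the observation that ``extending a non-expansive map across one extra point'' is literally the same problem as ``finding a point common to a family of pairwise-compatible closed balls.''

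\emph{Injectivity $\Rightarrow$ hyperconvexity.} Given closed balls $\bar B(p_1,r_1),\dots,\bar B(p_n,r_n)$ in $X$ with $r_i+r_j\ge d(p_i,p_j)$ for all $i,j$, I would first dispose of the trivial case where some $r_i=0$ (then $p_i$ already lies in every $\bar B(p_j,r_j)$), so assume all radii positive. Take $A:=\{p_1,\dots,p_n\}\subseteq X$ with the induced metric and let $f\colon A\to X$ be the inclusion, which is isometric and a fortiori non-expansive. Enlarge $A$ by one new point $b$, setting $d(b,p_i):=\min_{1\le j\le n}\bigl(r_j+d(p_i,p_j)\bigr)$. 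Choosing $j=i$ gives $d(b,p_i)\le r_i$, and the triangle inequalities for triples $\{b,p_i,p_k\}$ follow from the triangle inequality in $X$ together with the hypothesis $r_j+r_l\ge d(p_j,p_l)$, so $B:=A\cup\{b\}$ is a metric space into which $A$ embeds isometrically. By injectivity, $f$ extends to a non-expansive $F\colon B\to X$; then $x:=F(b)$ satisfies $d\bigl(x,p_i\bigr)=d\bigl(F(b),F(p_i)\bigr)\le d(b,p_i)\le r_i$ for every $i$, whence $x\in\bigcap_{i=1}^n\bar B(p_i,r_i)$. This direction uses the ball hypothesis only for finite families.

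\emph{Hyperconvexity $\Rightarrow$ injectivity.} Given an isometric embedding $i\colon A\to B$ and a non-expansive $f\colon A\to X$, I would identify $A$ with $i(A)\subseteq B$ and consider the poset of \emph{partial} non-expansive extensions: pairs $(C,g)$ with $A\subseteq C\subseteq B$, $g\colon C\to X$ non-expansive and $g|_A=f$, ordered by extension. It is non-empty and the union of any chain is again such a pair, so Zorn's lemma yields a maximal $(C,g)$. If $C\neq B$, pick $b\in B\minus C$ and look at the family $\bigl\{\bar B\bigl(g(c),d(b,c)\bigr)\bigr\}_{c\in C}$: for $c,c'\in C$ one has $d\bigl(g(c),g(c')\bigr)\le d(c,c')\le d(c,b)+d(b,c')$, so the family is pairwise compatible; hyperconvexity supplies $x$ in its intersection, and putting $g(b):=x$ extends $g$ non-expansively to $C\cup\{b\}$, contradicting maximality. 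Hence $C=B$ and $F:=g$ is the required extension.

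The only place where genuine care is needed is that the family of balls in the second implication is indexed by the possibly infinite set $C$, so that direction really requires the hyperconvexity hypothesis for \emph{arbitrary} families of mutually compatible balls, not merely finite ones; accordingly the statement should be read with ``finite'' deleted (equivalently, with $X$ assumed complete, so that a limiting argument recovers the general case from the finite one). Everything else — verifying the triangle inequality for the metric on $A\cup\{b\}$, and that a union of a chain of non-expansive maps is non-expansive — is routine, and I anticipate no real obstacle.
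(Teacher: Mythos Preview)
The paper does not prove this theorem; it is quoted with attribution to Aronszajn and Panitchpakdi and cited from \cite{hyperconvex_spaces}, so there is no in-paper argument to compare yours against. Your two-direction argument is the standard one and is correct.

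Your closing caveat is well taken: the Zorn's-lemma direction genuinely needs the ball-intersection hypothesis for \emph{arbitrary} families, so the word ``finite'' in the paper's formulation is an imprecision. One correction to your parenthetical, however: completeness alone does \emph{not} recover the general case from the finite one. The space $c_0$ of null real sequences with the sup norm is complete and finitely hyperconvex (apply Helly for intervals coordinatewise, and note that $0$ eventually lies in every coordinate interval so the resulting point can be chosen in $c_0$), yet the countable family $\{B(e_i,\tfrac12)\}_{i\in\NN}$ is pairwise compatible with empty total intersection, so $c_0$ is not hyperconvex and hence not injective. What is actually needed to pass from finite to arbitrary families is compactness of closed balls, i.e.\ properness of $X$. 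This causes no trouble for the paper's applications: in the proof of the limit lemma the authors explicitly invoke compactness of $B_X(b,R)$ at the final step, and elsewhere the criterion is applied only to finite cubings.
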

An excellent and largely self-contained exposition of injectivity in metric spaces is provided in Section 2 of~\cite{Lang-injective_envelopes_and_groups}, including an independent proof of the above characterization, given there as Proposition 2.3.

\medskip
Isbell, in~\cite{Isbell-injective_envelope}, attempts two tasks that are natural in the categorical formulation of injectivity described above: the construction of minimal injective objects and the classification of injective objects. 
For the first task, Isbell proves the existence of injective envelopes. Recall the definition:
\begin{definition} Let $X$ be a metric space. An injective envelope for $X$ is an isometric embedding $e\brr X\to \epsilon X$ into an injective metric space $\epsilon X$ such that any isometry $f\brr X\to Z$ of $X$  into an injective metric space $Z$ may be written\footnote{In fact, Theorem 3.3 in~\cite{Lang-injective_envelopes_and_groups}, which discusses properties of Isbell's construction of an injective envelope for $X$,  implies that the map $g$ is uniquely determined by $f$. In the language of category theory, $\epsilon X$ is the result of a universal construction.} as $f=g\circ e$ for some isometry $g:\epsilon X\to Z$. 
\end{definition}
In fact, Isbell's construction, later independently rediscovered by Dress~\cite{Dress-tight_extensions}, is explicit enough to demonstrate that $\epsilon X$ is a compact polyhedron in $\ell_\infty(X)$ when $X$ is finite. This leads to a natural question, which is a part of the classification task of injective metric spaces:
\begin{question} Which polyhedra can be endowed with an injective metric?
\end{question}
Isbell shows that in order to support an injective metric, a (simplicial) polyhedron $X$ must satisfy some basic {\it topological} requirements. For example: $X$ needs to be collapsible.

\medskip
In search of a simple example, consider trees again. Starting from a non-empty finite set of points $X$ in a tree $(Z,d)$,~ ---~ recall Definition~\ref{defn:tree,tree metric}~ ---~ the union $T$ of all arcs in $Z$ joining points of $X$ is the geometric realization of a finite, edge-weighted, combinatorial tree. The argument provided earlier for demonstrating that injective spaces are geodesic may be extended (through the use of medians~ ---~ see paragraph preceding Proposition 2.1 in~\cite{Lang-injective_envelopes_and_groups}) to show that any isometric embedding of $X$ into an injective space $Y$ extends to an isometric embedding of $T$ into $Y$, hence $T$ is an injective envelope for $X$.

\medskip
Tying this example back to distance-based clustering is the fact that the metric $d$ of this example is a tree metric from the start. If $\hsm{P}_X$ is taken to be Buneman's retraction, then $\hsm{P}_X(d)=d$, and we may take $Z$ to be the tree constructed from the splits provided by Buneman's construction. We have just seen that the injective envelope $T$ of $(X,d)$ is contained in $Z$ (in the sense of being isometrically embeddable in $Z$), and, from the fact (to be seen later) that every edge of $Z$ must separate a pair of points in $X$ one deduces that $T=Z$. In other words, for any metric $d$ on $X$, Buneman's construction recovers the injective envelope of $\hsm{P}_X(d)$ (in the form of an edge-weighted combinatorial tree!).

\subsection{The geometry of splits}\label{splits} 
In the context of the clustering problem, the injective envelope serves as a tool for transforming a finite collection of disparate points~ ---~ the data $(X,d)$~ ---~ into a contractible space whose connectivity properties may be studied through, for example, mappings to trees. Intuitively, the injective envelope $\epsilon(X,d)$ is ideal in its role as a filling: it is the `leanest' extension among all `freest' extensions of $(X,d)$. The `freedom' we refer to here is geodesics in injective metric spaces being minimally constrained (hyper-convexity states, so to speak, that you get at least as many geodesics as you need to efficiently connect any number of points through a single commuting station), while `leanness' is to be understood in the sense of the envelope embedding isometrically into {\em any} injective space.

\medskip
Formally, given a finite metric space $(X,d)$, let $\bar d=\hsm{P}_X(d)$ be the Buneman projection of $d$ to the space of tree metrics, and let $T$ be the tree recovered from $\bar d$ as described above, which is also the injective envelope of $\bar{d}$. Since the identity mapping from $(X,d)$ to $(X,\bar{d})$ is non-expansive, it extends to a non-expansive mapping $f$ of $\epsilon(X,d)$ onto $T$, because $T$ is injective. An edge $e$ of $T$ of length $\mu$ pulls back to a cut set $C=f\inv(e)\subset\epsilon(X,d)$ splitting $\epsilon(X,d)$ into two subspaces $A,B$ such that (1) $X\subset A\cup B$, and (2) $d(a,b)\geq\mu$ for all $a\in A$, $b\in B$. From here, one can use these structures in the clustering process, for example: one could ask what partition of $X$ is obtained through the removal of all edges of $T$ of length greater than or equal to some threshold $\delta>0$.

\medskip
For another example illustrating how {\em topological} features of envelopes may be relevant to clustering, recall the work of Ward~\cite{Ward-axioms_for_cutpoints}, which deduces a treelike quotient of a connected and locally connected compact Hausdorff space from its set of cut points. Using~\cite{DHM-envelope_cut_points} to compute the cut points of the injective envelope $\epsilon(X,d)$, one uses the fact that $\epsilon(X,d)$ is a finite polyhedron to argue that the Ward quotient is a finite tree $T$, which, similarly to Buneman's tree, could be used for clustering.

\medskip
It was a fundamental observation of Bandelt and Dress that one need not restrict attention to nested split systems (and hence to trees). Relaxing the notion of width for a split $\sigma=\{S,X-S\}$ from $\mu^d_\sigma>0$ to $\alpha^d_\sigma>0$, where
\begin{displaymath}
	\alpha^d_\sigma=\frac{1}{2}\min_{\substack{x,y\in S\\ z,w\notin S}}
	\left(
		\max\left(\begin{array}{c}
			d(x,z)+d(y,w)\\
			d(x,w)+d(y,z)
		\end{array}
		\right)-(d(x,y)+d(z,w))
	\right)
\end{displaymath}
they prove (see~\cite{Bandelt_Dress-cut_metrics}, Theorems 2 and 3) that every metric $d$ may be written as $d=d_0+d_{\mathscr{S}}$, where (1) $d_0$ has no split $\tau$ with $\alpha^{d_0}_\tau>0$ (that is, $d_0$ is {\em split-prime}); (2) $d_{\mathscr{S}}=\sum_{\sigma\in\mathscr{S}}\alpha^d_\sigma\delta_\sigma$ (the {\em totally split decomposable part} of $d$), and (3) the family $\mathscr{S}$ of splits $\sigma$ satisfying $\alpha^d_\sigma>0$ must satisfy a combinatorial condition called {\em weak compatibility}. Moreover, for any family of splits $\mathscr{S}$, setting $d=d_{\mathscr{S}}$ as above yields $d_0=0$ if and only if $\mathscr{S}$ is weakly compatible.  

\medskip
Thus, a new projection~ ---~ this time onto a space of cut metrics properly extending the space of tree metrics~ ---~ is obtained by mapping $d\mapsto d-d_0$. This projection is more informative than Buneman's in the sense that the containment $\mathscr{N}\subseteq\mathscr{S}$ implies that Buneman's projection factors through this one. This gives rise to a non-expansive mapping from $\epsilon(X,d)$ to that of $\epsilon(X,d-d_0)$ in the same manner as before, encouraging questions regarding ways to characterize the family $\mathscr{S}$ of splits of $X$ by cut sets in $\epsilon(X,d)$.

\medskip
For totally decomposable metrics $d=d_{\mathscr{S}}$, much work has been done studying their injective envelopes in the series of papers~\cite{DHM-Buneman_and_tight_span,DHM-median_vs_tight_span,DHM-more_Bunemann_vs_tight_span}, with emphasis on the role of the {\em Buneman complex}, introduced in~\cite{DHM-the_Buneman_complex}, associated with the split system $\mathscr{S}$ (Theorem 3.1 of~\cite{DHM-median_vs_tight_span} is a good main result to keep in mind). It is now clear, following the independent work of Roller~\cite{Roller-duality}, that the Buneman complex is a geometric realization of the cubing dual to the split system $\mathscr{S}$.

\medskip
Finally, it ought to be mentioned that not all cut metrics are totally split decomposable. In fact, metrics as simple as the Hamming metric on $X=\{0,1\}^m$, $m\geq 3$ are split-prime, which means they are collapsed to points (that is, spaces of zero diameter) under the Bandelt--Dress projection. Nevertheless, recalling that $(X,d)$ is a cut metric if and only if it embeds isometrically in an $\ell_1$-space (Theorem 4.2.6 in~\cite{Deza_Laurent-cutbook}) puts us in a position to also consider $(X,d)$ as a candidate for embedding in a median metric space, the continuous analog of a cubing serving as a higher-dimensional notion of a metric tree (mainly in view of Corollary 5.4 in~\cite{Chatterji_Drutu_Haglund-measured_spaces_with_walls}, where {\em measured spaces with walls}~ ---~ a vast generalization of cut metrics~ ---~ are introduced). Thus, one might hope that the direct connection between the Buneman complex of a totally split-decomposable metric (which is a cubing) and the injective envelope of that metric seems to be only a special case of a more general theory relating splits in $(X,d)$ with measured wall spaces on $X$, with canonically defined cuts in $\epsilon(X,d)$.

\subsection{A little bit on non-positive curvature}\label{NPC}
Although CAT(0) geometry does not play a direct role in this work, it has been (and will further be) mentioned as a source of motivation in this text sufficiently to merit a brief review of the relevant notions. For a much more detailed review we refer the reader to~\cite{Bridson_Haefliger-nonpositive_curvature}, chapter II, on which ours is based.

\medskip
Recall that a {\em geodesic triangle} $\Delta$ with vertices $x,y,z$ in a metric space $(X,d)$ is the union of geodesic arcs $[x,y]$, $[y,z]$ and $[x,z]$,~ ---~ the sides\footnote{As the space $(X,d)$ may not be uniquely geodesic, the notation $[x,y]$ only comes to indicate a particular choice of a geodesic arc joining the endpoints $x$ and $y$.} of the triangle~ ---~ and that a {\em comparison triangle} for $\Delta$ in the Euclidean plane $\EE^2=(\RR^2,d_2)$, $d_2(x,y)=\left\Vert x-y\right\Vert_2$ is a geodesic triangle $\bar\Delta$ in $\EE^2$ with vertices $\bar{x},\bar{y},\bar{z}$ such that $d_2(\bar{p},\bar{q})=d(p,q)$ for $p,q\in\{x,y,z\}$. In other words, each side of $\Delta$ may be mapped isometrically onto the corresponding side of $\bar\Delta$. If $\bar\Delta$ in $\EE^2$ is a comparison triangle for a geodesic triangle $\Delta$ in $(X,d)$, then every point $p\in\Delta$ has a uniquely defined {\em comparison point}, denote $\bar{p}\in\bar\Delta$: simply find a side of $\Delta$ containing $p$ and map it to the corresponding side of $\bar\Delta$; the image of $p$ under this mapping is the desired point $\bar{p}$. Note that a comparison triangle in $\EE^2$ always exists, and is unique up to Euclidean isometry, which makes the following definition meaningful:

\begin{definition}\label{defn:CAT0} A geodesic triangle $\Delta$ in a metric space $(X,d)$ is said to satisfy the CAT(0) inequality if for every $p,q\in\Delta$ one has $d(p,q)\leq d_2(\bar{p},\bar{q})$.
A metric space is said to be CAT(0), if it is geodesic, and it satisfies the CAT(0) inequality.
\end{definition}

Multiple characterizations of the CAT(0) inequality exist (\cite{Bridson_Haefliger-nonpositive_curvature}, chapter II.1). Of the most important properties of CAT(0) spaces one should probably mention the following: a CAT(0) space is uniquely geodesic and contractible ({\it loc. cit.}, proposition II.1.4); every closed convex subset $\varnothing\neq C\subset X$ has a well-defined, non-expansive closest point projection $\pi_C:X\to C$ which is also the endpoint of a strong deformation retraction of $X$ onto $C$ ({\it loc. cit.}, proposition II.2.4); every bounded set has a center ({\it loc. cit.}, proposition II.2.7).

\medskip
A local version of the CAT(0) inequality is as important as the global notion:
\begin{definition} A metric space $(X,d)$ is said to be non-positively curved in the sense of Alexandrov, if every $x\in X$ has some $r_x>0$ such that the open ball $B_d(x,r_x)$ is CAT(0).
\end{definition}
Non-positively curved spaces are relatively easy to construct as finite polyhedra whose cells are chosen to be isometric to polytopes in Euclidean or hyperbolic space: one needs to make sure that the geometric links (see~\cite{Bridson_Haefliger-nonpositive_curvature}, I.7.14-18) satisfy the CAT(1) inequality (meaning that all geodesic triangles of perimeter less than $2\pi$ satisfy the inequality of Definition~\ref{defn:CAT0} with respect to their comparison triangles on the standard sphere of unit curvature, $\SS^2$). This observation was made by Gromov in~\cite{Gromov-hyperbolic_groups}, where he also proved a version of the Cartan--Hadamard theorem (\cite{Bridson_Haefliger-nonpositive_curvature}, II.4.1) guaranteeing that the universal cover of a non-positively curved space is CAT(0). Thus, CAT(0) spaces may be constructed as universal covers of finite non-positively curved piecewise-Euclidean/hyperbolic polyhedra, with Bridson's theorem on shapes (\cite{Bridson_Haefliger-nonpositive_curvature}, Theorem I.7.50) guaranteeing their completeness and the existence of geodesics.

\medskip
A particular family of interest to Gromov in~\cite{Gromov-hyperbolic_groups} was the family of {\em cubical complexes}, where it is required that all cells are embedded Euclidean cubes, glued together by isometries among their faces. He observed that the link of every vertex in such a complex is a simplicial complex, and concluded that the CAT(1) inequality for geometric links is obtained if and only if all the vertex links in the complex are flag complexes (see~\cite{Bridson_Haefliger-nonpositive_curvature}, II.5.15-20 for details in the finite-dimensional case; the general case is due to Leary~\cite{Leary}, Theorem B.8). Hence the definition of a cubing:
\begin{definition}\label{defn:cubing} A cubed complex $X$ is said to be non-positively curved (NPC) if the link of each vertex in $X$ is a simplicial flag complex. The complex $X$ is said to be a {\it cubing}, if it is non-positively curved and simply-connected.
\end{definition}
A few words are in order regarding our insistence on using the term `cubing' rather than ``CAT(0) cubical complex''. Apart from our intention to use the same underlying combinatorial structure to support a metric that is patently not CAT(0), the emphasis on the combinatorics derives from basic questions regarding necessary properties of the piecewise-Euclidean metric on a general cubing. A cubing, in general, may not have a compact quotient by a properly-discontinuous group of cellular maps (e.g., when the dimension of cubes in the complex is unbounded), in which case it will not arise as a result of the construction described above. As stated, Bridson's theorem on shapes does apply to all finite-dimensional cubings, so the problem in the general case is that it is not immediately clear that the piecewise-Euclidean metric on a general cubing makes it into a complete geodesic metric space (though when it does, the Cartan--Hadamrd theorem and Gromov's NPC criterion do guarantee the CAT(0) property). For more information about the general case, see~\cite{Leary}, Appendices A-C.

\subsection{Our results}\label{results overview}
We now return to the result by Isbell (in dimension $\leq 2$) and Mai and Tang (in higher dimensions, stating that any collapsible simplicial complex admits an injective metric. Two components of Isbell's argument hint to a deeper connection to (global) non-positive curvature in the sense of Alexandrov:
\begin{itemize}
	\item Gluing a pair of injective spaces along a point results in an injective space (providing yet another way to verify that the geometric realization of a finite edge-weighted combinatorial tree is injective). This is, of course, a far cry from the result that gluing CAT(0) spaces along convex subspaces yields a CAT(0) space (see~\cite{Bridson_Haefliger-nonpositive_curvature}, Section II.11), leaving much to be desired in a combination theory for injective spaces.
	\item Isbell's construction of an injective metric on a collapsible 2-dimensional simplicial complex makes explicit use of combinatorial non-positive curvature conditions (see Definition~\ref{defn:cubing}) appearing in a refined decomposition of the complex into squares.
\end{itemize}

Let us now study Isbell's argument in more detail as we analyze the connection with non-positive curvature and introduce our own results.

\medskip
In \cite{Isbell-injective_envelope}, Isbell proves that a (finite) collapsible 2-dimensional cellular complex $X$ admits an injective metric by explicitly constructing a hyper-convex metric on $X$ as follows: taking a triangulation of $X$, he subdivides its triangles into squares so as to form what he calls a {\it collapsible cubical 2-complex}, $\Delta$. He then metrizes $X$ as a geometric realization of $\Delta$, having first realized each 2-cube as a copy of the unit cube in $(\RR^2,\Vert\cdot\Vert_\infty)$ and endowing the resulting 2-dimensional piecewise-$\ell_\infty$ polyhedron with the associated quotient metric. Mai and Tang's proof of Isbell's conjecture \cite{Mai_Tang-collapsible_is_injective} extends this construction to higher dimensions. The verification of injectivity then proceeds by verifying the intersection property for finite families of balls $\{B(p_i,r_i)\}$ stated in Theorem~\ref{injectivity criterion}, in two steps:
\begin{enumerate}
	\item Reduction to the case where all the $p_i$ are vertices of the cubical subdivision and all the radii $r_i$ are integers;  
	\item Applying the properties of a ``collapsible cubical 2-complex'' to verify the result. 
\end{enumerate}
Among Isbell's requirements of a collapsible cubical 2-complexes one immediately notices Gromov's `no-triangle' condition for non-positively curved cubical complexes. Indeed, upon closer inspection it becomes clear that the notion of a collapsible cubical 2-complex is exactly a 2-dimensional cubing in the language of modern geometric group theory. The analysis by Mai and Tang is far more opaque, because their argument proceeds by a rather technical induction argument on the dimension of the given polyhedron. As we demonstrate in this article, the modern outlook on non-positive curvature allows one to sweep the ``gory details'' under the rug of Sageev--Roller duality, leaving a neatly organized picture which is uniform in all dimensions. 

\medskip
We now proceed to outline our approach. Given a finite cubing, let its cubes be metrized as axis-parallel parallelopipeds in $(\RR^n,\Vert\cdot\Vert_\infty)$, where $n$ may vary. The edge lengths of the parallelopipeds may be chosen with some degree of freedom, subject to the gluing constraints of the complex. We call the resulting geodesic spaces {\it piecewise-$\ell_\infty$ cubings}. Our central result is:
\begin{theorem}\label{main theorem} Every finite piecewise-$\ell_\infty$ cubing is injective.
\end{theorem}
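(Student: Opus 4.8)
The plan is to verify the Aronszajn–Panitchpakdi hyper-convexity criterion (Theorem~\ref{injectivity criterion}) directly on a locally-finite piecewise-$\ell^\infty$ cubing $X$, following the two-step strategy that Isbell used in the two-dimensional case but organizing the combinatorics through the poc-set/median duality of cubings. So suppose we are given finitely many closed balls $B(p_i,r_i)$, $i=1,\dots,n$, with $r_i+r_j\geq \dist{p_i}{p_j}$ for all $i,j$; we must produce a common point.

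The first step is a \emph{discretization}: reduce to the case where every center $p_i$ is a vertex of $X$ and every radius $r_i$ lies in a common discrete set (the lattice generated by the edge-lengths, after subdividing if necessary so the edge-lengths are commensurable — or, more robustly, approximate and pass to a limit). For this I would use that in a single $\ell^\infty$ parallelopiped the nearest lattice point to an arbitrary point, together with the obvious rounding of radii, perturbs all pairwise data by a controlled amount while preserving (a slightly relaxed form of) the Helly-type inequalities; geodesic completeness (via Bridson's theorem on shapes, invoked in the excerpt) guarantees that a common point of the perturbed balls, in the limit as the mesh is refined, yields a common point of the original balls. The point of this step is purely to move the whole problem onto the vertex set $\vertset{X}$ with its combinatorial (graph/$\ell^1$-type) metric, where the median structure is available.

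The heart of the argument is the second step, done on the cube complex itself. Here I would use that the vertex set of a cubing is a median algebra (equivalently, the dual of a poc set of half-spaces $\half{}$, with walls $\wall{}$), and that the combinatorial metric between vertices counts the walls separating them. Given integer radii $r_i$ at vertices $p_i$ satisfying the compatibility inequalities, the candidate common point is built \emph{wall by wall}: for each wall $\wall{}$ one chooses which of its two half-spaces the solution should lie in, making the choice so as not to "overspend" the budget $r_i$ at any center — concretely, one keeps, for each $i$, a running count of walls already charged against $p_i$ and shows the greedy/median choice never forces this count past $r_i$. The pairwise inequalities $r_i+r_j\geq\dist{p_i}{p_j}$ are exactly what is needed to see that the set of "forced" half-spaces (those forced by being within distance $r_i$ of $p_i$ for the various $i$) is \emph{consistent} — pairwise nested-or-crossing in the poc set — and hence, by Zorn/Helly for poc sets (the finite intersection property of the dual), determines at least one vertex $q$; the flag (no-triangle) condition and simple-connectedness are what make this consistency automatic rather than an extra hypothesis. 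One then checks $\dist{p_i}{q}\leq r_i$ for every $i$ by counting the separating walls against the budget.

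The step I expect to be the main obstacle is making the discretization (Step 1) genuinely rigorous in the piecewise-$\ell^\infty$, variable-edge-length, merely locally-finite setting: unlike Isbell's uniform unit cubes, here the lattices in different cubes need not match up, the complex may be infinite-dimensional near infinity is excluded but still unboundedly high-dimensional, and the quotient ($\ell^\infty$) metric must be shown to interact well with rounding along geodesics that cross many cells. I anticipate handling this by first proving the discrete (median-algebra) statement cleanly, then treating a general piecewise-$\ell^\infty$ cubing as a Gromov–Hausdorff limit of nicer ones and invoking a closure property — which is, in effect, exactly the mechanism that upgrades Theorem~\ref{main theorem} to the Main Theorem about pointed Gromov–Hausdorff limits. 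Compactness/limit arguments for balls (using completeness and properness from local finiteness) then convert an approximate common point into an exact one.
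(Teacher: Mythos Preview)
Your overall architecture---reduce by Gromov--Hausdorff limits to the finite unit case and then work combinatorially on the vertex set---matches the paper's. The discretization you worry about is exactly the paper's approximation lemma (Proposition~\ref{prop:approximation lemma}) together with the Limit Lemma (Lemma~\ref{proof of limit lemma}), so that part is fine.

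The gap is in your Step~2. You write that ``the combinatorial metric between vertices counts the walls separating them'' and propose to verify $\dist{p_i}{q}\leq r_i$ ``by counting the separating walls against the budget.'' But wall-counting gives the piecewise-$\ell^1$ distance $\ellone{}$, not the piecewise-$\ell^\infty$ distance $\ellinfty{}$ you need: by Proposition~\ref{length of normal cube path}, $\ellinfty{}(U,W)$ equals the maximum cardinality of a \emph{nested} subset of $U\minus W$, not $\card{U\minus W}$. So a wall-by-wall greedy construction with a simple per-center counter does not control $\ellinfty{}$, because whether a newly crossed wall increases $\ellinfty{}(p_i,q)$ depends on its nesting relations with \emph{all} previously crossed walls, not just on how many there are.

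The paper resolves this with one clean lemma you are missing: $\ellinfty{}$-balls of integer radius in $(P^\circ,\ellinfty{})$ are \emph{convex} in $(P^\circ,\ellone{})$---equivalently, each such ball is an intersection of half-spaces (Lemma~\ref{ball-separation lemma}). The proof uses the normal-cube-path chain $a_1<\cdots<a_m$ to exhibit, for any $W$ outside the ball, a half-space $V(a_m)$ containing the whole ball but not $W$. Once you have this, there is no need to build $q$ wall by wall: the hypothesis $r_i+r_j\geq\ellinfty{}(p_i,p_j)$ plus geodesicity gives pairwise intersection of the balls, and the Helly theorem for median spaces (Theorem~\ref{Helly theorem}) applied to these $\ellone{}$-convex sets yields the common point immediately. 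Your ``forced half-spaces'' language is compatible with this, but the step you elide---why being in every forced half-space actually forces membership in the $\ellinfty{}$-ball---\emph{is} the ball-separation lemma, and it does not follow from the pairwise inequalities alone.
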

Thus, not only is it true that any finite combinatorial cubing is injectively metrizable (Mai and Tang \cite{Mai_Tang-collapsible_is_injective}), but, in fact, it carries a whole deformation space of injective metrics. Moreover, observing that the class of injective metric spaces is closed under pointed Gromov--Hausdorff limits (see lemma below) extends the scope of the above theorem to give the main result stated in the abstract, as a locally finite cubing may be exhausted by finite ones. One needs to exercise care, however, either to guarantee the completeness of the given cubing itself, or to pass to its completion (which, recall, is a necessary condition for injectivity).
\begin{lemma}[``Limit Lemma'']\label{limit lemma} A complete metric space arising as a pointed Gromov--Hausdorff limit of proper injective metric spaces is itself injective. 
\end{lemma}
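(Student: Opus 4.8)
The plan is to verify the hyper-convexity criterion of Theorem~\ref{injectivity criterion} directly in the limit space. Let $X$ be a complete metric space arising as the pointed Gromov--Hausdorff limit, with basepoint $x_\infty$, of proper injective spaces $(X_n,x_n)$, and fix a finite family of closed balls $\{\overline B(p_i,r_i)\}_{i=1}^N$ in $X$ with $r_i+r_j\geq d(p_i,p_j)$ for all $i,j$; the goal is to produce a point common to all of them.

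First I would pick a radius $R>\max_i\bigl(r_i+d(p_i,x_\infty)\bigr)+1$, large enough that the centers $p_i$, and in fact everything within distance $r_i+1$ of some $p_i$, lie in the ball $B_X(x_\infty,R)$. For a sequence $\varepsilon_k\downarrow 0$, pointed Gromov--Hausdorff convergence furnishes, for suitable indices $n_k$, an $\varepsilon_k$-isometry $\phi_k:B_X(x_\infty,R)\to B_{X_{n_k}}(x_{n_k},R)$ (a map distorting distances by less than $\varepsilon_k$) whose image is $\varepsilon_k$-dense and which takes $x_\infty$ to $x_{n_k}$. Put $p_i^{(k)}:=\phi_k(p_i)$. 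A one-line estimate shows the inflated balls $\overline B\bigl(p_i^{(k)},r_i+\varepsilon_k\bigr)$ still satisfy the Helly-type inequality of Theorem~\ref{injectivity criterion} in $X_{n_k}$, and since $X_{n_k}$ is injective that theorem provides a point $q_k\in X_{n_k}$ with $d\bigl(q_k,p_i^{(k)}\bigr)\leq r_i+\varepsilon_k$ for all $i$. The triangle inequality confines $q_k$ to $B_{X_{n_k}}(x_{n_k},R)$ once $\varepsilon_k<\tfrac12$, so $q_k$ is $\varepsilon_k$-close to $\phi_k(q_k')$ for some $q_k'\in B_X(x_\infty,R)$, and unwinding the distortion bounds yields $d(q_k',p_i)\leq r_i+3\varepsilon_k$ for every $i$.

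It remains to pass to the limit in $k$. Here I would use the standard fact that a complete pointed Gromov--Hausdorff limit of proper spaces is proper; in particular the closed ball $\overline B_X(x_\infty,R+1)$ is complete and totally bounded --- the latter since it is $\varepsilon$-approximated by the compact balls $\overline B_{X_n}(x_n,R+1)$ --- hence compact. The $q_k'$ all lie in this compact ball, so a subsequence converges to some $q\in X$, and letting $k\to\infty$ in $d(q_k',p_i)\leq r_i+3\varepsilon_k$ gives $d(q,p_i)\leq r_i$ for all $i$. Thus $q\in\bigcap_i\overline B(p_i,r_i)$, and Theorem~\ref{injectivity criterion} lets us conclude that $X$ is injective.

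Almost all of this is bookkeeping with pointed Gromov--Hausdorff convergence; the points requiring care are choosing $R$ large enough that the auxiliary points $q_k$ never leave the region where the approximate isometries are controlled, and tracking the accumulation of $\varepsilon_k$-errors through the composite ``push the $p_i$ into $X_{n_k}$, apply hyper-convexity there, pull the common point back.'' The only genuinely non-formal ingredient is the compactness used in the last paragraph: without properness of the $X_n$ the approximate common points $q_k'$ need not subconverge, which is exactly why the lemma is stated for proper — rather than merely complete — spaces. (Equivalently, one could finish by observing that $q\mapsto\max_i\bigl(d(q,p_i)-r_i\bigr)$ is continuous with bounded sublevel sets and infimum $\leq 0$, so properness forces the infimum to be attained, at a point of the desired intersection.)
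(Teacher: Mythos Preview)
Your proof is correct and follows essentially the same route as the paper's: push the centers into the approximating injective spaces via an $\varepsilon$-approximation, apply hyper-convexity there with radii inflated by $O(\varepsilon)$, pull the resulting point back, and then use compactness of a fixed closed ball in the limit to subconverge. The only cosmetic differences are that the paper phrases the transfer step using $\epsilon$-approximation \emph{relations} rather than $\varepsilon$-isometry \emph{maps}, and that you spell out explicitly why closed balls in the limit are compact (as a consequence of the $X_n$ being proper), whereas the paper simply invokes ``the compactness of $B_X(b,R)$'' without comment.
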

Recall that a metric space is said to be proper if closed bounded subsets thereof are compact.

\medskip
The same lemma plays a crucial role in our reduction of the general case to the case of finite {\em unit} cubings. Returning to $X$ being a finite $\ell_\infty$-cubing, we explain how to see $X$ as a pointed Gromov--Hausdorff limit of a sequence of the form $\left(\tfrac{1}{n}X^{(n)},v\right)_{n\in\NN}$ where $X^{(n)}$ is a cubical subdivision of $X$ obtained by cutting the cubes of $X$ in a grid-like fashion (parallel to their faces), with unit weights. Applying the limit lemma once again we see that it now suffices to prove one final lemma. 
\begin{proposition}\label{final lemma} Every finite unit piecewise-$\ell_\infty$ cubing is injective. 
\end{proposition}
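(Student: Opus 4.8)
The plan is to verify, through Theorem~\ref{injectivity criterion}, that a finite unit piecewise-$\ell^\infty$ cubing $(X,d_\infty)$ is hyper-convex, translating the ball configurations into the duality picture from the outset. Since $X$ is a cubing, the duality theory presents it as the cube complex dual to the poc set $P$ of its half-spaces --- closures of components of complements of mid-cubes, plus $\emptyset$ and $X$ --- and identifies $V(X)$ with the space of consistent orientations (ultrafilters) of $P$. Alongside $d_\infty$ put the \emph{cube-gallery} metric $d_\Gamma$ on $V(X)$, the path metric of the graph $\Gamma$ whose edges join pairs of vertices lying in a common cube. The technical core is a \textbf{distance formula}: for $u,v\in V(X)$,
\[
	d_\infty(u,v)\;=\;d_\Gamma(u,v)\;=\;\max\set{k}{\exists\;\mathfrak h_1\supsetneq\cdots\supsetneq\mathfrak h_k\ \text{in}\ P,\ u\in\mathfrak h_i\ \text{and}\ v\notin\mathfrak h_i\ \text{for all }i},
\]
so the $\ell^\infty$-length metric, restricted to vertices, is simultaneously the minimal-cube-gallery metric and the length of the longest \emph{nested} chain of half-spaces separating $u$ from $v$. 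Negating $d_\infty(u,v)\le r$ then shows that a ball, restricted to vertices, is an intersection of half-spaces: writing $\mathrm{ht}_{p}(\mathfrak h)$ for the length of the longest chain of half-spaces containing $p$ with greatest element $\mathfrak h$, and $\mathcal H_i:=\set{\mathfrak h\in P}{p_i\in\mathfrak h,\ \mathrm{ht}_{p_i}(\mathfrak h)\ge r_i+1}$, one gets $B(p_i,r_i)\cap V(X)=\bigcap_{\mathfrak h\in\mathcal H_i}\mathfrak h$ (the empty intersection meaning all of $V(X)$).

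Given the distance formula, the discrete Helly property is nearly formal. Let $p_1,\dots,p_n\in V(X)$ and nonnegative integers $r_1,\dots,r_n$ satisfy $r_i+r_j\ge d_\infty(p_i,p_j)$ for all $i,j$. Following a minimal cube-gallery from $p_i$ to $p_j$ and stopping after $\min\{r_i,d_\infty(p_i,p_j)\}$ steps yields a vertex in $B(p_i,r_i)\cap B(p_j,r_j)$, so the vertex-balls meet pairwise. Now examine the family $\bigcup_i\mathcal H_i\subseteq P$: for $\mathfrak h\in\mathcal H_i$ and $\mathfrak h'\in\mathcal H_j$, any common vertex $q$ of $B(p_i,r_i)$ and $B(p_j,r_j)$ lies in $\bigcap\mathcal H_i\subseteq\mathfrak h$ and in $\bigcap\mathcal H_j\subseteq\mathfrak h'$, whence $\mathfrak h\cap\mathfrak h'\ne\emptyset$; so $\bigcup_i\mathcal H_i$ is pairwise-intersecting. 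A pairwise-intersecting (equivalently pairwise-compatible) family of half-spaces of a cubing always extends to an ultrafilter --- for finite $X$, process the complementary pairs of $P$ one at a time, using that at each stage at least one side of the current pair is compatible with the choices already made --- so some vertex $q^\ast$ belongs to every member of $\bigcup_i\mathcal H_i$, i.e.\ $q^\ast\in\bigcap_i\bigcap\mathcal H_i=\bigcap_i\bigl(B(p_i,r_i)\cap V(X)\bigr)$. This establishes the hyper-convexity criterion for balls with vertex centres and integer radii.

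Passing from arbitrary balls to that case is the Isbell-style approximation recalled as step~(1) in the Introduction: given balls with $r_i+r_j\ge d_\infty(p_i,p_j)$, reduce by continuity and compactness of the finite complex $X$ to rational radii, rescale and pass to an $m$-fold cubical subdivision $X^{(m)}$ --- again a finite unit cubing --- replace each $p_i$ by a nearest vertex of $X^{(m)}$ and each $r_i$ by a matching integer, apply the integer case inside $X^{(m)}$, and let $m\to\infty$, extracting a common point as a limit of the resulting common vertices by compactness (the mechanism behind Lemma~\ref{limit lemma}). The main obstacle throughout is the distance formula itself. Writing $M(u,v)$ for its right-hand side, the easy part is the chain of inequalities $M(u,v)\le d_\infty(u,v)\le d_\Gamma(u,v)$ --- the second because a cube-gallery yields a path of no greater $\ell^\infty$-length, the first because the walls of a nested separating chain are pairwise non-transverse, so no two of them lie in a common cube and hence each step of a gallery, and (with a little care) each unit of $\ell^\infty$-length of a path, crosses at most one of them. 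The substance is the remaining inequality $d_\Gamma(u,v)\le M(u,v)$, which closes the loop to equality: one must build, from the poc-set data, a cube-gallery of length $M(u,v)$ from $u$ to $v$, crossing at each step a maximal family of pairwise-transverse separating walls simultaneously inside a single cube. This is precisely where the cubing hypothesis is indispensable --- the flag condition on vertex links is what makes the cubes carrying those simultaneous crossings exist, and simple-connectedness is what allows the poc set of half-spaces to govern the metric in the first place.
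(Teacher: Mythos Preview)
Your proposal is correct and follows essentially the same route as the paper. Your distance formula $d_\infty=d_\Gamma=M$ is exactly Proposition~\ref{length of normal cube path} (your cube-gallery metric is the normal-cube-path length), your presentation of vertex-balls as intersections of half-spaces via the height function is the content of Lemma~\ref{ball-separation lemma}, your ultrafilter-extension step is the discrete form of the Helly theorem (Theorem~\ref{Helly theorem}) the paper invokes, and your passage from arbitrary centres and radii to vertices and integers via subdivision and a compactness limit is precisely the mechanism of Remark~\ref{reduction to integer radii} and the approximation lemma.
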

This is, essentially, the original result proved by Mai and Tang in \cite{Mai_Tang-collapsible_is_injective}, though in different language, and with some heavy lifting (the notion of a cubing was absent at the time). In this paper, we propose an alternative proof using the full power of the structure theory of cubings. Roughly speaking, the idea is to prove that the balls in the zero-skeleton $X^0$ of a unit piecewise-$\ell_\infty$ cubing $X$ are convex subsets of $X^0$ with respect to its unit piecewise-$\ell_1$ metric. Once this is known, the same property is inherited by all piecewise-$\ell_\infty$ cubings via the limit lemma. This finishes the proof of injectivity: as cubings are known to be geodesic median spaces (see below) with respect to their piecewise-$\ell_1$ metric, they satisfy a $1$-dimensional Helly theorem~ ---~ every finite collection of pairwise-intersecting convex sets has a common point. In particular, any collection of pairwise-intersecting $\ell_\infty$-balls in a cubing must have a common point, as desired.
 
\subsection{Remaining Questions} The class of complete piecewise-$\ell_\infty$ cubings does not coincide with the class of injective metric spaces. This follows directly from the limit lemma and the example below (see ex. \ref{example:three-fin} and fig. \ref{fig:three-fin}). It seems that neither does the slightly broader class of spaces arising as completions of locally finite piecewise-$\ell_\infty$ cubings (see discussion in Section~\ref{summability}). It would be interesting to quantify the discrepancy, perhaps in terms of the classification by Lang \cite{Lang-injective_envelopes_and_groups}.

\begin{figure}[t]
	\begin{center}
		\includegraphics[width=.6\columnwidth]{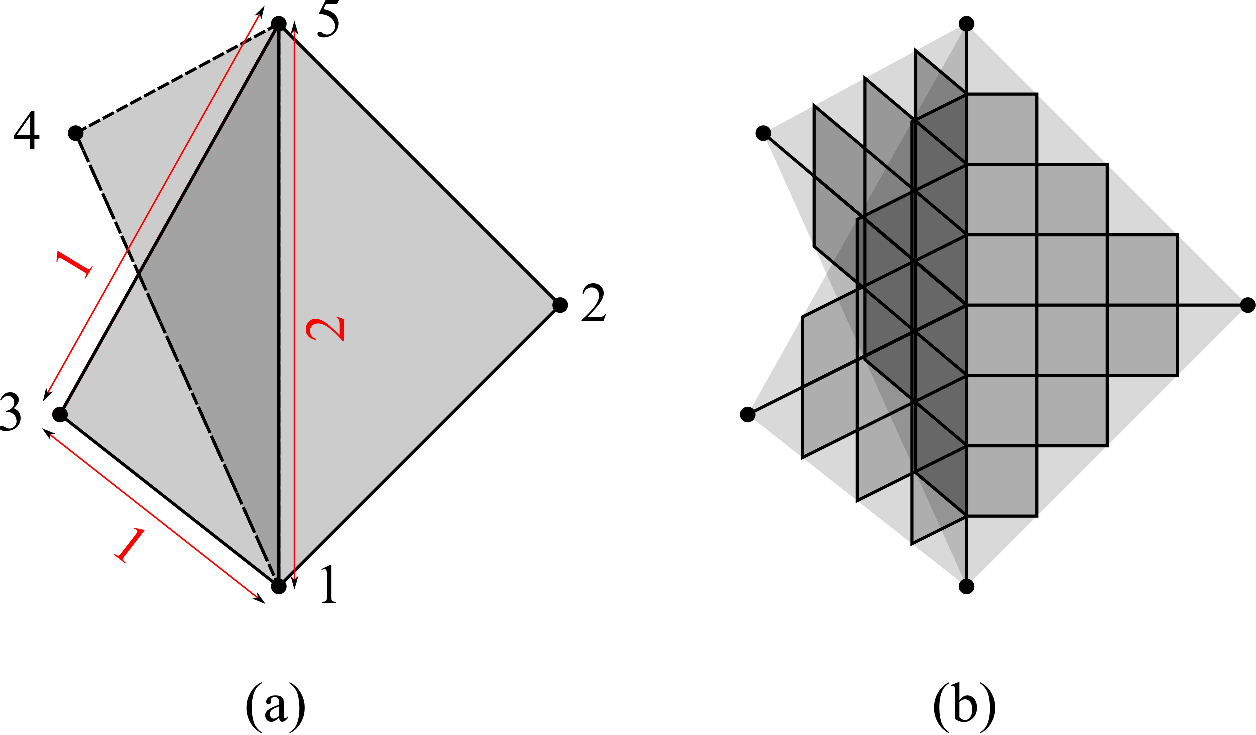}
	\end{center}
	\caption{The injective envelope of the 5-point space in Example~\ref{example:three-fin} (a) may be obtained as a limit of $\ell_\infty$-cubings of the form (b). We fondly refer to it as the ``3-fin''.
	\label{fig:three-fin}}
\end{figure}

\begin{example}[Injective Envelope of 5 Points]\label{example:three-fin} It is shown in \cite{Bandelt_Dress-cut_metrics} that the metric space $X=\{1,2,3,4,5\}$ with
\begin{eqnarray*}
	&&\dist{1}{2}=\dist{1}{3}=\dist{1}{4}=1,\\
	&&\dist{5}{2}=\dist{5}{3}=\dist{5}{4}=1,\\
	&&\dist{1}{5}=\dist{2}{3}=\dist{2}{4}=\dist{3}{4}=2,
\end{eqnarray*}
has the injective envelope depicted in Figure~\ref{fig:three-fin}(a), where one should think of the triple fin depicted there as the result of gluing three unit squares cut out from the $\ell_1$ plane and glued together to overlap along the (filled-in) triangles with sides $[1,5]$, $[1,x]$ and $[x,5]$ for $x\in\{2,3,4\}$. Each of these triangular fins is, in fact the limit of a sequence of piecewise-$\ell_\infty$ cubings, resulting in a sequence of approximations for $\epsilon X$ of the form shown in Figure~\ref{fig:three-fin}(b). 
\end{example}

While it is very possible that the class of limits of piecewise-$\ell_\infty$ cubings is still too narrow to exhaust all injective metric spaces, our results seem to suggest that the combinatorial structure of a cubing is nothing more than a set of explicit gluing instructions following which one could create a `big' injective space out of small, standardized pieces, namely: $\ell_\infty$-cubes. Thus, we would like to hope that our results are merely a glimpse of a combination theory for constructing `big' injective spaces out of `small'/`simple' ones. This motivates the following question. 
\begin{question} Is there a combination theory for injective metric spaces? If two injective spaces are glued along a convex (injective?) subspace, when is the resulting space injective?
\end{question}
A well-developed combination theory should simplify the proofs of the existing results as well as contribute to the understanding of the problem of characterizing injective spaces in constructive terms.

\section{Preliminaries}\label{section:preliminaries}
We use this section to recall some of the language required for the rigorous development of the main result. Some of the facts presented in this section seem to be common knowledge, yet new in the sense that they are not easily found in the literature --- we thought it better to include them here due to their elementary nature, as well as for the sake of providing a self-contained exposition.

\subsection{Piecewise-(your favorite geometry here) Cubed Complexes}\label{section:geometric complexes} Fix $p\in[1, \infty]$. The purpose of this section is to recall the necessary technical language for dealing with geometric cubed complexes modeled on $\ell_p$ geometry using the language and methods of \cite{Bridson_Haefliger-nonpositive_curvature}, Section I.7. 

A geometric cubical complex $X$ modeled on $\ell_p$ is obtained as a quotient of a disjoint union $\tilde X$ of a collection $\mathscr{S}$ of $\ell_p$ cubes. Each cube $S\in\mathscr{S}$ is a copy of $[0,1]^n\subset\RR^n$ for some $n\in\NN$, endowed with the metric $d_S$ induced by the $\ell_p$ norm. In complete analogy with simplicial complexes, $\mathscr{S}$ is required to include, for each cube $S\in\mathscr{S}$, all the faces of $S$ (which are also cubes). We endow $\tilde X$ with the metric $\tilde d(x,y)=d_S(x,y)$ if $x$ and $y$ share a cube in $\mathscr{S}$ and $d(x,y)=\infty$ otherwise. Subjecting $\tilde X$ to isometric (and hence also affine) identifications among some of its faces gives rise to a quotient map $\pi:\tilde X\to X$, with the restriction that $\pi\res{S}:S\to X$ is injective (compare with {\it loc.~cit.}, Definition 7.2). We will refer to such $X$ as {\em unit piecewise-$\ell_p$ cubical complexes}. The images $\pi(S)$, $S\in\mathscr{S}$ will be referred to as the {\em faces} of $X$, and we will write $\pi(S)<X$; since all identifications made are isometric, each $\pi(S)<X$ carries a well-defined metric, also denoted here by $d_S$, obtained by pushing $d_S$ forward along $\pi$. More generally, we allow a slight variation on this construction,~ ---~ called simply {\em piecewise-$\ell_p$ cubical complexes}~ ---~ which is achieved by putting non-negative real-valued weights on the coordinate axes of the individual cubes. One needs to make sure that the weights match, in the sense that any two cubes sharing a common face in $X$ do have their axes weighted in a way that keeps any pair of identified faces isometric to each other via the specified identification maps.

\medskip
We endow all such $X$ with the {\em quotient pseudo-metric}, which, in this situation may be described as follows (compare with {\it loc.~cit.,} Definition I.5.19):
\begin{definition}[Quotient Pseudo-Metric on a piecewise-$\ell_p$ cube complex]\label{quotient pseudo-metric} Let $\tilde X$, $X$ and $\pi$ be as above. Then the {\em quotient distance $d(x,y)$ between $x,y\in X$} is defined as the greatest lower bound on expressions of the form
\begin{equation}
	\sum_{i=0}^n \tilde d(x_i,y_i)
\end{equation}
where $x_0\in \pi\inv(x)$, $y_n\in \pi\inv(y)$ and $\pi(y_{i-1})=\pi(x_i)$ for all $i=1,\ldots,n$.
\end{definition}
As we shall see in Section~\ref{section:weighted cubings}, Roller's duality theory between cubings and discrete poc sets is ideally suited for the purpose of maintaining all the necessary records.

\medskip
One can further adapt the construction of the quotient pseudo-metric on a piecewise-$\ell_p$ cubical complex to its rather specialized strucure (compare {\it loc.~cit.}, Definition I.7.4):
\begin{definition}[strings, length] An {\em $m$-string from $x$ to $y$} in a piecewise-$\ell_p$ cubical complex $X$ is a sequence of points $\mbf{p}=(x_0,\ldots,x_m)$ where $x_0=x$, $x_m=y$ and every consecutive pair of points $x_{i-1},x_i$ is contained in a common face $S_i<X$. The {\it length} of $\mbf{p}$ is defined to be:
\begin{equation}
	\length{\mbf{p}}:=\sum_{i=1}^n d_{S_i}(x_{i-1},x_i)\,.
\end{equation}
A {\em string} is an $m$-string for some $m\in\NN$. The set of all strings from $x$ to $y$ in $X$ will be denoted by $\paths{x,y}$. 
\end{definition}
Since each face of $X$ is a geodesic metric space, it is easy to verify that the quotient pseudo-metric on $X$ between two points coincides with the greatest lower bound on the length of a string joining them ({\it verbatim} repetition of {\it loc.~cit.}, Lemma I.7.5). Moreover, there are some obvious optimizations to this picture.
\begin{definition}[taut strings]
A string $\mbf{p}$ in a piecewise-$\ell_p$ cubical complex is said to be {\it taut} if no consecutive triple of points along $\mbf{p}$ is contained in a cube.
\end{definition}
Since the individual cubes in $\tilde X$ are geodesic metric spaces, tightening a string locally will never increase its length. This leads to the following formula for the quotient pseudo-metric:
\begin{equation}\label{ell-p formula}
	d(x,y)=\inf\set{\length{\mbf{p}}}{\mbf{p}\in\paths{x,y}\text{ is taut}}.
\end{equation} 
\begin{figure}[t]
	\begin{center}
		\includegraphics[width=.6\textwidth]{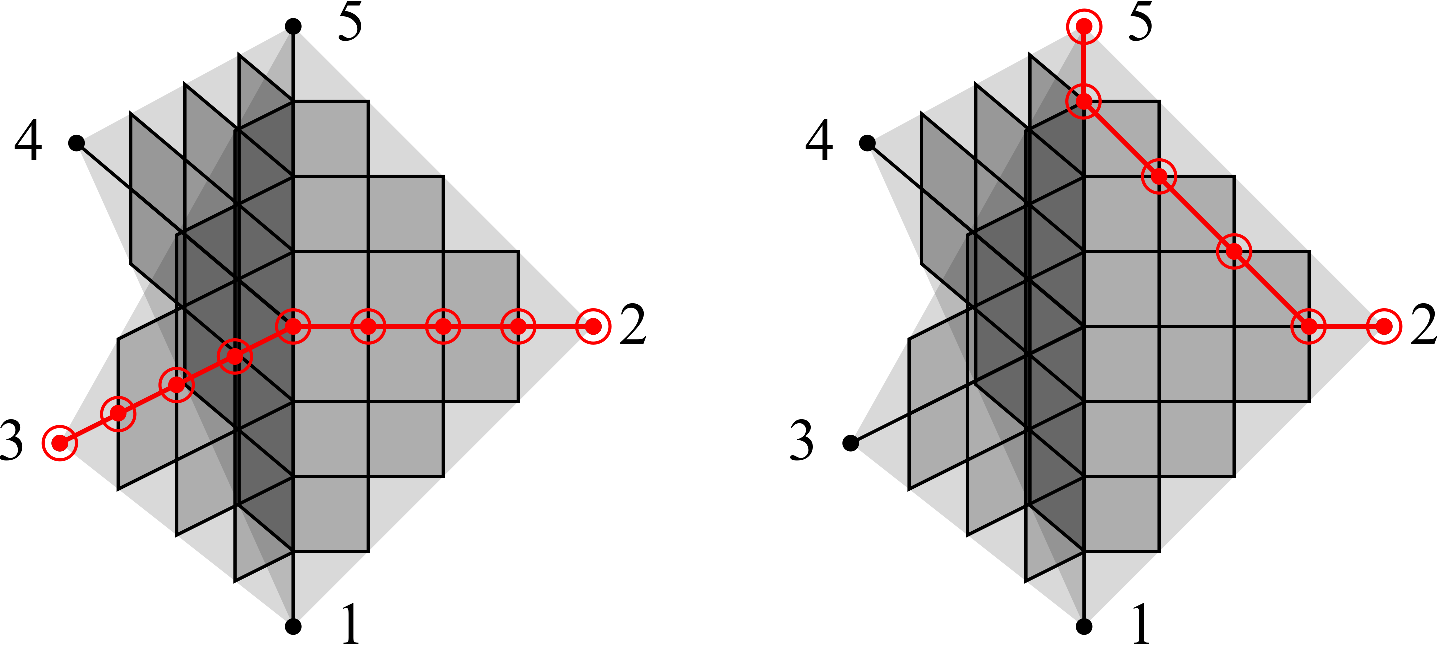}
	\end{center}
	\caption{Piecewise-$\ell_\infty$ cubical approximation of the $3$-fin (see Examples~\ref{example:three-fin},\ref{example:three-fin-paths}), the five-point space $X=\{1,2,3,4,5\}$ whose injective envelope is drawn in gray. In red are minimum length taut strings joining the points $2$ and $3$ (left), and $2$ and $5$ (right), where all weight values are equal to $\tfrac{1}{4}$.\label{fig:three-fin-paths}}
\end{figure}
Let us illustrate these definitions by taking the reader back to the $3$-fin, the injective envelope of the $5$-point metric space introduced in Example~\ref{example:three-fin}:
\begin{example}\label{example:three-fin-paths} If we metrize each cube in Figure~\ref{fig:three-fin}(b) as an axes-aligned cube in $\ell_{\infty}$ and weight the walls by $\tfrac{1}{4}$, we can calculate the distances in the resulting piecewise-$\ell_{\infty}$ cubing by finding an optimal taut string. Figure~\ref{fig:three-fin-paths} illustrates two types of such strings (from $2$ to $3$ and from $2$ to $5$) which produce the approximations $\dist{2}{3} = 2$ and $\dist{2}{5} = \frac{5}{4}$ of the original metric. Imagining finer cubical `approximations' of the 3-fin in the same spirit, we can see that, as the weights on the cubing tend to zero (while the number of cubes grows), the mapping of the five point metric space in Example~\ref{example:three-fin} to the approximating piecewise $\ell_\infty$ cubing approaches an isometric embeddeding into the limit space. 
\end{example}
The reduction of distances to lengths of taut strings raises the question of how easy it is to compute, say, geodesic paths for piecewise-$\ell_p$ cubings. We postpone this discussion until we have a better way of representing piecewise-$\ell_p$ cubings, in Section~\ref{section:weighted cubings}, and return to more basic questions: How far are we from knowing whether or not $X$ is a geodesic metric space? Whether or not $X$ is complete?

A necessary step along the way is to verify that the quotient pseudo-metric $d$ is, in fact, a metric on $X$, excluding some pathologies. Altering \cite{Bridson_Haefliger-nonpositive_curvature}, Definition I.7.8 slightly one defines, for each $x\in X$, the quantity
\begin{displaymath}
	\epsilon(x):=\inf_{x\in S<X}\epsilon(x,S)\,,\text{ where }
	\epsilon(x,S):=\inf\{d_S(x,T)\,|\,T<X, T\subset S, x\notin T\}\,.
\end{displaymath}
Then, the reasoning of {\it loc.~cit.}, I.7.9-13 applies {\it verbatim} (including the examples of pathologies) leading to two conclusions:
\begin{enumerate}
	\item If $\epsilon(x)>0$ for all $x\in X$, then $d$ is a length metric on $X$ ({\it loc. cit.}, Corollary I.7.10);
	\item If $X$ has only finitely many isometry classes of faces, then $(X,d)$ is a complete length space ({\it loc. cit.}, Theorem I.7.13).
\end{enumerate}
These results make it possible to immediately apply the Hopf--Rinow theorem ({\it loc.~cit.}, I.3.7) in the case of any finite piecewise-$\ell_p$ {\it cubical complex} to conclude that it is a complete geodesic metric space. An extension of this result to the case of a locally finite {\em unit} piecewise-$\ell_p$ {\it cubing} is then made possible, too, by constructing an exhaustion of the cubing by finite convex sub-complexes. Such an exhaustion is made possible by Theorem B.4 of~\cite{Leary}, which states that any finite sub-complex of a cubing is contained in a finite convex sub-complex. That theorem is a consequence of the properties of cubings as median spaces, and we will return to it in Corollary~\ref{finite convex hull}. For the more general case of a locally finite piecewise-$\ell_p$ cubing that is not necessarily unit, one gets to keep the existence of geodesics (because of there being an exhaustion by finite cubings), but not completeness (see Example~\ref{example:incomplete} in Section~\ref{summability}).

\subsection{Gromov--Hausdorff limits}\label{section:GH limits}
We refer the reader to Chapter I.5 of \cite{Bridson_Haefliger-nonpositive_curvature} for a detailed introduction and discussion of Gromov--Hausdorff convergence of proper metric spaces. Recall that a binary relation $R\subset X\times Y$ has {\it projections}
\begin{equation}
\begin{split}
	\pi_X(R)=\set{x\in X}{\exists y\in Y\;(x,y)\in R}\\
	\pi_Y(R)=\set{y\in Y}{\exists x\in X\;(x,y)\in R}
\end{split}
\end{equation}
and that a subset $Z$ of a metric space $X$ is said to be {\it  $\epsilon$-dense} (in $X$), if the collection of closed balls of radius $\epsilon$ about the points of $Z$ cover $X$. We will use the following convergence criterion for our technical work.
\begin{definition} Let $X,Y$ be metric spaces and let $\epsilon>0$. A relation $R\subseteq X\times Y$ is said to be an {\it $\epsilon$-approximation} between $X$ and $Y$, if:
\begin{enumerate}
	\item the projections $\pi_X(R)$ and $\pi_Y(R)$ are $\epsilon$-dense;
	\item $\left|\dist{x}{x'}-\dist{y}{y'}\right|<\epsilon$ holds for all $(x,y), (x',y') \in R$.
\end{enumerate}
An $\epsilon$-approximation is {\it surjective}, if $\pi_X(R)=X$ and $\pi_Y(R)=Y$.
\end{definition}
One of the ways to define the The Gromov--Hausdorff distance $GH(X,Y)$ is as (half, sometimes) the greatest lower bound of the set of $\epsilon>0$ admitting a surjective $\epsilon$-approximation between $X$ and $Y$. It is worth noting that estimation of Gromov--Hausdorff distances in the context of injective spaces and the use of $\epsilon$-approximations are not new to this field, for example: in \cite{Lang_Pavon_Zust-tight_spans} Lang, Pav\'on and Z\"ust estimate the change in Gromov--Hausdorff distance between spaces as one passes from the spaces to their injective envelopes. Thus, we expect that neither our limit lemma nor its proof below come as a surprise to the experts. We include the proof for convenience. 

\medskip
Using $\epsilon$-approximations, Gromov--Hausdorff convergence is characterized as follows.
\begin{lemma}\label{lemma:GH convergence} Let $X_n$, $n\in\NN$ and $X$ be metric spaces. The sequence $(X_n)_{n=1}^\infty$ converges to $X$ in the Gromov--Hausdorff topology, if for every $\epsilon>0$ there exist $N\in\NN$ such that $\epsilon$-approximations $R_n(\epsilon)\subset X_n\times X$ exist for all $n\geq N$.\ep
\end{lemma}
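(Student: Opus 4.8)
The plan is to upgrade each (a priori non-surjective) $\epsilon$-approximation supplied by the hypothesis to a \emph{surjective} $3\epsilon$-approximation --- the constant becomes $\tfrac32$ under the ``half'' normalization of $GH$ --- and then read the conclusion off the definition of the Gromov--Hausdorff distance recalled above, which is phrased in terms of surjective approximations. Concretely: fix $\eta>0$, choose $\epsilon>0$ with $3\epsilon<\eta$, and let $N$ and the $\epsilon$-approximations $R:=R_n\subseteq X_n\times X$ (for $n\ge N$, subscript suppressed from now on) be as in the hypothesis; it then suffices to prove $GH(X_n,X)\le 3\epsilon$ for each such $n$.

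First I would enlarge $R$. Using that $\pi_{X_n}(R)$ and $\pi_X(R)$ are $\epsilon$-dense, together with the axiom of choice, set
\begin{equation*}
R'\;:=\;R\ \cup\ \set{(a,\psi(a))}{a\in X_n\minus\pi_{X_n}(R)}\ \cup\ \set{(\theta(b),b)}{b\in X\minus\pi_X(R)},
\end{equation*}
where for $a\notin\pi_{X_n}(R)$ one chooses $\phi(a)\in\pi_{X_n}(R)$ with $\dist{a}{\phi(a)}\le\epsilon$ and then $\psi(a)\in X$ with $(\phi(a),\psi(a))\in R$, and symmetrically for $b\notin\pi_X(R)$ one chooses $\chi(b)\in\pi_X(R)$ with $\dist{b}{\chi(b)}\le\epsilon$ and then $\theta(b)\in X_n$ with $(\theta(b),\chi(b))\in R$. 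By construction $\pi_{X_n}(R')=X_n$ and $\pi_X(R')=X$, so $R'$ is surjective. The key bookkeeping observation is that every pair $(a,b)\in R'$ is \emph{flanked} by a pair $(\bar a,\bar b)\in R$ with $\dist{a}{\bar a}+\dist{b}{\bar b}\le\epsilon$: take $(\bar a,\bar b)=(a,b)$ when $(a,b)\in R$; take $(\bar a,\bar b)=(\phi(a),\psi(a))$ for a pair added on the $X_n$-side (so $\dist{b}{\bar b}=0$); and take $(\bar a,\bar b)=(\theta(b),\chi(b))$ for a pair added on the $X$-side (so $\dist{a}{\bar a}=0$).

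Given this, for any $(a,b),(a',b')\in R'$ with flanking pairs $(\bar a,\bar b),(\bar a',\bar b')\in R$, the triangle inequality (in the form $|\dist{u}{u'}-\dist{v}{v'}|\le\dist{u}{v}+\dist{u'}{v'}$) together with condition (2) in the definition of the $\epsilon$-approximation $R$ yields
\begin{equation*}
\left|\dist{a}{a'}-\dist{b}{b'}\right|\ \le\ \bigl(\dist{a}{\bar a}+\dist{b}{\bar b}\bigr)+\bigl(\dist{a'}{\bar a'}+\dist{b'}{\bar b'}\bigr)+\bigl|\dist{\bar a}{\bar a'}-\dist{\bar b}{\bar b'}\bigr|\ <\ 3\epsilon,
\end{equation*}
since the first two parenthesized terms are each $\le\epsilon$ by the flanking property and the last is $<\epsilon$ because $(\bar a,\bar b),(\bar a',\bar b')\in R$. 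Hence $R'$ is a surjective $3\epsilon$-approximation, so $GH(X_n,X)\le 3\epsilon<\eta$; as $\eta>0$ was arbitrary, $X_n\to X$ in the Gromov--Hausdorff topology.

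The only genuine obstacle is the one dealt with in the middle paragraphs --- passing from an $\epsilon$-approximation to a surjective one at the price of enlarging the constant by an absolute factor; the verifications that $R'$ is surjective and satisfies the displayed bound are routine, and were one to define $GH$ directly via (not necessarily surjective) $\epsilon$-approximations the statement would be essentially immediate.
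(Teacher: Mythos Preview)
Your argument is correct. The enlargement of $R$ to a surjective relation $R'$ via the flanking construction is sound, and the displayed estimate follows cleanly from the reverse triangle inequality together with the flanking bound $\dist{a}{\bar a}+\dist{b}{\bar b}\le\epsilon$ and condition~(2) for $R$.

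As for comparison with the paper: there is nothing to compare. The paper states this lemma with a terminal \ep\ and no proof, treating it as a standard reformulation of Gromov--Hausdorff convergence (with a reference to \cite{Bridson_Haefliger-nonpositive_curvature}, Chapter~I.5, for background). Your write-up supplies exactly the missing step --- upgrading a not-necessarily-surjective $\epsilon$-approximation to a surjective one at the cost of a fixed multiplicative constant --- which is the only content in the lemma beyond the definition. Your closing remark is apt: under the alternative convention where $GH$ is defined directly via (not necessarily surjective) $\epsilon$-approximations, the lemma is tautological, which is presumably why the authors omitted the argument.
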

This kind of convergence is most meaningful in the category of compact metric spaces. Pointed Gromov--Hausdorff convergence (of proper metric spaces) extends this idea and is defined as follows:
\begin{definition}[\cite{Bridson_Swarup-GH_convergence}, Definition 1.8] A sequence $(X_n,b_n)_{n\in\NN}$ of pointed proper metric spaces is said to converge to the pointed space $(X,b)$ if for every $R>0$ the sequence of closed metric balls $B_{X_n}(b_n,R)$, $n\in\NN$, converges to the closed ball $B_X(x,R)$ in the Gromov--Hausdorff topology. 
\end{definition}
Note that the completion of a pointed Gromov--Hausdorff limit of a sequence of spaces is itself a limit of the same sequence.

\medskip
We are ready to prove the limit lemma from the introduction.
\begin{lemma}\label{proof of limit lemma} Let $(X_n,b_n)$ be a sequence of pointed proper injective metric spaces converging to the space $(X,b)$ in the sense of pointed Gromov--Hausdorff convergence. Then the metric completion of $X$ is injective. 
\end{lemma}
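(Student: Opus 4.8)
The plan is to verify the hyper-convexity criterion (Theorem~\ref{injectivity criterion}) for the completion $\bar X$ directly, by pulling families of pairwise-compatible balls back along $\epsilon$-approximations to the injective spaces $X_n$, solving the problem there, and pushing the solutions forward. First I would reduce to a finite family $\{B_{\bar X}(p_i,r_i)\}_{i=1}^m$ with $r_i+r_j\geq\dist{p_i}{p_j}$; since $\bar X$ is the completion of $X$, it suffices by a routine approximation-and-diagonalization argument to treat the case where all $p_i\in X$, at the cost of enlarging the radii by an arbitrarily small $\delta>0$ (compatibility is preserved). Fixing $R$ large enough that all the balls in question are contained in $B_X(b,R)$, and invoking pointed Gromov--Hausdorff convergence, I obtain for each $\epsilon>0$ an index $n$ and an $\epsilon$-approximation $R_n\subset B_{X_n}(b_n,R)\times B_X(b,R)$.

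Next I would transport the data: for each $i$ pick $q_i\in X_n$ with $(q_i,p_i)\in R_n$, and set $\rho_i:=r_i+\delta+\epsilon$. Property~(2) of an $\epsilon$-approximation gives $\dist{q_i}{q_j}\le\dist{p_i}{p_j}+\epsilon\le r_i+r_j+\epsilon\le\rho_i+\rho_j$, so the enlarged balls $\{B_{X_n}(q_i,\rho_i)\}$ form a pairwise-compatible family in the injective space $X_n$. By Theorem~\ref{injectivity criterion} there is a common point $q\in X_n$; however $q$ need not lie in the domain $\pi_{X_n}(R_n)$, so I would first replace $q$ by a nearby point $q'\in\pi_{X_n}(R_n)$ using $\epsilon$-density (at the cost of another $\epsilon$ in the radii), then pick $p\in X$ with $(q',p)\in R_n$. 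Property~(2) again yields $\dist{p}{p_i}\le\dist{q'}{q_i}+\epsilon\le\rho_i+\epsilon+\epsilon$, so $p$ lies within $r_i+\delta+4\epsilon$ of each $p_i$. Letting $\epsilon\to 0$ along the sequence produces points $p^{(k)}\in X$ with $\dist{p^{(k)}}{p_i}\le r_i+\delta+\epsilon_k$ for all $i$; since these points all lie in the proper-in-the-limit ball $B_X(b,R)$... actually $X$ need only be a Gromov--Hausdorff limit, so I instead argue that $(p^{(k)})$ is \emph{almost} Cauchy: any two of them are common-ish centers, but to get an honest Cauchy sequence I would re-run the construction so that each $p^{(k+1)}$ is chosen within the intersection of the balls of radius $r_i+\delta+2^{-k}$, which forces $\dist{p^{(k)}}{p^{(k+1)}}$ small via a Helly/uniqueness-type estimate; the limit $p_\infty\in\bar X$ then satisfies $\dist{p_\infty}{p_i}\le r_i+\delta$. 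Finally, letting $\delta\to 0$ and using completeness of $\bar X$ one more time (diagonalize over $\delta=2^{-\ell}$, observing the resulting centers converge), I obtain a genuine common point of the original balls, so $\bar X$ is hyper-convex, hence injective.

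The main obstacle I anticipate is precisely this \emph{double limiting process}: the common point produced in each $X_n$ is only an approximate common point of the pulled-back balls in $X$, and a priori different $n$ give unrelated approximate solutions living in a space that is merely a Gromov--Hausdorff limit (so one cannot naively extract a convergent subsequence). The fix is to control the \emph{rate} at which the approximate centers move, using that in an injective --- equivalently hyper-convex --- space the common point of a compatible ball family can be chosen to depend on the family in a $1$-Lipschitz-like fashion (shrinking all radii by $\eta$ moves an admissible center by at most $O(\eta)$); this is where the quantitative form of Theorem~\ref{injectivity criterion} does the real work, and it is the step I would write out most carefully. Everything else --- the reduction to finite families with centers in $X$, the bookkeeping of the additive errors $\delta,\epsilon$, and the final appeal to completeness of $\bar X$ --- is routine.
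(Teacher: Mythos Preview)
Your transport step---pulling the compatible family back along an $\epsilon$-approximation to an injective $X_n$, solving there, and pushing the approximate center forward---is exactly what the paper does, with the same bookkeeping of additive errors. The divergence, and the genuine gap, is in the final limiting step.

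You explicitly back away from the correct idea when you write ``since these points all lie in the proper-in-the-limit ball $B_X(b,R)$\ldots actually $X$ need only be a Gromov--Hausdorff limit.'' But properness \emph{is} inherited: each $B_{X_n}(b_n,R)$ is compact because $X_n$ is proper, and a Gromov--Hausdorff limit of compact spaces is totally bounded, hence compact once you pass to the completion $\bar X$. So $B_{\bar X}(b,R)$ is compact, your approximate centers $p^{(k)}$ all sit inside it, and a convergent subsequence gives the common point directly. This is precisely how the paper finishes: one sentence invoking compactness of $B_X(b,R)$.

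By contrast, your proposed workaround---forcing $(p^{(k)})$ to be Cauchy via a ``$1$-Lipschitz-like'' selection of common points in hyper-convex spaces---is both unnecessary and not obviously available. There is no canonical choice of common point for a compatible ball family in an injective space, and the assertion that shrinking all radii by $\eta$ moves ``an admissible center'' by $O(\eta)$ has no meaning without first specifying a selection rule and then proving it has this continuity property; that is a substantial side project, not a routine step. Drop it and use compactness.
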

\begin{proof} We may assume $X$ is complete. We first make an independent observation: let $X,Y$ be metric spaces with $X$ injective and suppose that $R\subset X\times Y$ is an $\epsilon$-approximation. Given a collection $\{y_i\}_{i=1}^k$ of points in $Y$ and positive numbers $\{r_i\}_{i=1}^k$ satisfying
\begin{equation}\label{eqn:hypermetric}
	r_i+r_j\geq\dist{y_i}{y_j}
\end{equation}
for all $i,j\in\{1,\ldots,k\}$. We find points $x_i\in X$, $i=1,\ldots,k$, such that $(x_i,y_i)\in R$ and we set $R_i=r_i+\epsilon$. It then follows that 
\begin{equation}
	R_i+R_j\geq\dist{x_i}{x_j}
\end{equation}
for all $i,j$. Applying Theorem \ref{injectivity criterion} we find a point $x\in X$ with $\dist{x}{x_i}\leq R_i$ for all $i$, and conclude the existence of a point $y\in Y$ which then must satisfy $\dist{y}{y_i}\leq r_i+2\epsilon$ for all $i$.

\medskip
Now we apply the preceding observation to a fixed sequence of pointed injective spaces $(X_n,b_n)$ converging to a space $(X,b)$. Given finite collections $\{x_i\}_{i=1}^k$ of points in $X$ and $\{r_i\}_{i=1}^k$ of positive reals satisfying \eqref{eqn:hypermetric}, find $R>0$ large enough so that $B_X(b,R)$ contains the union of all the balls $B_X(x_i,r_i+2)$, $i=1,\ldots,k$. By the preceding argument, for every $\epsilon\in(0,1)$, the closed ball $B_X(b,R)$ contains a point $x_\epsilon$ satisfying $\dist{x_\epsilon}{x_i}\leq r_i+2\epsilon$ for $i=1,\ldots,k$. The existence of a point $x\in\bigcap B_X(x_i,r_i)$ now follows from the compactness of $B_X(b,R)$.
\end{proof}
\begin{remark}\label{reduction to integer radii} Observe that a different choice of the $R_i$---any choice, in fact, of $r_i+\epsilon\leq R_i\leq r_i+K\epsilon$ for a value of $K$ that is fixed in advance---would do. This enables a weakening of the assumptions (on the $X_n$) under which the preceding argument remains intact. Instead of assuming injectivity of each $X_n$ one may assume that the Aronszajn-Panitchpakdi condition holds for: 
\begin{itemize}
	\item[-] Finite subsets of a fixed $\epsilon$-dense subset $A_n$ of $X_n$,
	\item[-] Radii restricted to, say, values in the set $\tfrac{1}{n}\NN$. 
\end{itemize}
The injectivity of $X$ would follow from these assumptions by the same argument as before.
\end{remark}

\subsection{Median Spaces}\label{section:median spaces} The hyper-convexity criterion of Aronszajn and Panitchpakdi cannot help but remind one of a similar phenomenon in the realm of median spaces. In this short review we follow the exposition of \cite{Chatterji_Drutu_Haglund-measured_spaces_with_walls}. Let $(X,d)$ be a pseudo-metric space. 
\begin{definition}[Intervals and Medians] For $x,y\in X$, the {\it interval} $I(x,y)$ is defined to be
\begin{displaymath}
	I(x,y)=\set{z\in X}{d(x,z)+d(z,y)=d(x,y)}.
\end{displaymath}

For $x,y,z\in X$, we say that a point $m\in X$ is a {\it median} of the triple $(x,y,z)$ if
\begin{displaymath}
	m\in M(x,y,z):=I(x,y)\cap I(x,z)\cap I(y,z).
\end{displaymath}
\end{definition}
By definition, $M(x,y,z)$ is independent of the ordering of $x,y,z$. We also recall the following definitions. 
\begin{definition}[Convexity and Half-spaces] A subset $K$ in a pseudo-metric space $(X,d)$ is {\it convex}, if $I(x,y)\subseteq K$ holds for every $x,y\in K$. $K$ is said to be a {\it half-space of $X$} if both $K$ and $X\minus K$ are convex subspaces of $X$.
\end{definition}
The notion of median points is significant for a large class of spaces.
\begin{definition}[Median Space] A pseudo-metric space is a {\it median space}, if $M(x,y,z)\neq\varnothing$ and $\diam{M(x,y,z)}=0$ for all $x,y,z\in X$. A map $f\brr X\to Y$ between median spaces is said to be a {\it median morphism} if $f(M(x,y,z))\subseteq M(f(x),f(y),f(z))$ for all $x,y,z\in X$.
\end{definition}
Recall that any pseudo-metric can be {\it Hausdorffified}, i.e. made into a metric: a pseudo-metric space $X$ gives rise to a metric space $\hau{X}$ by forming the quotient of $X$ by the equivalence relation $x\sim y\IFF d(x,y)=0$. It is clear then that the median structure on $X$ descends to the quotient.  
\begin{lemma} Suppose $(X,d)$ is a median pseudo-metric space. Then $\hau{X}$ is a median metric space where $M(x,y,z)$ is a singleton for all $x,y,z\in X$. When this happens, we write
\begin{displaymath}
	M(x,y,z)=\left\{\med_X(x,y,z)\right\}
\end{displaymath}
and say that $\med_X(x,y,z)$ is the {\it median point} of the triple $(x,y,z)$.
\end{lemma}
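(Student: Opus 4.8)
The plan is to show that the metric, the intervals and the median sets all descend cleanly along the quotient map $q\colon X\to\hau{X}$, and then to read off the claim from the hypotheses on $(X,d)$.

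First I would verify that $\hau{X}$ is genuinely a metric space. The assignment $\bar d([x],[y]):=d(x,y)$ is well defined: if $d(x,x')=d(y,y')=0$, then the triangle inequality gives $d(x,y)\leq d(x,x')+d(x',y')+d(y',y)=d(x',y')$ and symmetrically $d(x',y')\leq d(x,y)$. Symmetry and the triangle inequality for $\bar d$ are inherited from $d$, and $\bar d([x],[y])=0$ means precisely $x\sim y$, i.e. $[x]=[y]$.

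The key observation I would isolate next is a rigidity statement: if $d(w,w')=0$ then $d(a,w)=d(a,w')$ for every $a\in X$ (once more by the triangle inequality applied both ways). Consequently membership in an interval is an invariant of equivalence classes, and in fact $I([x],[y])=q\bigl(I(x,y)\bigr)$, since the defining equation $\bar d([x],[z])+\bar d([z],[y])=\bar d([x],[y])$ is literally $d(x,z)+d(z,y)=d(x,y)$. Intersecting the three intervals then yields $M([x],[y],[z])=q\bigl(M(x,y,z)\bigr)$: the inclusion $\supseteq$ is immediate (so $q$ is a median morphism), while for $\subseteq$ a class $[w]$ on the left has a representative in each of $I(x,y)$, $I(x,z)$, $I(y,z)$, and the rigidity observation forces $w$ itself to lie in all three, hence $w\in M(x,y,z)$.

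Finally I would conclude. Since $M(x,y,z)\neq\varnothing$ by hypothesis, $M([x],[y],[z])=q\bigl(M(x,y,z)\bigr)\neq\varnothing$; since $\diam{M(x,y,z)}=0$, any two points of $M(x,y,z)$ are $\sim$-equivalent, so $q\bigl(M(x,y,z)\bigr)$ is a single point. Thus $\hau{X}$ satisfies $M(x,y,z)\neq\varnothing$ and $\diam{M(x,y,z)}=0$ --- it is a median metric space --- and moreover its median sets are honest singletons, which justifies the notation $\med_X(x,y,z)$. The whole argument is elementary; the one step deserving a moment's care is the inclusion $M([x],[y],[z])\subseteq q\bigl(M(x,y,z)\bigr)$, which is exactly where the pseudometric-specific rigidity of zero-distance points is needed.
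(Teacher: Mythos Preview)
Your argument is correct and is precisely the natural verification one would write down; the paper itself offers no proof for this lemma, contenting itself with the remark preceding it that ``it is clear then that the median structure on $X$ descends to the quotient.'' Your write-up simply makes that descent explicit, and the one nontrivial point you flag---the inclusion $M([x],[y],[z])\subseteq q\bigl(M(x,y,z)\bigr)$ via the rigidity of zero-distance points---is exactly the step that justifies the word ``clear.''
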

We provide a few simple examples of median spaces to illustrate this structure. 
\begin{example}[The real line is median] It is easy to verify that $\RR$ is a complete  geodesic median space, with
\begin{equation}\label{real median}
	\med_\RR(x,y,z)=y\text{ whenever }x\leq y\leq z
\end{equation}  
\end{example}

\begin{example}[$\ell_1$-type normed spaces are geodesic median spaces]\label{std median ops} Suppose $(T,\mu)$ is a measure space. Then $X=L^1(T,\mu)$ is a median space and
\begin{equation}
	\med(x,y,z)=\left(\med_\RR(x(t),y(t),z(t))\right)_{t\in T}
\end{equation}
In the case when $T$ is finite, we see that $X$ is simply isometric to $\ell_1(T):=(\RR^T,\Vert\cdot\Vert_1)$, with medians computed coordinatewise. Also observe that the subset $\{0,1\}^T$ forms a median subspace, in which the median operation reduces to a (pointwise) majority vote.
\end{example}

\medskip
Two things make median spaces highly relevant to this paper: the first is the fact that every cubing can be metrized to become a median space (and in more than one way as we shall see in section \ref{section:weighted cubings}); the second is the following theorem we have already mentioned in the introduction.
\begin{theorem}[Helly Theorem, \cite{Roller-duality} Theorem $2.2$]\label{Helly theorem} Suppose $\hsm{C}=\{C_i\}_{i=1}^n$ is a family of convex subsets of a median space $X$. If any two elements of $\hsm{C}$ have a point in common, then all elements of $\hsm{C}$ have a point in common.
\end{theorem}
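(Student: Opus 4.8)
The plan is to argue by induction on the number $n$ of sets in the family $\hsm{C}=\{C_i\}_{i=1}^n$, with essentially all of the content living in the case $n=3$, where the median operation does the work. For $n\leq 2$ the assertion is vacuous or is exactly the hypothesis, so there is nothing to do.

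For $n=3$, suppose $C_1,C_2,C_3$ are convex and pairwise intersecting. First I would choose points $x_{12}\in C_1\cap C_2$, $x_{13}\in C_1\cap C_3$ and $x_{23}\in C_2\cap C_3$, and then pick a median point $m\in M(x_{12},x_{13},x_{23})$, which is nonempty since $X$ is a median space. By definition of the median, $m\in I(x_{12},x_{13})$; since $x_{12},x_{13}\in C_1$ and $C_1$ is convex (i.e. $I(x',y')\subseteq C_1$ whenever $x',y'\in C_1$), this forces $m\in C_1$. The two symmetric observations $m\in I(x_{12},x_{23})\subseteq C_2$ and $m\in I(x_{13},x_{23})\subseteq C_3$ then give $m\in C_1\cap C_2\cap C_3$, which settles this case.

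For the inductive step, let $n\geq 4$ and assume the statement for every family of $n-1$ convex sets. Given a pairwise-intersecting family $\{C_i\}_{i=1}^n$, I would replace the last two members by the single set $C':=C_{n-1}\cap C_n$, which is convex as an intersection of convex sets, and consider the family $\{C_1,\ldots,C_{n-2},C'\}$ of $n-1$ convex sets. To invoke the inductive hypothesis I must check that this reduced family is still pairwise intersecting; the only intersections that are not immediate are $C_i\cap C'=C_i\cap C_{n-1}\cap C_n$ for $i\leq n-2$, and these are nonempty by the \emph{already-established} case $n=3$ applied to the triple $C_i,C_{n-1},C_n$. Hence the reduced family has a common point, which is automatically a common point of all of $C_1,\ldots,C_n$, completing the induction.

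The argument has no genuine obstacle; the one place to be careful is precisely this reduction, where one must remember to use the $n=3$ conclusion (not merely the pairwise hypothesis) to verify that the reduced family remains pairwise intersecting. It is also worth noting that nothing above uses uniqueness of medians or even a Hausdorff metric: only the nonemptiness of $M(x,y,z)$ and the interval-convexity of the $C_i$ enter, so the same proof works verbatim in any median semi-metric space.
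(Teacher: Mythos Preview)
Your argument is correct and is, in fact, the standard proof of this Helly-type result for median spaces. The paper itself does not supply a proof of this theorem: it is quoted with attribution to Roller (\cite{Roller-duality}, Theorem~2.2) and used as a black box, so there is no in-paper proof to compare against. Your induction---with the $n=3$ case handled by taking a median of three pairwise-intersection points, and the reduction from $n$ to $n-1$ obtained by replacing $C_{n-1},C_n$ with their intersection while invoking the $n=3$ case to maintain pairwise intersections---is exactly the classical argument one finds in the literature on median algebras and median graphs. Your closing remark that only the nonemptiness of $M(x,y,z)$ and interval-convexity are used is also on point.
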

Half-spaces play a crucial role in the theory of median spaces, as described thoroughly in~\cite{Chatterji_Drutu_Haglund-measured_spaces_with_walls}. For example, given a convex subset $Y$ of a median space, the subset $Y$ can be recovered by considering half-spaces separating $Y$ from singletons in the complement (see Theorem 4.8 of \cite{Chatterji_Drutu_Haglund-measured_spaces_with_walls}). If the median space is complete and $Y$ is closed, we can also find sufficiently many separating closed half-spaces since points in the complement have positive distance from $Y$. We summarize this as follows. 
\begin{proposition}\label{halfspace presentation of convex sets}\label{convex in median is V} Let $X$ be a complete median space. Then every convex subset of $X$ is an intersection of half-spaces. Any closed convex subset of $X$ is an intersection of {\it closed} half-spaces.
\end{proposition}

As explained in the introduction, our strategy for verifying the injectivity of a piecewise-$\ell_\infty$ cubing $X$ is to show that closed balls in such a cubing are {\it convex with respect to the piecewise-$\ell_1$ metric on the same cubing}. By Helly's theorem it will suffice to demonstrate that any closed $\ell_\infty$-ball is the intersection of closed half-spaces of $X$ when $X$ is viewed as a median space (using its piecewise-$\ell_1$ metric). The convexity of $\ell_\infty$-balls will result from us presenting any such ball as the intersection of a suitably chosen family of (closed) $\ell_1$-halfspaces.

\bigskip
The halfspace structure of a median space is fundamental to our technique. In fact, we will make good use of an isometric median embedding $\rho_{_X}$ of a median space $X$ into an $L^1$ space, constructed as follows (Theorem 5.1 and Corollary 5.3 in \cite{Chatterji_Drutu_Haglund-measured_spaces_with_walls}):
\begin{itemize}
	\item{\bf Construction of a `Transverse Measure', $\mu$: } one starts by constructing the set $\hsm{W}(X)$ of all pairs of the form $\{h,X\minus h\}$ where $h$ ranges over the halfspaces of $X$; it then turns out that a natural $\sigma$-algebra $\hsm{B}(X)$ exists on $\hsm{W}(X)$, together with a measure $\mu$, such that $\mu\hsm{W}(x|y)=\dist{x}{y}$ for all $x,y\in X$, where $\hsm{W}(x|y)$ denotes the subset of all pairs $\{h,X\minus h\}\in\hsm{W}(X)$ satisfying $x\in h$ and $y\in X\minus h$.
	\item{\bf Embedding in $L^1(\mu)$: } fixing a base-point $b\in X$, map a point $x\in X$ to the function $\charf{\hsm{W}(x|b)}$. This map is a median-preserving isometric embedding.
\end{itemize}

\medskip
The case of locally-finite (or, equivalently, proper) cubings is well known (\cite{Sageev-thesis,Roller-duality,Chepoi-median_graphs,Niblo_Roller-property_T}) to fall under the purview of this construction while lending itself to analysis by simple combinatorial (rather than measure-theoretic) tools. We seek to capitalize on this fact in Section~\ref{section:geometry of cubings}, where we use this embedding, realized as a weighted counting measure (see, e.g. Lemma~\ref{lemma:rep map}), to study the geometry of cubings endowed with a piecewise-$\ell_\infty$ geometry.

\medskip
Finally, some quick observations regarding the ambiguous relationship between median spaces and injective spaces are in order. Analyzing Isbell's construction of injective envelopes it is easy to verify that, if $X$ is an injective space and $Y$ is a subspace of $X$ then the inclusion of $Y$ into $X$ extends to an embedding of the envelope $\epsilon Y$ in $X$. The same construction enables one to prove that the injective envelope of three points is either an interval (degenerate or non-degenerate) or a tripod---a space isometric to a metric tree with four vertices and three leaves standing in bijective correspondence with the initial triple of points. Thus, for any triple of points $x,y,z$ in an injective space $X$, the median set $M(x,y,z)$ is non-empty!

\medskip
While medians do exist in injective spaces, there is no guarantee of uniqueness of the median, as one can easily observe in the normed space $\ell_\infty(S)$ for any set $S$ of cardinality greater than $2$. In fact, even cases so simple as the case of four points demonstrate the complicated relationship between the two classes---consider the following example.
\begin{figure}[t]
	\begin{center}
		\includegraphics[width=.85\columnwidth]{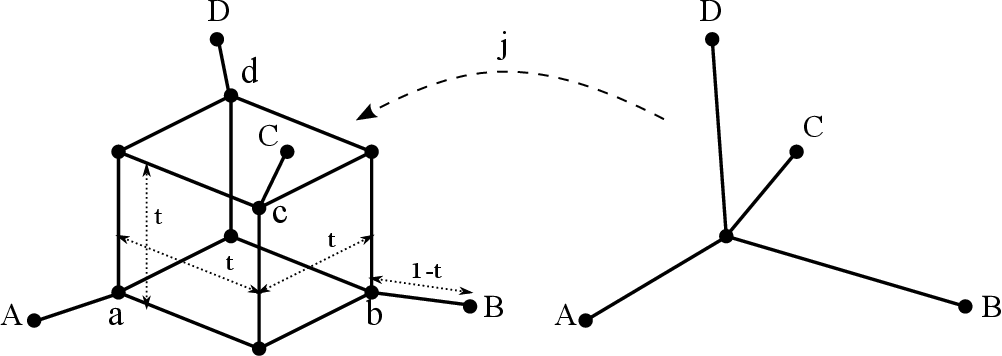}
	\end{center}
	\caption{The spaces $X_t$ (left) and $X_0$ (right) from Example~\ref{example:non-injective 1}. A median-preserving map from $X_0$ to $X_t$ is impossible.\label{fig:non-injective median space}}
\end{figure}
\begin{example}[Non-Injective Median Spaces]\label{example:non-injective 1} Let $t\in[0,1]$ be a real parameter, and let $Q_t$ denote the three-dimensional cube $[0,t]^3\subset\RR^3$, endowed with the $\ell_1$ metric. Denote:
\begin{displaymath}
	a=(0,0,0)\,,\;\;
	b=(t,t,0)\,,\;\;
	c=(t,0,t)\,,\;\;
	d=(0,t,t)
\end{displaymath}
and let $J_x$, $x\in\{a,b,c,d\}$ denote pairwise disjoint copies of the the interval $[0,1-t]\subset\RR$ endowed with the standard metric. We form a space $X_t$ as the quotient of $Q_t\cup\{J_a,J_b,J_c,J_d\}$ by identifying the point $0\in J_\sigma$ with the point $\sigma\in Q_t$ for every $\sigma\in\{a,b,c,d\}$, and endow $X_t$ with the quotient metric. Finally, we denote the point in $X_t$ corresponding to $1-t\in J_\sigma$ with the capital letter variant of $\sigma\in\{a,b,c,d\}$; see Figure~\ref{fig:non-injective median space}.

Deferring the proof that the $X_t$ are median spaces until Section~\ref{section:weighted cubings}, Example~\ref{example:non-injective 2}, let us focus on explaining why $X_t$ is not injective for any $t>0$. Observe that the subspace $Y=\{A,B,C,D\}\subset X_t$ satisfies $\dist{x}{y}=2$ for all distinct $x,y\in Y$. It follows that $X_0$ is the injective envelope of $Y$ with this metric (see example and discussion at the end of Section~\ref{injective envelopes}). If $X_t$ were injective for some $t>0$, the inclusion map $inc:Y\to X_t$ would have extended to an isometric embedding $j:X_0\to X_t$. Since both $X_0$ and $X_t$ are median spaces, $j$ is then a median-preserving map. Denoting the median map of $X_t$ by $\med_t$, we observe in $X_t$ that
\begin{eqnarray*}
	j\left(\med_0(A,B,C)\right)&=&\med_t(A,B,C)=c\,,\\
	j\left(\med_0(A,B,D)\right)&=&\med_t(A,B,D)=d,
\end{eqnarray*}
while at the same time in $X_0$ one has
\begin{displaymath}
	\med_0(A,B,C)=c=d=\med_0(A,B,D)\text{ in }X_0.
\end{displaymath}
Since $c\neq d$ in $X_t$ for $t>0$, we have arrived at a contradiction and we conclude $X_t$ cannot be injective for $t>0$.\ep
\end{example}
The last example hints that failure of injectivity is fairly common among median spaces. We find it intriguing, then, that a mere change of the geometry of the vector space on which the individual cubes are modeled should result in a change so radical as turning all the spaces in question into injective spaces. This kind of behaviour seems to hint at the existence of a general principle loosely formulated as {\it a non-positively curved combination of injective spaces is injective}. The extent to which such a statement is true remains to be verified.

\section{Piecewise-$\ell_1$ Cubings and their Geometry}\label{section:geometry of cubings}
For a very serious and inspiring recent account of the theory of non-positively curved cubical complexes please see \cite{Wise-riches_to_raags}. For a detailed account of their formal underpinnings as unit piecewise-$\ell_2$ cubings (of course, these are precisely the CAT(0) cubical complexes briefly reviewed in Section~\ref{NPC}), see  \cite{Bridson_Haefliger-nonpositive_curvature}, Section I.7 and~\cite{Leary}, Appendices A-C.

\medskip
Beyond these references, we will not delve into any detail regarding cubical complexes from the topological viewpoint. Instead we will focus on a very direct construction of cubings, due initially to Sageev \cite{Sageev-thesis}, and developed by Roller in \cite{Roller-duality} based on Isbell's duality \cite{Isbell-duality} between poc sets and median algebras. Let us just clarify what we mean by a cube, to avoid any confusion.
\begin{definition}\label{defn:standard cube} Let $S$ be a set. The {\it standard $S$-cube} is the set $\cub{S}$ of all functions from $S$ into $[0,1]$. When $S=\varnothing$, the {\it 0-dimensional cube} $\cub{S}$ is defined to equal the one-point set $\{\ast\}$. More generally, we say that $\cub{S}$ is a $\card{S}$-dimensional cube.
\end{definition}
Much of the material present in this section has been known to geometric groups theorists for quite some time, either formally or as folklore. Unfortunately, the literature on the technical tools we are using is in a state of slight disarray preventing immediate application to the specific problem at hand. We therefore decided to gather the necessary material here, filling in some of the gaps and formalizing the folklore.

\subsection{Poc Sets}\label{poc sets} We recall some definitions and examples.
\begin{definition}[Poc Set, \cite{Roller-duality}] A {\it poc set} is a poset $(P,\leq)$ with a minimum element denoted $0\in P$ and endowed with an order-reversing involution $a\mapsto a^\ast$ satisfying the additional requirement
\begin{displaymath}
	(\dagger)\quad
	a\leq a^\ast\THEN a=0.
\end{displaymath}
We say that $P$ is discrete, if the poset $(P,\leq)$ is discrete, that is: order intervals
\begin{equation}
	[a,b]=\set{x\in }{a\leq x\leq b}
\end{equation}
are finite for all $a,b\in P$.
\end{definition}
Working with poc sets requires some additional jargon.
\begin{definition} Let $P$ be a poc set. 
\begin{itemize}
	\item[-] Elements of $P$ are often referred to as {\it halfspaces};
	\item[-] The elements $0,0^\ast\in P$ are said to be {\it trivial};
	\item[-] The non-trivial halfspaces are said to be {\it proper};
	\item[-] A complementary pair $\{a,a^\ast\}$ of proper halfspaces is called a {\it wall} of $P$;
	\item[-] A pair $\{a,b\}$ with $a\notin\{b,b^\ast\}$ will be called a {\it proper pair}. 
\end{itemize}
\end{definition}
The meaning of $(\dagger)$ is summarized in the observation that any proper pair $a,b\in P$ satisfies {\it at most one of the following relations}:
\begin{equation}\label{nesting relations}
	a\leq b\,,\;
	a^\ast\leq b\,,\;
	a\leq b^\ast\,,\;
	a^\ast\leq b^\ast.
\end{equation}
The above relations (if they hold) are called {\it nesting relations}.
\begin{definition} Let $P$ be a poc set. A pair of elements $a,b\in P$ satisfying one of the relations \eqref{nesting relations} is said to be {\it nested}. More generally, a set $A\subset P$ is said to be nested if its elements are nested pairwise. The set $A$ is said to be {\it transverse} if no two of its elements are nested. A transverse pair $a,b\in P$ is often denoted with $a\pitchfork b$.
\end{definition}
\begin{example}[Power Set] The power set $\power{S}$ of a non-empty set $S$ is a poc set with respect to inclusion and complementation. In particular, the power set of one point, denoted $\power{}$, is the {\it trivial} poc set.
\end{example}
\begin{example}[Linear Poc Set]\label{example:linear poc set} Let $(P,\leq)$ be a totally ordered set. Then the set $Q=P\sqcup P^\ast\sqcup\{\minP,\maxP\}$ -- where $P^\ast$ is the set of symbols of the form $p^\ast$ for $p\in P$ -- and subject to the relations $\minP\leq x$ and $x\leq\maxP$ for all $x\in Q$, as well as the relations $p^\ast\leq q^\ast$ iff $q<p$ in $P$, is a poc set.
\end{example}
\begin{example}[Poc Set of a Tree]\label{example:poc set of a tree} Let $T$ be a tree with edge set $E$ and vertex set $V$. Let $P$ be the set of vertex-sets of connected components of all possible $T-e$, where $e$ ranges over $E$. Then $P$ is a discrete nested poc set with respect to inclusion and complementation.
\end{example}
Of course, one should also mention the morphisms in this category:
\begin{definition} A function $f:P\to Q$ between poc sets is a {\it poc morphism}, if
\begin{equation}
		a\leq b\THEN f(a)\leq f(b)\,,\quad f(a^\ast)=f(a)^\ast
\end{equation}
hold for all $a,b\in P$, and $f(\minP)=\minP$. The set of all poc morphisms from $P$ to $Q$ is denoted by $\mathrm{Hom}(P,Q)$.
\end{definition} 

\subsection{The Dual Cubing of a Poc Set}
The following construction is due to Sageev \cite{Sageev-thesis} in the specific context of relative ends of groups, and to Isbell \cite{Isbell-duality} and Roller \cite{Roller-duality} in the current level of generality.
\begin{definition}[Dual of a Poc Set]\label{dual} Let $P$ be a finite poc set. An {\it ultra-filter} $U$ on $P$ is a subset of $P$ satisfying:
\begin{enumerate}
	\item\label{ast-selection} for all $a\in P$ one has $a\in U$ or $a^\ast\in U$, but not both;
	\item\label{coherence} for all $a,b\in U$ one {\it never} has $a\leq b^\ast$.
\end{enumerate}
The set of all ultra-filters on $P$ is denoted by $P^\circ$. A subset $U\subset P$ satisfying \eqref{ast-selection} is called a {\it complete $\ast$-selection on $P$}. A subset $U\subset P$ satisfying \eqref{coherence} is said to be {\it coherent}. We will denote the set of all $\ast$-selections by $S(P)^0$, in anticipation of the full complex of $\ast$-selections, which will be defined below, and denoted by $S(P)$.
\end{definition}
\begin{remark} It is convenient to denote $A^\ast=\{a^\ast\,|\,a\in A\}$ for subsets $A$ of a poc set $P$. Thus, $U\subseteq P$ is a complete $\ast$-selection if and only if $U\cap U^\ast=\varnothing$ and $U\cup U^\ast=P$. A subset $U$ satisfying the first requirement (but possibly not the second) is called a {\em $\ast$-selection} on $P$.
\end{remark}
\begin{remark} Note that $\minP\notin U$ for all $U\in P^\circ$ because of coherence, and hence also $\minP^\ast\in U$, because $U$ is a complete $\ast$-selection.
\end{remark}
\begin{remark}\label{remark:dual maps} Also note that the mapping $f\mapsto f\inv(1)$ provides a natural identification of $\mathrm{Hom}(P,\power{})$ with $P^\circ$. An easy consequence is that any $f\in\mathrm{Hom}(P,Q)$ induces a mapping $f^\circ:Q^\circ\to P^\circ$, $U\mapsto f\inv(U)$, called {\em the dual of $f$}, which, under the above identification takes the form $\varphi\mapsto\varphi\circ f$, $\varphi\in\mathrm{Hom}(Q,\power{})$. Since $\varphi\circ f$ is a poc morphism (being the composition of two poc morphisms), the mapping $f^\circ$ is well-defined in the sense that $f\inv(U)\in P^\circ$ holds whenever $U\in Q^\circ$.
\end{remark}

\medskip
Picking a basepoint $B\in S(P)^0$ makes it easy to identify $S(P)^0$ with the vertex set of $\cub{B}$ (recall Definition~\ref{defn:standard cube}). The identification map $\rho:S(P)^0\to\cub{B}$ is defined by 
\begin{equation}\label{eqn:representation map}
	\rho:U\mapsto\charf{B\minus U}\,,
\end{equation}
and we need to verify some of its properties:
\begin{lemma}[Metric on $S(P)^0$]\label{lemma:rep map} The map $\rho$ of Equation \eqref{eqn:representation map} maps $S(P)^0$ bijectively onto $\power{B}$, the set of $\{0,1\}$-valued functions on $B$. Furthermore, the pullback metric on $S(P)^0$ defined by $\rho$,
\begin{displaymath}
	\ellone{}(U,V):=\Vert \rho(U)-\rho(V)\Vert_1\,,
\end{displaymath}
satisfies the identities:
\begin{equation}\label{ellone metric on a cubing}
	\ellone{}(U,V)=\card{U\minus V}=\card{V\minus U}=\tfrac{1}{2}\card{U\symplus V}
\end{equation}
(Compare with the map defined at the end of Section~\ref{section:median spaces}).
\end{lemma}
\begin{proof} Surjection is obvious. To prove injectivity, first observe that, for any $U,V\in S(P)^0$, one has $U\minus V=U\cap V^\ast$. In particular, given $B,U$ as above and $\eta=\rho(U)$, one first recovers $B\cap U^\ast$ as $\eta\inv(1)$, then $U\cap B^\ast$ as $(B\cap U^\ast)^\ast$ and $U\cap B$ as $B\minus(B\cap U^\ast)$. Finally, $U$ is recovered as $(B\cap U)\cup(B\cap U^\ast)$.

To prove the identities claimed for $\rho$, first observe that 
\begin{displaymath}
	U\minus V=U\cap V^\ast=(U^\ast\cap V)^\ast=(V\minus U)^\ast\,,
\end{displaymath}
which proves all but the first equality in \eqref{ellone metric on a cubing}. 
\begin{displaymath}
	\ellone{}(U,V)=\sum_{b\in B}|\charf{B\minus U}(b)-\charf{B\minus V}(b)|=\sum_{b\in B}|\charf{B\cap U^\ast}(b)-\charf{B\cap V^\ast}|\,.
\end{displaymath}
Each summand is $0$ or $1$, with a contribution of $1$ being made if and only if $b\in U^\ast\minus V^\ast$ or $b\in V^\ast\minus U^\ast$ (note that this is an exclusive `or'); equivalently, $b\in U^\ast\cap V$ or $b\in V^\ast\cap U$; equivalently, $b\in V\minus U$ or $b\in U\minus V$; equivalently, $b\in V\minus U$ or $b\in(V\minus U)^\ast$; equivalently, $b\in V\minus U$ or $b^\ast\in V\minus U$. Thus, the non-zero summands above are in one-to-one correspondence to the elements of $V\minus U$, finishing the proof.
\end{proof}

An edge of the cube $\cub{B}$ is a pair of $\{0,1\}$-valued functions on $B$ which differ in one point only. Pulling the edges of $\cub{B}$ back along $\rho$ endows $S(P)^0$ with the structure of a graph, denoted $S(P)^1$, where two vertices $U,V\in S(P)^0$ are joined by an edge if and only if $\ellone{}(U,V)=1$.
 
More generally, for any $A\subseteq B$ and any function $f:A\to\power{}$, the set of functions $\tilde f\in\cub{B}$ with $\tilde f\big|_A=f$ form a face of $\cub{B}$. The preimage of this set under $\rho$ coincides with the set of all complete $\ast$-selections $U\in S(P)^0$ containing the partial selection $(A\cap f\inv(1))\cup(A^\ast\cap f\inv(0))$, and may be thought of as the set of vertices in $S(P)^0$ spanning a face in a cubical complex, denoted $S(P)$, which is then isomrphic to $\cub{B}$. However, since $S(P)$ is completely determined by the metric $\ellone{}$, which is independent of the choice of $B$, so is $S(P)$ itself independent of the choice of $B$.


\begin{definition} Let $P$ be a discrete poc set. Then $\cube{}{P}$ is defined to be the sub-complex of $S(P)$ of all faces {\em not} incident on an incoherent vertex. Equivalently, it is the cubed sub-complex of $S(P)$ induced by $P^\circ$.
\end{definition}
A few simple examples are as follows. 
\begin{example}[Dual of an Orthogonal Poc Set] When the poc set $P$ has no non-trivial nesting relations it is clear that $\cube{}{P}$ coincides with the cube $S(P)$.
\end{example}
\begin{example}[Dual of a Linear Poc Set]\label{dual of linear} It is not hard to see that $\cube{}{P}$ for a linear poc set $P$ (see Example~\ref{example:linear poc set}) is the extension of $P$ by Dedekind cuts. 
\end{example}
\begin{example}[Dual of a Nested Poc Set]\label{dual of nested} It is a more involved computation to verify that the dual of a finite nested poc set $P$ is a finite tree\footnote{In fact, this case is also a particular case of Dunwoody's tree construction (see~\cite{Dicks_Dunwoody-groups_acting_on_graphs}, Theorem II.1.5), forming a tree from a nested system $E$ of subsets of a set $V$, that is: every $e,f\in E$ satisfy one of $e\subseteq f$, $e^\ast\subseteq f$, $e^\ast\subseteq f^\ast$ or $e\subseteq f^\ast$, where $e^\ast:=U\minus e$. If $E$ is contained in an almost-equality class of $\power{U}$ (that is, $e\symplus f$ is finite for all $e,f\in E$), one could think of each $e\in E$ as a directed edge of a tree $T$, where the support of $e$ itself is seen as the set of ``generalized vertices'' lying in the connected component of the head of $e$ in the two-component forest $T-e$.}. Moreover, $P$ is naturally isomorphic to the poc set constructed from this tree as described in example \ref{example:poc set of a tree}.
\end{example}

Finally, we give an example we will use later on.
\begin{example}[Cartesian Products]\label{Cartesian products} Suppose $P$ and $Q$ are discrete poc sets, and let $P\vee Q$ denote the poc set obtained from $P\cup Q$ by identifying $\minP_P$ with $\minP_Q$ (and hence also $\maxP_P$ with $\maxP_Q$). Then any proper element of $P$ is transverse to any proper element of $Q$ and it is easily shown that $\cube{}{P\vee Q}$ is naturally isomorphic to $\cube{}{P}\times\cube{}{Q}$ by employing the duals of the inclusion morphisms $P\to P\vee Q$ and $Q\to P\vee Q$ (see Remark~\ref{remark:dual maps}).
\end{example}

Summarizing all the above is the surprising result anticipated by Sageev and proved by Roller and Chepoi\footnote{In his thesis~\cite{Sageev-thesis}, aiming to study ends of group pairs ($G$ a finitely generated group, $H$ a finitely generated subgroup), Sageev constructed a cubing from a complement-closed collection of $G$-translates of certain right $H$-invariant subsets $G$ by forming a dual with respect to the containment order. Roller realized in~\cite{Roller-duality} the generality of this construction and its special place in the duality between median algebras and poc sets discovered by Isbell~\cite{Isbell-duality}. He studied this duality in depth, reformulating Sageev's work in terms of discrete poc sets, constructing the dual, $P^\circ$, of a discrete poc set $P$ as a Stone median algebra, each almost-equality class of which is a discrete median algebra (see {\it loc.~cit.}, Theorems 5.3, 6.4). Among other topologically flavored questions, Roller studied the question of when a discrete poc set $P$ may be naturally associated with a connected component $C$ of $\cube{}{P}$ whose standard set of half-spaces recovers $P$ ({\it loc.~cit.}, Section 9). This gave rise to the notion of the {\em Roller boundary} of an infinite cubing (first mentioned under this name in~\cite{Nevo_Sageev-poisson_bdry_for_cubings}), which is the residual set of the closure of $C$ in $\cube{}{P}$ with respect to the Tychonoff topology. The particular formulation in this paper, realizing $\cube{}{P}$ as a sub-complex of $S(P)$ produced by `excavating' the incoherent $\ast$-selections, offers a precise alternative to the construction one generally finds in the Geometric Group Theory literature, where one usually rigorously constructs the $1$-skeleton of $\cube{}{P}$ and then ``glues in cubes inductively so as to satisfy the flag condition''. Here, instead, we prefer an exposition closer to that of Chepoi (see \cite{Chepoi-median_graphs}, Theorem 6.1), who proves directly that the complexes arising as connected components of $\cube{}{P}$ (see Section~\ref{cubings are median}), are cubings. Chepoi's proof that such complexes are simply connected strengthens Sageev's ``disk-diagram'' argument, extending it over additional classes of complexes and endowing them with CAT(0) geometries~ ---~ see {\em loc.~cit.}, Section 7).}:
\begin{theorem}[Sageev--Roller, Chepoi]\label{Sageev-Roller duality} Let $P$ be a discrete poc set. Then every connected component of $\cube{}{P}$ is a cubing and $\ellone{}$ coincides with the combinatorial path metric on its $1$-dimensional skeleton. Conversely, every cubing $X$ is a connected component of $\cube{}{P}$ for an appropriately chosen discrete poc set $P$.
\end{theorem}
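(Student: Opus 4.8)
I would fix a base $\ast$-selection $B\in S(P)^0$ and use the injection $\rho$ of \eqref{eqn:representation map} to identify $S(P)$ with the standard cube $\cub{B}$, so that $\cube{}{P}$ becomes the subcomplex spanned by the coherent vertices; that this is a cubical complex is then automatic. The substance of the forward direction splits into three tasks: (i) show that the combinatorial path metric on the $1$-skeleton of $\cube{}{P}$ agrees with $\ellone{}$; (ii) verify that the link of every vertex is a flag simplicial complex; and (iii) show that each connected component of $\cube{}{P}$ is simply connected. The converse is then obtained by running the whole construction backwards, with $P$ taken to be the poc set of halfspaces of a given cubing, as explained below.

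For (i), I would first observe that two coherent vertices are joined by an edge of $\cube{}{P}$ precisely when their symmetric difference $U\symplus V$ is a single wall $\{a,a^\ast\}$, so that every edge-path changes $\ellone{}$ by exactly one and hence $\ellone{}\leq d_{\mathrm{path}}$. For the reverse inequality I would induct on $n=\ellone{}(U,V)$, which is finite on a component: the set $U\minus V=\set{a\in U}{a^\ast\in V}$ is finite, so it has a $\leq$-minimal element $a$; flipping $a$ gives $U':=(U\minus\{a\})\cup\{a^\ast\}$, and the crucial point is that $U'$ is again coherent. Indeed an incoherence $b\leq c^\ast$ inside $U'$ must (since $U$ is coherent) involve $a^\ast$; relabelling so that $b=a^\ast$, one gets $c\in U$ with $c\leq a$ and $c\neq a$, forcing $c\in V$ by minimality of $a$ in $U\minus V$, and then $c,a^\ast\in V$ with $c\leq(a^\ast)^\ast$ contradicts coherence of $V$. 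Since $U,U'$ are coherent and differ in one wall they are joined by an edge of $\cube{}{P}$ and $\ellone{}(U',V)=n-1$, so the induction closes; as a by-product one sees that the components of $\cube{}{P}$ are exactly its $\ellone{}$-bounded pieces, and that $\ellone{}$ is the intrinsic path metric on each.

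For (ii), I would describe the link of a coherent vertex $U$ concretely: its vertices are the proper elements of $U$ that are $\leq$-minimal in $U$ (these, and only these, may be flipped while preserving coherence, by the argument used in (i)), and a set $\{a_1,\dots,a_d\}$ of them spans a $d$-cube at $U$ exactly when the $a_i$ are pairwise transverse. The nontrivial implication is the flag condition: if the $a_i$ are pairwise transverse and each minimal in $U$, then flipping any subset of them yields a coherent ultra-filter --- an alleged violation forces either a nesting relation \eqref{nesting relations} between two of the $a_i$ (impossible by transversality), or a contradiction with the minimality of some $a_i$ in $U$, or a contradiction with the coherence of $U$. This produces the missing top-dimensional cell, so the link is flag and $\cube{}{P}$ is non-positively curved in the sense of Definition~\ref{defn:cubing}.

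Task (iii), simple connectivity, is the step I expect to be the real obstacle. The plan is the familiar hyperplane argument: each wall $\{a,a^\ast\}$ of $P$ determines a hyperplane $\wall{a}$ (the union of midcubes of its dual edges), and by the metric computation of (i) this hyperplane cuts a component into the two convex subcomplexes $\set{U}{a\in U}$ and $\set{U}{a^\ast\in U}$; given a nontrivial edge-loop of minimal length one chooses two consecutive crossings of some $\wall{a}$ and uses the ``majority vote'' median of three coherent ultra-filters to push the arc between the crossings across $\wall{a}$ through a band of squares, which strictly lowers the number of crossings and contradicts minimality. Should this combinatorial homotopy prove delicate, I would instead first verify that the coherent vertices of $\cube{}{P}$ form a median graph --- with median given by the coordinatewise majority --- and then invoke Chepoi's theorem \cite{Chepoi-median_graphs} identifying median graphs with $1$-skeleta of CAT(0) cube complexes. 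Finally, for the converse, given a cubing $X$ I would take hyperplanes to be the equivalence classes of edges under the transitive closure of ``being opposite sides of a square''; using non-positive curvature and simple connectivity of $X$ one shows (as in \cite{Sageev-thesis,Roller-duality}) that each hyperplane is embedded and $2$-sided and partitions $X^0$ into two convex halfspaces. Letting $P$ be the set of all these halfspaces together with $\varnothing=\minP$ and $X^0=\maxP$, ordered by inclusion with $h^\ast:=X^0\minus h$, one checks that $P$ is a discrete poc set --- $(\dagger)$ holds since $h\subseteq h^\ast$ forces $h=\varnothing$, and only finitely many hyperplanes separate any two vertices --- and that $v\mapsto\set{h\in P}{v\in h}$ carries $X^0$ isometrically (for $\ellone{}$) onto the vertex set of the component of $\cube{}{P}$ through a chosen basepoint, extending to a cubical isomorphism. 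Here too the crux is entirely the good behaviour of hyperplanes; everything else is bookkeeping with the nesting relations \eqref{nesting relations} and the axiom $(\dagger)$.
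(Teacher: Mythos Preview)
The paper does not give its own proof of this theorem: it is stated as a known result, attributed to Sageev, Roller and Chepoi, with the phrase ``Summarizing all the above is the surprising result anticipated by Sageev and proved by Roller and Chepoi'' and no argument beyond the two local lemmas on links and cubes (Lemmas~\ref{lemma:links and minimality} and~\ref{lemma:cubes and transversality}). So there is nothing to compare against; your proposal is a self-contained sketch of the standard proof from the cited sources.

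That said, your outline is essentially correct and follows the route of \cite{Sageev-thesis,Roller-duality,Chepoi-median_graphs}. Parts (i) and (ii) are exactly what the paper records as Lemmas~\ref{lemma:links and minimality} and~\ref{lemma:cubes and transversality}, and your arguments for them are clean. For (iii), the hyperplane-crossing homotopy you describe is the classical one, but the ``median of three ultra-filters pushes the arc through a band of squares'' step is vague as written --- the actual mechanism is to find, at a point where the loop crosses $\wall{a}$ twice in succession, two adjacent edges dual to $a$ and to a transverse $b$ and complete them to a square (this is where the flag condition from (ii) is used), then perform an elementary homotopy across that square; iterating reduces the crossing count. Your fallback via Chepoi's identification of median graphs with $1$-skeleta of CAT(0) cube complexes is the more robust route and is the one the paper itself implicitly leans on. The converse direction you sketch (hyperplanes of a cubing are embedded, two-sided, and separate $X^0$ into convex halves; the resulting halfspace system is a discrete poc set whose dual recovers $X$) is again the standard Sageev argument and is correctly summarized.
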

When $P$ is infinite, $\cube{}{P}$ inevitably forms a disconnected space. The connected components of $\cube{}{P}$ are spanned by the almost-equality classes of its vertices: recall that two subsets $U,W$ of a set $S$ are said to be {\it almost equal} if the symmetric difference $U\symplus W$ is finite; it follows from the theorem and from Lemma~\ref{lemma:rep map} that a pair of vertices in $\cube{}{P}$ is joined by a (finite) edge-path if and only if they are almost-equal to each other as subsets of $P$. For example, if $P$ is discrete and nested, $\cube{}{P}$ will consist of a tree whose poc set of halfspaces is naturally isomorphic to $P$, together with its space of ends, each end being the only point of its component in $\cube{}{P}$ (compare with~\cite{Dicks_Dunwoody-groups_acting_on_graphs}, Corollary II.1.10 for this characterization of trees and Paragraph IV.6.3 for the discussion of their ends). 

\medskip
In general, however, it may not be possible to {\it naturally} select a (distinguished) component $K$ of $\cube{}{P}$ whose poc set of half-spaces is isomorphic to $P$. For example, if $P$ is countably infinite and {\em orthogonal}~ ---~ that is, if no proper pair in $P$ is nested~ ---~ then $\cube{}{P}$ coincides with $S(P)$, and there is no preferred vertex. Section 9 in \cite{Roller-duality} and Section 3.1 in \cite{Guralnik-Roller_boundary} explains how to achieve this under the condition that $P$ contains no infinite transverse set.

\medskip
Seeking to avoid these technicalities we will use the trick of restricting attention to a component containing a particular vertex of interest.
\begin{definition}\label{picking a component} Let $P$ be a discrete poc set and $B\in P^\circ$. The connected component of the complex $\cube{}{P}$ containing the vertex $B$ will be denoted $\cube{}{P}_B$.
\end{definition}
Recall the (piecewise affinely extended) representation map $\rho \colon \cube{}{P}\to\RR^B$ defined in Equation~\ref{eqn:representation map}, and note that $\cube{}{P}_B$ is precisely the pre-image under $\rho$ of the vector subspace $\ell_1(B)$ of $\RR^B$ consisting of all vectors of finite $1$-norm. This is due to $P$ being discrete.

\subsection{Local Properties of Duals}\label{section:local props of duals} We will need some technical information about the local structure of $\cube{}{P}$. The following results are well-known and appear in \cite{Sageev-thesis,Roller-duality}, and provide the backbone for all the available variants of Theorem~\ref{Sageev-Roller duality}.
\begin{lemma}[vertex links in $\cube{}{P}$]\label{lemma:links and minimality} Let $P$ be a discrete poc set and let $V,V'\in P^\circ$ be vertices of $\cube{}{P}$. Then $V'$ is adjacent to $V$ in $\cube{}{P}$ if and only if $V'$ has the form
\begin{equation}\label{eqn:neighbour}
	V'=\flip{a}{V}:=(V-\{a\})\cup\{a^\ast\}
\end{equation}
for $a\in\min(V)$, where $\min(V)$ denotes the set of minimal elements of $V$ with respect to the ordering induced on it from $P$.
\end{lemma}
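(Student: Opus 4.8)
The plan is to reduce adjacency in $\cube{}{P}$ to the combinatorial condition $\ellone{}(V,V')=1$ and then to extract the description of neighbours directly from the two defining properties of an ultra-filter. The first step is the observation that, since $V$ and $V'$ both lie in $P^\circ$, they are coherent vertices; hence any edge of $S(P)$ joining them is a face not incident to an incoherent vertex, and therefore belongs to $\cube{}{P}$. Consequently, for vertices of $P^\circ$, being adjacent in $\cube{}{P}$ is the same as being joined by an edge of $S(P)$, which by the definition of the graph $S(P)^1$ means precisely $\ellone{}(V,V')=1$, equivalently $\card{V\symplus V'}=2$.

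For the forward direction, assume $\ellone{}(V,V')=1$ and let $a$ be the unique element of $V\minus V'$. Because $V'$ is a complete $\ast$-selection and $a\notin V'$, we have $a^\ast\in V'$; and $a^\ast\notin V$ since $a\in V$, so $a^\ast$ is the unique element of $V'\minus V$. This already yields $V'=(V\minus\{a\})\cup\{a^\ast\}=\flip{a}{V}$. It remains to see that $a\in\min(V)$: if some $b\in V$ had $b<a$, then $b\neq a^\ast$ (as $a,b\in V$ and $V$ is a $\ast$-selection), so $b\in V'$; but then $b$ and $a^\ast$ both lie in $V'$ with $b\leq a=(a^\ast)^\ast$, contradicting coherence of $V'$.

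For the converse, given $a\in\min(V)$ I would put $V'=\flip{a}{V}$ and check that $V'$ is a vertex of $\cube{}{P}$ adjacent to $V$. That $V'$ is a complete $\ast$-selection is immediate, since it differs from $V$ only by swapping the single wall $\{a,a^\ast\}$. Coherence of $V'$ is the step that needs care: if $x\leq y^\ast$ for some $x,y\in V'$, then the case $x,y\neq a^\ast$ contradicts coherence of $V$, while any case in which $x=a^\ast$ or $y=a^\ast$ produces an element of $V\minus\{a\}$ lying strictly below $a$, contradicting the minimality of $a$ in $V$; the degenerate subcase $x=y=a^\ast$ would force $a^\ast\leq a$, hence $a=\minP$ by $(\dagger)$, which is impossible since $\minP$ lies in no ultra-filter. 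Thus $V'\in P^\circ$, and since $V\symplus V'=\{a,a^\ast\}$ we have $\ellone{}(V,V')=1$, so by the first paragraph $V'$ is adjacent to $V$ in $\cube{}{P}$.

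The whole argument is essentially bookkeeping with the definitions of $\ast$-selection and coherence. The only genuinely delicate point is the coherence check in the converse direction: one must confirm that every way in which the newly introduced halfspace $a^\ast$ could clash with a surviving element of $V$ is ruled out exactly by $a$ being minimal in $V$. This is precisely where the hypothesis $a\in\min(V)$ is used, and it is the natural place for an oversight to slip in — for instance, forgetting the possibility $y=a^\ast$ in the inequality $x\leq y^\ast$.
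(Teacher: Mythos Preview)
The paper does not actually prove this lemma: it is stated as ``well-known'' with a citation to \cite{Sageev-thesis,Roller-duality}, so there is no in-paper argument to compare against. Your proof is the standard elementary verification and is correct in substance.

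There is one small slip in the degenerate subcase $x=y=a^\ast$ of the coherence check. From $a^\ast\leq a=(a^\ast)^\ast$ the axiom $(\dagger)$ (applied with $a^\ast$ in the role of the variable) yields $a^\ast=\minP$, i.e.\ $a=\maxP$, not $a=\minP$ as you wrote. Your stated reason for impossibility (``$\minP$ lies in no ultra-filter'') therefore does not apply directly. The correct way to finish is to note that $\maxP$ is the maximum of $P$ and hence lies strictly above every proper element of $V$; thus $\maxP\in\min(V)$ forces $V=\{\maxP\}$, which occurs only for the trivial poc set $\{\minP,\maxP\}$, where $P^\circ$ is a single point and the lemma is vacuous. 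With this correction your argument is complete.
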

The operation of replacing $V$ by $\flip{a}{V}$ will be called a {\it flip}. Clearly, any vertex of $\cube{}{P}_B$ (for any fixed choice of $B\in P^\circ$) is connected to any other by a finite sequence of flips. The fact that $\ellone{}(U,V)$ simply measures the minimal number of flips required to to turn $U$ into $V$ is central to the theory of discrete median algebras.

\medskip
A special situation occurs when several flips may be applied to a vertex in different orders of application without affecting the outcome. It is easy to verify that the hyperplanes being flipped form a transverse set in such a case, and, more generally we have the following lemma.
\begin{lemma}[cubes in $\cube{}{P}$]\label{lemma:cubes and transversality} Let $P$ be a discrete poc set and let $V\in P^\circ$ be a vertex in $\cube{}{P}$. Then the set of cubes of $\cube{}{P}$ incident to $V$ is in one-to-one correspondence with transverse sets $A\subset\min(V)$. Moreover, for each such $A$, the vertices of the corresponding cube adjacent to $V$ are all of the form
\begin{equation}\label{eqn:vertices of a cube}
	\flip{a_1,\ldots,a_m}{V}:=\flip{a_m}{\cdots\flip{a_2}{\flip{a_1}{V}}\cdots}
\end{equation}
where $(a_1,\ldots,a_m)$ is a sequence of distinct elements of $A$ and we agree to denote $\flip{\varnothing}{V}:=V$. The vertex $\flip{a_1,\ldots,a_m}{V}$ is independent of the ordering of the flips. 
\end{lemma}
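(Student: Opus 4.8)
The plan is to reduce the statement to the description of cubes of $S(P)$ recalled just above, together with a short analysis of coherence. Recall that a subgraph of $S(P)^1$ is the $1$-skeleton of a $d$-cube of $S(P)$ if and only if $\rho$ maps it onto the vertex set of a $d$-face of $\cub{B}$. Since $\rho(V)$ is a $\{0,1\}$-valued vertex of $\cub{B}$, a $d$-face of $\cub{B}$ having $\rho(V)$ among its vertices is determined by a choice of $d$ coordinates of $\cub{B}$, equivalently by a set of $d$ distinct walls of $P$; each such wall has exactly one member in $V$, and if $a_1,\dots,a_d$ are these members and $A=\{a_1,\dots,a_d\}$, then the vertex set of the corresponding $d$-cube of $S(P)$ incident to $V$ is
\[
	\set{\flip{A'}{V}}{A'\subseteq A},\qquad \flip{A'}{V}=(V\minus A')\cup\{a^\ast:a\in A'\},
\]
and this set-theoretic formula makes the independence of $\flip{A'}{V}$ from the order of the flips transparent. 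By the definition of $\cube{}{P}$ as the subcomplex of $S(P)$ consisting of the faces not incident to an incoherent vertex, such a cube lies in $\cube{}{P}$ if and only if all $2^{\card{A}}$ of the vertices $\flip{A'}{V}$ are coherent. So it suffices to show that this holds precisely when $A$ is a transverse subset of $\min(V)$, and then to verify that $A\mapsto(\text{this cube})$ is a bijection onto the cubes of $\cube{}{P}$ incident to $V$.

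For the forward implication, assume every $\flip{A'}{V}$ is coherent. For each $a\in A$, the edge of the cube joining $V=\flip{\varnothing}{V}$ to $\flip{a}{V}$ has both endpoints coherent, hence is an edge of $\cube{}{P}$, so $a\in\min(V)$ by Lemma~\ref{lemma:links and minimality}. For distinct $a,b\in A$, coherence of $\flip{a,b}{V}=(V\minus\{a,b\})\cup\{a^\ast,b^\ast\}$ rules out $a^\ast\leq b$ (apply the coherence condition to its pair of elements $a^\ast,b^\ast$), coherence of $V$ rules out $a\leq b^\ast$, and $a,b\in\min(V)$ rules out both $a\leq b$ and $b\leq a$; thus all four relations of \eqref{nesting relations} for $\{a,b\}$ fail, so $\{a,b\}$ is transverse. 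Hence $A$ is a transverse subset of $\min(V)$.

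For the reverse implication, assume $A\subseteq\min(V)$ is transverse and fix $A'\subseteq A$. A hypothetical coherence violation $x\leq y^\ast$ with $x,y\in\flip{A'}{V}$ splits into cases according to whether each of $x,y$ lies in $V\minus A'$ or in $\{a^\ast:a\in A'\}$: if both lie in $V\minus A'\subseteq V$ it contradicts coherence of $V$; in a mixed case it produces an element of $V$ lying below some $a\in A'\subseteq\min(V)$, which must then equal $a$, contradicting its membership in $V\minus A'$; and if $x=a^\ast$ and $y=b^\ast$ with $a,b\in A'$ then $a^\ast\leq b$, which for $a=b$ is impossible by $(\dagger)$ since $\min(V)$ consists of proper halfspaces, and for $a\neq b$ contradicts transversality. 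So every $\flip{A'}{V}$ is coherent and the cube lies in $\cube{}{P}$. Finally, distinct transverse subsets $A\neq A''$ of $\min(V)$ yield cubes whose vertex-neighbourhoods of $V$ are the distinct sets $\{\flip{a}{V}:a\in A\}$ and $\{\flip{a}{V}:a\in A''\}$, giving injectivity, while surjectivity is immediate from the reduction in the first paragraph. The main obstacle is the forward implication in the two-dimensional case: one must recognize that transversality of a pair $\{a,b\}\subseteq\min(V)$ is exactly the combinatorial constraint encoded in the statement that $\flip{a,b}{V}$ be coherent, which amounts to balancing the coherence of $V$, minimality in $V$, and the axiom $(\dagger)$ against one another; granted this two-dimensional dictionary, the extension to general $d$ and the bijection are bookkeeping.
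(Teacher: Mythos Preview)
Your argument is correct. The paper, however, does not supply its own proof of this lemma: it is stated in Section~\ref{section:local props of duals} as a well-known fact, with the proof deferred to the references \cite{Sageev-thesis,Roller-duality}. So there is no in-paper proof to compare against.

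That said, your write-up is exactly the kind of verification one would expect here: reduce to the description of $d$-faces of $S(P)$ incident to $V$ via $\rho$, then use Lemma~\ref{lemma:links and minimality} and a short case analysis to show that coherence of all $\flip{A'}{V}$ is equivalent to $A$ being a transverse subset of $\min(V)$. The forward direction (coherence of the square $\{V,\flip{a}{V},\flip{b}{V},\flip{a,b}{V}\}$ forces $a\pitchfork b$) and the reverse direction (transversality plus minimality kill every possible coherence violation in $\flip{A'}{V}$) are both handled cleanly, and the bijection follows. Nothing is missing.
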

This lemma is very well illustrated by Example~\ref{Cartesian products}, demonstrating how the dimensions of cubes in the complex grow with the sizes of transverse subsets.

\subsection{Cubings as Median Spaces}\label{cubings are median} 
The following fact is a direct application of Example~\ref{std median ops} to our discussion of the representation mapping $\rho$:
\begin{proposition}[Poc set duals are median]\label{poc set duals are median} Let $P$ be a discrete poc set and let $B\in P^\circ$ be a basepoint in $\cube{}{P}$. And let $\rho$ be the representation map defined in Equation~\eqref{eqn:representation map}. Then, each component of $(P^\circ,\ellone{})$ is a median metric space with the median map given by
\begin{equation}
	\med(U,V,W)=(U\cap V)\cup(V\cap W)\cup(W\cap U).
\end{equation}
Moreover, the map $\rho:\cube{}{P}_B\to\ell_1(B)$ is a median-preserving isometric embedding.
\end{proposition}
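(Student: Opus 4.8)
The plan is to reduce the statement to its combinatorial core at the level of vertices and then to obtain the assertion about $\rho$ by extending affinely over cubes. Concretely, set $m:=(U\cap V)\cup(V\cap W)\cup(W\cap U)$ --- the ``majority vote'' of $U$, $V$, $W$ --- and aim to show that $m$ is an ultra-filter lying in the component of $U$, $V$, $W$, and that it is the unique point of the median set $M(U,V,W)$ with respect to $\ellone{}$.

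First I would verify $m\in P^\circ$. For $a\in P$, the statement $a\in m$ means that at least two of $U,V,W$ contain $a$; since each of these three sets is a complete $\ast$-selection, exactly one of $a$, $a^\ast$ wins this vote, so $m$ is a complete $\ast$-selection. For coherence, if $a,b\in m$ obeyed a nesting $a\leq b^\ast$, then $a$ lies in two of the ultra-filters and $b$ in two of them, so some single ultra-filter among $U,V,W$ contains both $a$ and $b$, contradicting its coherence. Next, that $m$ stays in the component of $U$ (assuming $U,V,W$ are pairwise at finite $\ellone{}$-distance) follows from the inclusion $U\symplus m\subseteq(U\symplus V)\cap(U\symplus W)$, read off from the two cases $a\in U\minus m$ (whence $a\notin V$, $a\notin W$) and $a\in m\minus U$ (whence $a\in V$, $a\in W$); the right-hand side is finite.

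The heart of the argument is the interval formula: for ultra-filters $U,V$ in a common component, $W\in I(U,V)$ if and only if $U\cap V\subseteq W\subseteq U\cup V$. One gets this from $U\symplus V=(U\symplus W)\symplus(W\symplus V)$, which gives $\ellone{}(U,W)+\ellone{}(W,V)-\ellone{}(U,V)=\card{(U\symplus W)\cap(W\symplus V)}$, combined with the observation that $(U\symplus W)\cap(W\symplus V)=\varnothing$ exactly when those two containments hold. Since $m$ plainly satisfies $U\cap V\subseteq m\subseteq U\cup V$ and its two cyclic variants, $m\in M(U,V,W)$; and any $m'\in M(U,V,W)$ is squeezed by $(U\cap V)\cup(U\cap W)\cup(V\cap W)\subseteq m'\subseteq(U\cup V)\cap(U\cup W)\cap(V\cup W)$, whose two sides coincide with $m$ by distributivity of the lattice of subsets of $P$. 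So $M(U,V,W)=\{m\}$; since $\ellone{}$ is already a genuine metric on $P^\circ$, each component is a median metric space with the stated median map. For the ``moreover'', on vertices $\rho(U)$ is the indicator of $U\cap B$, so $\norm{\rho(U)-\rho(V)}_1=\card{(U\symplus V)\cap B}=\tfrac12\card{U\symplus V}=\ellone{}(U,V)$ (each wall of $P$ contributes exactly one point of $B$), and $\rho$ is median preserving because the coordinatewise median of $\{0,1\}$-valued functions is their majority vote, namely $\rho(m)$. To pass to the whole complex one notes that $\rho$ carries every cube of $\cube{}{P}$ affinely onto a face of the standard cube $\cub{B}$ --- on which the $\ell^1$ metric is the restriction of the ambient one and medians are coordinatewise --- and patches these descriptions over $\cube{}{P}_B$.

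I expect the combinatorics above to be routine. The two points that really need care are (i) the restriction to one connected component when $P$ is infinite, which is handled precisely by the inclusion $U\symplus m\subseteq(U\symplus V)\cap(U\symplus W)$, and (ii) the step from the vertex statement to the geodesic statement --- that the piecewise-$\ell^1$ metric on $\cube{}{P}_B$ equals the restriction of the $\ell^1$ metric on $\ell^1(B)\subseteq\ell^\infty(B)$, so that the cube-by-cube description of $\rho$ globalizes. This last point is where one uses that $\cube{}{P}$ is a cubing, and I would lean on the standard theory of piecewise-$\ell^1$ cubical complexes (\cite{Bridson_Haefliger-nonpositive_curvature}, \cite{Roller-duality}) for it rather than reprove it from scratch.
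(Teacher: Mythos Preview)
Your argument is correct and is essentially the standard one. The paper, however, does not supply a proof of this proposition at all: it records the statement as a known fact, attributing it to \cite{Roller-duality,Niblo_Roller-property_T} in the sentence preceding the proposition. So there is no ``paper's own proof'' to compare against; what you have written is precisely the combinatorial verification one finds in those references --- majority vote yields an ultra-filter, the interval $I(U,V)$ in $(P^\circ,\ellone{})$ is characterized by $U\cap V\subseteq W\subseteq U\cup V$, and distributivity of the Boolean lattice forces uniqueness of the median.

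Two minor remarks. First, your reading of the target $\ell^\infty(B)$ is the right one: the individual images $\rho(U)=\charf{U\cap B}$ need not lie in $\ell^1(B)$ when $P$ is infinite, but pairwise differences do, so ``isometric'' is with respect to the $\ell^1$ distance on differences; your phrasing ``$\ell^1(B)\subseteq\ell^\infty(B)$'' captures this. Second, the passage from the vertex statement to the full geometric realization --- that the piecewise-$\ell^1$ quotient metric on $\cube{}{P}_B$ agrees with the ambient $\ell^1$ metric under $\rho$ --- is exactly what the paper itself defers to later (its Proposition~\ref{properties of the weighted dual}) and to \cite{Bridson_Haefliger-nonpositive_curvature}; your acknowledgement that this step leans on the standard theory is appropriate.
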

Once again, we cannot stress enough the importance of $\ellone{}$ being a geodesic metric (in the discrete sense) on every $\cube{}{P}_B$. Underlying this fact are the following conclusions from the explicit form of the median map, arising as a corollary of Theorem 5.3 of~\cite{Roller-duality}, applied to the case of discrete poc sets:
\begin{proposition}[convex subsets of $P^\circ$] Let $P$ be a discrete poc set. The half-spaces of $(P^\circ,\ellone{})$ are precisely the sets of the form
\begin{equation}
	V(a):=\set{U\in P^\circ}{a\in U}\,,\quad a\in P\,.
\end{equation}
In particular, by Proposition~\ref{convex in median is V}, the convex subsets of $P^\circ$ are precisely
\begin{equation}\label{convex subset of Pcirc}
	V(A):=\bigcap_{a\in A}V(a)=\set{U\in P^\circ}{A\subseteq U}
\end{equation}
for $A$ ranging over all coherent subsets of $P$. Moreover,
\begin{equation}
	a<b\text{ in }P\IFF V(a)\subsetneq V(b)
\end{equation}
holds for all $a,b\in P$. Thus $P$ is naturally isomorphic to the poc set of halfspaces of $P^\circ$.
\end{proposition}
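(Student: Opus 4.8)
The plan is to read off both the half-space structure and the order structure of $P^\circ$ directly from the explicit median formula of Proposition~\ref{poc set duals are median}. Throughout I would work inside one connected component of $\cube{}{P}$, where $\ellone{}$ takes finite integer values and the median identities of that proposition apply; the only convex subset of $P^\circ$ meeting more than one component is $P^\circ$ itself, which equals $V(\{0^\ast\})$, so nothing is lost.

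First the easy inclusions. In a median space $I(U,W)$ is exactly the set of $Z$ with $\med(U,Z,W)=Z$, since $Z$ lies automatically in $I(U,Z)\cap I(Z,W)$ and the median is the unique point of $M(U,Z,W)$. Substituting $\med(U,Z,W)=(U\cap Z)\cup(Z\cap W)\cup(W\cap U)$ shows $U\cap W\subseteq Z$ for every $Z\in I(U,W)$; in particular, $a\in U$ and $a\in W$ force $a\in Z$, so each $V(a)$ is convex, and since the ultra-filter axioms give $P^\circ\minus V(a)=V(a^\ast)$, each $V(a)$ is a half-space. Hence $V(A)=\bigcap_{a\in A}V(a)$ is convex as an intersection of convex sets, and when $V(A)\neq\varnothing$ any ultra-filter it contains contains $A$, so $A$ is coherent.

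The substantive step is the converse: every half-space $H$ of $P^\circ$ equals some $V(a)$. I would pick $U\in H$ and $W\in P^\circ\minus H$ with $\ellone{}(U,W)$ minimal; since the metric is integer-valued the minimum is attained, and tracing any edge-geodesic from $U$ to $W$ produces an adjacent pair with one endpoint in $H$ and the other outside, so the minimum is $1$ and we may take $U,W$ themselves adjacent. Then $W=\flip{a}{U}$ with $a\in\min(U)$, so $a\in U$, $a^\ast\in W$, and $\{a,a^\ast\}$ is the unique wall separating $U$ from $W$. To see $H=V(a)$: if some $U'\in V(a)$ lay outside $H$, then $\med(U',W,U)$ would lie in $I(U,W)=\{U,W\}$ (adjacent vertices) and, by convexity of the complementary half-space, in $P^\circ\minus H$, hence equal $W$; but then $W\in I(U',U)$, which is impossible, since the wall $\{a,a^\ast\}$ separates $U'$ from $W$ (as $a\in U'$, $a^\ast\in W$) and $W$ from $U$ but not $U'$ from $U$, forcing $\ellone{}(U',W)+\ellone{}(W,U)>\ellone{}(U',U)$. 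Thus $V(a)\subseteq H$, and applying the same argument to the complementary half-space gives $V(a^\ast)\subseteq P^\circ\minus H$, whence $H=V(a)$. Now Proposition~\ref{halfspace presentation of convex sets} applies ($P^\circ$ is metrically complete, and its discreteness makes every subset and every half-space closed), so every convex subset of $P^\circ$ is an intersection of half-spaces $V(a)$, that is, $V(A)$ for the corresponding $A\subseteq P$, necessarily coherent when the set is nonempty. This proves the first two assertions.

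For the order statement: $a\le b$ gives $V(a)\subseteq V(b)$ because ultra-filters are upward-closed, and the inclusion is proper unless $a=b$ (the cases $a=0$, $b=0^\ast$ being trivial). Conversely, if $a$ is proper, $b\neq 0^\ast$, and $a\not\le b$, then $(\dagger)$ makes $\{a,b^\ast\}$ coherent, and any coherent set extends to an ultra-filter $U$; then $U\in V(a)\minus V(b)$, so $V(a)\not\subseteq V(b)$. Hence $a<b\IFF V(a)\subsetneq V(b)$, and together with $V(a^\ast)=P^\circ\minus V(a)$ this realizes $a\mapsto V(a)$ as a $\ast$-equivariant order isomorphism of $P$ onto the poc set of half-spaces of $P^\circ$. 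The hard part is the ``every half-space is a single $V(a)$'' step: convexity of the $V(a)$ falls straight out of the median formula, but ruling out that a half-space is a genuine corner $V(A)$ with $\card{A}\ge 2$ needs the minimal-crossing-edge analysis above and the interplay between adjacency, the median formula, and additivity of $\ellone{}$ on intervals; a secondary ingredient is the standard fact that coherent subsets extend to ultra-filters, which is what makes $P^\circ$ rich enough to separate distinct half-spaces.
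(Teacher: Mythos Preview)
The paper does not actually prove this proposition: it is stated immediately after Proposition~\ref{poc set duals are median} as a known consequence of the explicit median formula, with the surrounding text attributing ``the following facts'' to \cite{Roller-duality,Niblo_Roller-property_T}. So there is no in-paper argument to compare against; you are supplying a proof the authors chose to omit.

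Your argument is correct and is exactly the kind of proof the paper's phrasing (``the explicit form of the median map allows for a complete characterization\ldots'') invites. The convexity of each $V(a)$ via $\med(U,Z,W)=(U\cap Z)\cup(Z\cap W)\cup(W\cap U)$ is immediate, and your minimal-crossing-edge argument for the converse is the standard one: choosing adjacent $U\in H$, $W=\flip{a}{U}\notin H$ and showing $V(a)\subseteq H$, $V(a^\ast)\subseteq P^\circ\minus H$ by a median/interval contradiction is clean, and your distance computation (the wall $\{a,a^\ast\}$ forces $\ellone{}(U',W)+\ellone{}(W,U)=\ellone{}(U',U)+2$) is right. Invoking Proposition~\ref{halfspace presentation of convex sets} to pass from half-spaces to arbitrary convex sets, and the extension of coherent sets to ultra-filters for the order statement, are both legitimate and standard.

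Two small points you might tighten. First, your handling of multiple components is a bit glib: the assertion that ``the only convex subset of $P^\circ$ meeting more than one component is $P^\circ$ itself'' depends on conventions about intervals at infinite distance that neither you nor the paper pins down; it would be cleaner simply to say the statement is to be read componentwise (as the paper effectively does throughout). Second, note that for coherent $A$ one always has $V(A)\neq\varnothing$ (coherent sets extend to ultra-filters), so the empty convex set is not of the form $V(A)$ with $A$ coherent; this is a harmless edge case in the proposition's phrasing rather than in your proof.
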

This proposition enables the computation of the convex hull of a set of vertices in $P^\circ$, defined, as usual, as the intersection of all convex subsets of $P^\circ$ containing the given set.
\begin{lemma} Let $P$ be a discrete poc set. Then the convex hull of a set $\mathscr{F}$ of vertices in $P^\circ$ equals $V(\bigcap\mathscr{F})$.
\end{lemma}
\begin{proof} If $C$ denotes the convex hull of the collection $\mathscr{F}$ and $F=\bigcap\mathscr{F}$, then, $C$ is clearly contained in $D=V(F)$, since each $U\in\mathscr{F}$ contains $F$, which is the same as to say that $U\in V(F)$. Now, since $C$ is itself an intersection of sets of the form $V(a)$, $a\in P$, if $W\in D\minus C$, then there is $h\in P$ with $W\in V(h)$ and $C\subset V(h^\ast)$. The latter implies $h^\ast\in U$ for every $U\in\mathscr{F}$, which means $h^\ast\in F$. But that could not be, since $W\in V(F)$ and already $h\in W$, which is a contradiction.
\end{proof}
Some straightforward and useful corollaries are:
\begin{corollary}\label{interval} Let $P$ be a discrete poc set. For any $U_1,U_2\in P^\circ$, the interval $I(U_1,U_2)$ coincides with the set of all $W\in P^\circ$ containing $U_1\cap U_2$.
\end{corollary}
\begin{proof} Since intervals in median spaces are convex, $I(U_1,U_2)$ is the convex hull of $\{U_1,U_2\}$. Now, set $\mathscr{F}=\{U_1,U_2\}$ and apply the last lemma. 
\end{proof}
\begin{corollary}[implies \cite{Leary}, Theorem B.8]\label{finite convex hull} Let $P$ be a discrete poc set and let $B\in P^\circ$. Then any compact subset of $\cube{}{P}_B$ is contained in a finite convex sub-complex (and hence a sub-cubing) of $\cube{}{P}_B$. 
\end{corollary}
\begin{proof} Let $K\subset\cube{}{P}_B$ be the compact subset in question. Without loss of generality, $K$ is a finite subset of $P^\circ$ (else, replace $K$ with the set of vertices of all cubes which intersect $K$; any convex set containing the new $K$ must contain the original, too).

It therefore suffices to verify that the convex hull of a finite set of vertices in an almost-equality class of $P^\circ$ is finite. We proceed by induction on $|K|$, the base case $|K|=1$ being trivial. It then suffices to prove that if $K\subseteq P^\circ$ is convex and finite and $W\in P^\circ$ lies in the same almost-equality class as $K$, then $K\cup\{W\}$ has a finite convex hull. 

Let $C$ be the convex hull of $K\cup\{W\}$, let $S\subset P$ be the set of all $a\in W$ such that $K\subset V(a^\ast)$ and let $T\subset P$ be the set of all $h\in P$ with $K\in V(h)$. Then $K=V(T)$, $S^\ast\subset T$ and hence, by the last lemma, $C=V(W\cap T)=V(T\minus S)$.

Since $W$ is in the same almost-equality class as $K$, $S$ is finite. Because every ultra-filter in $C$ differs from an ultra-filter in $K$ at most by a complete $\ast$-selection on $S$, we conclude that $|C|\leq |K|\cdot 2^{|S|}$, as desired.
\end{proof}

\bigskip
Our current goal is to extend this combinatorial structure theory of abstract cubings to meet the metric theory of median spaces at a point where we can plainly view finite cubings as `finitely presented median spaces', and proper median spaces as pointed Gromov--Hausdorff limits of finite cubings. For example, one would like to be able to reason in a way hinted at by Figure~\ref{fig:median limit}.

\begin{figure}[t]
	\begin{center}
		\includegraphics[width=\columnwidth]{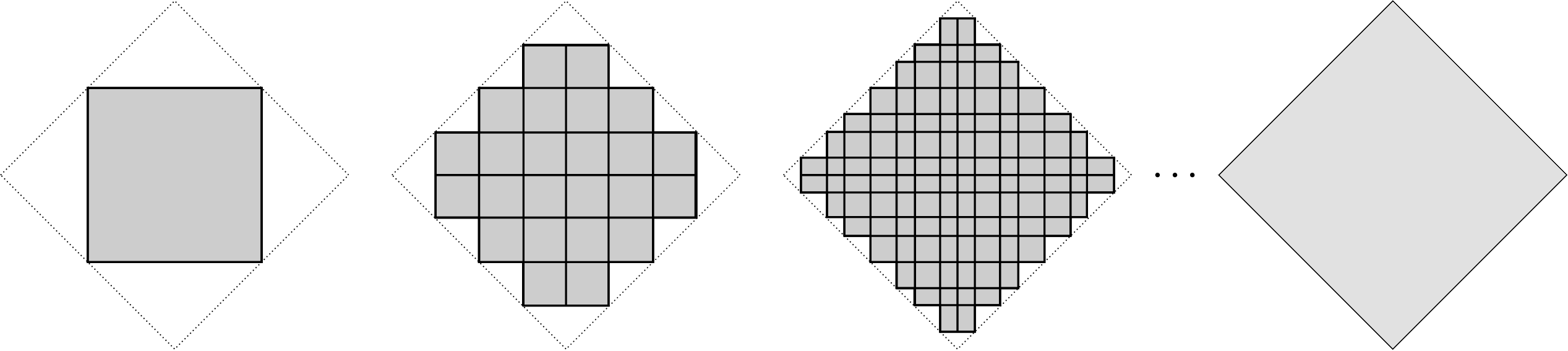}
	\end{center}
	\caption{The unit ball in the $\ell_1$ plane (right) as a limit of a sequence (left to right) of rescaled cubings.\label{fig:median limit}}
\end{figure}

\subsection{Weighted Realizations of $\cube{}{P}$}\label{section:weighted cubings} Let $P$ be a discrete poc set, and let $B$ be a base point in $P^\circ$, fixed once and for all. Recall that $\maxP\in U$ for all $U\in P^\circ$, which makes $\maxP$ an uninformative coordinate of $\RR^B$ when it comes to representing vertices of $\cube{}{P}$.

\medskip
A more varied geometric realization of $\cube{}{P}$ is needed.
\begin{definition}[weight on a poc set] A weight on $P$ is a function $w:P\to\RRplus$ satisfying $w(0)=0$ and $w(a^\ast)=w(a)$ for all $a\in P$. We say that a weight is non-degenerate if $w(a)>0$ for all proper $a\in P$. A weight $w$ is used for defining a map $\diag{w}:\RR^B\to\RR^B$ via the pointwise product $\diag{w}(x)=wx$.
\end{definition}
Given a weight $w$ on $P$ we revisit the map $\rho:P^\circ\to\RR^B$, only that now we view it as a piecewise affine map of $\rho:S(P)\to\RR^B$. Extending the construction from \cite{Niblo_Roller-property_T} slightly, we define a new embedded complex in $\RR^B$.
\begin{definition}[$\ell_1$ realization of $\cube{}{P}$] Let $P$ be a discrete poc set with weight $w$ and basepoint $B\in P^\circ$. The {\it weighted dual $\cube{w}{P}_B$ of $P$ (with weight $w$ and basepoint $B$)} is defined to be the image of $\cube{}{P}_B$ under the map $\rho_w:\cube{}{P}\to\ell_1(B)$ obtained as the affine extension $\diag{w}\circ\rho:P^\circ\to\RR^B$, endowed with the metric $\ellone{w}$ induced on it from $\ell_1(B)$. 
\end{definition}
Some important observations:
\begin{proposition}[Properties of $\cube{w}{P}_B$]\label{properties of the weighted dual} Let $P$ be a discrete poc set with non-degenerate weight $w$ and basepoint $B\in P^\circ$. If $\cube{}{P}_B$ is locally finite, then:
\begin{enumerate}
	\item $\rho_w$ is a homeomorphism of $\cube{}{P}_B$ onto $\cube{w}{P}_B$. In particular, $\cube{w}{P}_B$ is a cubing.
	\item $\ellone{w}$ is a piecewise-$\ell_1$ metric: $\ellone{w}$ coincides with the quotient metric on $\cube{w}{P}_B$ obtained by endowing each cube with the metric induced on it from $\ell_1(B)$ (and then carrying out the appropriate identifications).
	\item $\rho_w$ restricts to a median-preserving map of $P^\circ$ into $\ell_1(B)$.
	\item $\cube{w}{P}_B$ is a median metric space -- in fact, the smallest geodesic median subspace of $\ell_1(B)$ containing $\rho_w(P^\circ)$.
\end{enumerate}
\end{proposition}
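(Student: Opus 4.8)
The plan is to deduce all four parts from a single observation about $\diag{w}$. Recall that $\rho$ sends $B$ to the origin of $\ell^1(B)$ and each vertex of $\cube{}{P}_B$ (an ultrafilter almost-equal to $B$) to a finitely-supported $\{0,1\}$-vector, the coordinate over $\maxP$ being constant on all of $\cube{}{P}_B$; on the remaining (proper) coordinates $\diag{w}$ acts, since $w$ is non-degenerate, as a rescaling by strictly positive factors. Such a map is a cellular homeomorphism onto its image, commutes coordinatewise with the median $\med_\RR$ (multiplying one real coordinate by a positive scalar is order-preserving), and distorts the ambient $\ell^1$-metric inside each cell only affinely. Everything below is an exploitation of these four features. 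For (1) I would first recall, from Theorem~\ref{Sageev-Roller duality}, Proposition~\ref{poc set duals are median} and the surrounding discussion, that $\rho$ is a cellular homeomorphism of the cubing $\cube{}{P}_B$ onto a subcomplex of $\ell^1(B)$; then post-composing with $\diag{w}$ --- injective there by non-degeneracy, with continuous inverse (rescale proper coordinates by $1/w$) and affine on each cell --- shows $\rho_w=\diag{w}\circ\rho$ is again a cellular homeomorphism. Transporting the cubical structure along $\rho_w$, the cells of $\cube{w}{P}_B$ are the $\rho_w$-images of those of $\cube{}{P}_B$, vertex links (being combinatorial) are unchanged and hence still flag, and simple connectedness is preserved; so $\cube{w}{P}_B$ is a cubing.

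For (3) I would observe that, by Proposition~\ref{poc set duals are median}, under $\rho$ the poc-set median on $P^\circ$ is the coordinatewise majority of bits, and that $\diag{w}$ commutes with $\med_\RR$ coordinatewise, so $\rho_w$ is median-preserving on $P^\circ$. For (2), write $\bar d$ for the quotient metric (the infimum of $\ell^1$-lengths of taut broken paths, by~\eqref{ell-p formula}) and $d_1=\ellone{w}$ for the metric induced from $\ell^1(B)$. Inside each cell the two agree, so $\bar d\ge d_1$ is the triangle inequality; for $\bar d\le d_1$ it suffices to produce, between any two points, a broken path of $\ell^1$-length $d_1$, and applying $\diag{1/w}$ this reduces to the unit realization $\cube{}{P}_B$, since $\diag{w}$ sends coordinatewise-monotone paths to coordinatewise-monotone paths. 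Between two vertices $U,V'$ the monotone path is a combinatorial geodesic obtained by successive flips: whenever $V'\ne U$ one finds $a\in\min(V')\cap(U\minus V')$ --- descend a maximal chain in $U$ below a $\le$-minimal element of $U\minus V'$ and use $(\dagger)$ together with the coherence of $V'$ to see the bottom element still lies outside $V'$ --- so $\flip{a}{V'}$ is a neighbour one separating wall nearer $U$; iterating crosses each separating wall exactly once. The passage to interior points is the routine reduction (slide each endpoint toward the other within its carrier along a coordinate-monotone segment, and induct on the carrier); this is the piecewise-$\ell^1$ counterpart of the verification carried out in \cite{Bridson_Haefliger-nonpositive_curvature}.

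For (4): by (2) the metric of $\cube{w}{P}_B$ is the restriction of $\|\cdot\|_1$ and is geodesic (the monotone broken paths realize it), and $\cube{w}{P}_B$ contains its vertex set $\rho_w(P^\circ)$ by construction. Since $\ell^1(B)$ is a median space whose median is coordinatewise and whose median sets are singletons, for $x,y,z\in\cube{w}{P}_B$ one gets $M_{\cube{w}{P}_B}(x,y,z)=\{m\}\cap\cube{w}{P}_B$ with $m$ the coordinatewise median, so the median property reduces to $\cube{w}{P}_B$ being closed under $m$. On vertices this is the Roller--Niblo formula $\med(U,V,W)=(U\cap V)\cup(V\cap W)\cup(W\cap U)$, again an ultrafilter almost-equal to $B$. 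For arbitrary points I would either reduce, via the $\ell^1$-convexity of individual cells and the vertex case, to a finite subcomplex, or --- more cheaply --- transport the median-closedness of $\cube{}{P}_B$ (Roller, Chepoi) through the bijection $\rho_w\circ\rho^{-1}$, which preserves coordinatewise medians in both directions. For minimality, any geodesic median subspace $Y\subseteq\ell^1(B)$ containing $\rho_w(P^\circ)$ contains, with any two adjacent vertices, the unique $\ell^1$-geodesic between them --- the edge joining them, as they differ in a single coordinate --- and then, by induction on dimension, contains the interior of every cell, since an interior point of a cell is the coordinatewise median of three points lying in proper faces of the cell; hence $Y\supseteq\cube{w}{P}_B$.

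The hard part is exactly the median property at non-vertex points: it is the statement that an NPC cube complex is a median space for its $\ell^1$-metric, a self-contained proof of which is forced through the combinatorics of walls. The practical resolution is to quote this single fact from \cite{Roller-duality} or \cite{Chepoi-median_graphs} and merely check that the non-degenerate rescaling $\diag{w}$ transports it --- which is routine --- and the one other delicate point, the reduction to the vertex case in (2), rests on the same wall combinatorics. Everything else is bookkeeping with the affine map $\diag{w}$.
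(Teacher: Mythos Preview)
Your argument is correct and the overall strategy matches the paper's: both rest on the observation that $\diag{w}$ is a median-preserving, cell-wise affine bijection, after which (1)--(3) are bookkeeping. If anything your treatment of (2) and (3) is more explicit --- the paper dispatches (2) in one sentence (axis-parallel monotone paths are $\ell^1$-geodesics provided they cross each hyperplane at most once) and simply cites \cite{Chatterji_Drutu_Haglund-measured_spaces_with_walls} for (3).

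The one substantive divergence is in (4). You correctly identify median-closedness at non-vertex points as the crux and propose to defer it; but note that \cite{Roller-duality,Chepoi-median_graphs} are primarily about the discrete (vertex-graph) median structure, and your transport-from-the-unit-case alternative merely reduces to the $w\equiv 1$ instance of the very claim under proof. The paper, by contrast, supplies a short self-contained argument you may find useful: reduce to finite $P$; for $x,y,z\in\cube{w}{P}_B$ let $A\subseteq B$ be the set of coordinates $a$ on which at least one of $x(a),y(a),z(a)$ lies strictly between $0$ and $w(a)$, and observe that $A$ is \emph{transverse}. On $B\minus A$ the coordinatewise median $m$ is then determined by majority vote among $\{0,w(a)\}$-values, and this pins $m$ to the unique cube of $\cube{w}{P}_B$ whose vertices agree with $m$ on $B\minus A$ and take arbitrary endpoint values on $A$. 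Your minimality argument (an interior point of a cell is a median of three points in proper faces) is a pleasant alternative to the paper's, which instead notes that the minimal cube spanned by two antipodal vertices coincides with their $\ell^1$-interval.
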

\begin{proof} To verify (1) we will use the local finiteness of $\cube{}{P}_B$. As $\cube{1}{P}_B$ and $\cube{w}{P}_B$ are related by the bijective stretching map $\diag{w}$, it would suffice to verify that the restriction of $\diag{w}$ to $\cube{1}{P}_B$ is a homeomorphism onto $\cube{w}{P}_B$. Recall that a mapping $f:X\to Y$ between topological spaces is continuous if and only if, given a locally finite cover of $X$ by {\it closed} sets, $f$ restricts to a continuous map on each element of the given cover. Thus, to verify the continuity of $\diag{w}$ and $\diag{w}\inv$ it suffices to verify their continuity when restricted to (each) single cube of $\cube{1}{P}_B$ and $\cube{w}{P}_B$, respectively. Since all the cubes inquestion are finite-dimensional, we are done.

To see (2), observe that a cube of $\cube{}{P}$ is mapped to a rectangular parallelopiped with edges parallel to the coordinate axes (RAP). Since paths that are piecewise parallel to the coordinate axes are geodesics in $\ell_1(B)$ provided they cross each hyperplane at most once, the quotient metric induced from endowing each RAP with the trace metric from $\ell_1(B)$ coincides with the distance measured along geodesics of $\ell_1(B)$ which do not exit $\cube{w}{P}_B$. The connectedness of $\cube{w}{P}_B=\rho_w(\cube{}{P}_B)$ finishes the proof.

Property (3) follows from \cite{Chatterji_Drutu_Haglund-measured_spaces_with_walls}, Lemma 3.12 and Theorem 5.1.

Property (4) is the tricky one. Our argument for (2) explains the fact of $\cube{w}{P}_B$ being a geodesic subspace of $\ell_1(B)$. One observes:
\begin{itemize}
	\item[-] Given points $x,y,z\in\cube{w}{P}_B$, Since $P$ is discrete we must have $x(a)=y(a)=z(a)$ for all but finitely many $a\in P$, reducing the problem to the case when $P$ is finite;
	\item[-] For any pair of vertices $U,W\in P^\circ$ sharing a cube in $\cube{}{P}_B$, if $Q$ is the minimal cube containing both $U$ and $W$ then $\rho_wQ=I(\rho_wU,\rho_wW)$, the interval calculated in $\ell_1(B)$, is the unique RAP with antipodal vertices $\rho_wU$ and $\rho_wW$. This explains the minimality property claimed in (4).
\end{itemize}
It remains to verify that the $\ell_1(B)$-median $\med(x,y,z)$ of a triple of points $x,y,z\in\cube{w}{P}_B$ is also contained in $\cube{w}{P}_B$. This verification is purely technical, and we omit the details in the interest of saving space, leaving only an outline of the argument: knowing that $\rho_w:P^\circ\to\ell_1(B)$ is median-preserving, one considers the set $A$ of all $a\in B$ for which at least one of $x(a),y(a),z(a)$ does not belong to $\{0,w(a)\}$. If $A$ is empty, then $x,y,z$ are vertices of $\cube{w}{P}_B$ -- images of points in $P^\circ$ under $\rho_w$, that is -- and there is nothing to prove. If $A$ is non-empty, observe that $A$ is a transverse set. Considering $m=\med(x,y,z)$ as a real-valued function of $B$ and recalling medians in $\ell_1(B)$ are computed coordinate-by-coordinate, we conclude that the value of $m$ on $B\minus A$ is determined by majority (as in the $A=\varnothing$ case), thus forcing $m$ into the unique cube whose vertices coincide with $m$ on $B\minus A$ and have arbitrary values $m(a)\in\{0,w(a)\}$ for all $a\in A$.
\end{proof}
\begin{remark} From (1) it is now clear that replacing the geometries of individual cubes with a different $\ell_p$ geometry does not change the homeomorphism type of the constructed space. Thus, $\cube{w}{P}_B$ can serve as a realization of $\cube{}{P}_B$ for any of the allowed piecewise geometries and for any choice of weight $w$ as long as $\cube{}{P}_B$ is locally finite.
\end{remark}
\begin{definition} By a piecewise-$\ell_1$ cubing we mean a space of the form $\cube{w}{P}_B$ endowed with the metric $\ellone{w}$. This metric space will be denoted $\cubeone{w}{P}$. In the special case when $w(a)=1$ for all proper $a\in P$, we will identify $\cubeone{w}{P}=\cubeone{}{P}$, and refer to $\ellone{w}$ simply as $\ellone{}$.
\end{definition}

\begin{figure}[t]
	\begin{center}
		\includegraphics[width=.6\columnwidth]{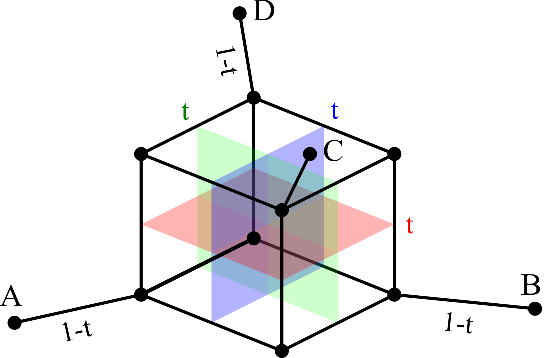}
	\end{center}
	\caption{The space $X_t$ from Example~\ref{example:non-injective 1}, constructed as a weighted piecewise-$\ell_1$ cubing dual to the poc set $P$ from Example~\ref{example:non-injective 2}. Note how the walls drawn inside the three-dimensional cube match the non-singleton halfspaces of the poc set $P$.\label{fig:non-injective median space 2}}
\end{figure}

\begin{example}\label{example:non-injective 2} Revisiting Example~\ref{example:non-injective 1}, we represent the space $X_t$ (figure \ref{fig:non-injective median space}) from that example as a piecewise-$\ell_1$ cubing. First, the poc-set structure may be chosen to have the form $P=S\cup S^\ast\}$ where
\begin{displaymath}
	S=\left\{\varnothing,
		\{A\},\{B\},\{C\},\{D\},
		\{A,B\},\{A,C\},\{A,D\}	
	\right\}
\end{displaymath}
with $P$ considered a sub poc set of $\power{\{A,B,C,D\}}$. Next we set the weights to equal
\begin{displaymath}
	w(\text{singleton})=1-t\,,\;\;
	w(\text{pair})=t.
\end{displaymath}
Note how the weights of walls separating any given pair of points in $\{A,B,C,D\}$ add up to $2$---see figure \ref{fig:non-injective median space 2}.
\end{example}

\subsection{Halfspaces and Walls of the Weighted Realization} 
Understanding the open (closed) half-spaces of $\cube{w}{P}_B$ for non-degenerate $w$ is easy: they are the intersections of $\cube{w}{P}_B$ with the open (closed) halfspaces of $\ell_1(B)$. The latter all have the form
\begin{gather}
	\half{b}(t):=\set{x\in\cube{w}{P}_B}{x(b)<t}\notag\\
	\text{or}\\
	\half{b^\ast}(t):=\set{x\in\cube{w}{P}_B}{x(b)>t}.\notag
\end{gather}
Note how, for any choice of $0<t<w(b)$ and $b\in B$, one has
\begin{equation}
	\rho_w(P^\circ)\cap\half{b}(t)=V(b)
\end{equation}
so that the `abstract' halfspaces---the elements of $P$---are reconstructed from the `visual' half-spaces of the realization. It is common in the field to refer to the sets of the boundaries of half-spaces
\begin{equation}
	\wall{b}(t)=\wall{b^\ast}(t)=\set{x\in\cube{w}{P}}{x(b)=t}\,,\qquad
	0<t<w(b)
\end{equation}
as the {\it walls}, or {\it hyperplanes}, of the cubical complex $\cube{w}{P}_B$, noting that every wall separates $V(b)$ from $V(b^\ast)$ in $\cube{w}{P}_B$, while having a `thickness' of $w(b)$. The distances between vertices are influenced accordingly: the following expression for the distance between vertices $U,W\in P^\circ$ is derived directly from property (2) stated in Proposition~\ref{properties of the weighted dual}:
\begin{equation}\label{ellone formula}
	\ellone{w}(\rho_wU,\rho_wW)=\sum_{a\in W\minus U}w(a)=\sum_{b\in U\minus W}w(b)\,,
\end{equation}
as $U\minus W$ indexes the set of walls separating $U$ from $W$.

\medskip
For a general pair of points $x,y\in\cube{w}{P}_B$ a similar formula may be written down. 
\begin{definition}[separators]\label{defn:separator} Let $x,y\in\cube{w}{P}_B$. The {\it separator} of $x$ and $y$ is the set---denoted $x\minus y$ by abuse of notation---of all $a\in P$ such that either
\begin{itemize}
	\item $a\in B$ and $x(a)<y(a)$;
	\item $a^\ast\in B$ and $x(a)>y(a)$.
\end{itemize}
\end{definition}
\begin{remark} Note how, if $w$ is non-degenerate, one always has $\rho_wU\minus\rho_wW=U\minus W$ for $U,W\in P^\circ$. Thus, the separator of a pair of vertices is the same as their combinatorial separator in $P$.
\end{remark}
The distance formula then trivially becomes:
\begin{equation}\label{general ellone formula}
	\ellone{w}(x,y)=\sum_{a\in x\minus y}\left|y(a)-x(a)\right|.
\end{equation}
There are two cases to consider for the summands:
\begin{itemize}
	\item[-] $x(a)=0$ and $y(a)=w(a)$ produces a summand of $w(a)$. In this case we see that {\it every} wall of the form $\wall{a}(t)$ separates $x$ from $y$ -- hence the contribution of $w(a)$ to the distance between the points.
	\item[-] $0<x(a)<y(a)<w(a)$. In this case, the two points are contained in a thickened hyperplane of $\cube{w}{P}_B$, which is merely a direct product of an interval with the dual cubing of the sub poc set $P_{\pitchfork a}$ of all $b\in P$ satisfying $b\pitchfork a$.
\end{itemize}

\subsection{Degenerate Weights}\label{degenerate weights} Some additional care is required for the case when some of the weights on a cubing are set to zero. This degenerate case is essentially identical to the non-degenerate one, as one may think of a null weight assigned to a wall as the limiting result of shrinking the `thickness' of that wall to zero. More formally, if the given weight $w$ is degenerate, the metric $\ellone{w}$ becomes a pseudo-metric. To obtain the reduction to the non-degenerate case, set:
\begin{equation}
	Z=\set{a\in P}{a\neq\minP,\maxP\text{ and }w(a)=0}\,,\quad
	\bar{P}:=P\minus Z\,,\quad
	\bar{w}=w\res{\bar{P}}.
\end{equation}
Inspecting the situation yields that $inc:\bar{P}\to P$ is a poc morphism, and that its dual $inc^\circ:P^\circ\to \bar{P}^\circ$, known as the {\it co-restriction map}, is given by $U\mapsto U\minus Z$. It is straightforward to prove that the (unique) piecewise affine extension of $inc^\circ$ induces an isometry of $\hau{\cube{w}{P}}$ onto $\cube{\bar{w}}{\bar{P}}$.

\subsection{Summability Concerns}\label{summability}
An interesting phenomenon takes place in the transition from unit piecewise-$\ell_1$ cubings (realized as the form $\cube{1}{P}_B$) to their weighted realizations $\cube{w}{P}_B$, having to do with the possible presence of nested sequences with summable weights.

Consider the following example (compare with~\cite{Bridson_Haefliger-nonpositive_curvature}, Example 7.11). 
\begin{example}[locally finite weighted cubing that's not complete]\label{example:incomplete} Let $P$ be the poc set generated by the natural numbers with their standard ordering, that is: $P$  consists of the symbols $\minP,\maxP$, and distinct elements $n,n^\ast$ for each $n\in\NN$, with an appropriately defined $\ast$-operator and subject to the mandated properties of poc sets. 
Also consider the weight $w(n)=w(n^\ast)=2^{-n}$, and pick the basepoint $B=\NN\cup\{\maxP\}$. 
Then $\cube{w}{P}_B$ is realized in $\ell_1(B)$ as a union of intervals $I_n$, $n\in\NN$ where each $x\in I_n$ has the form:
\begin{equation}
	x(k)=\left\{\begin{array}{cl}
		\tfrac{1}{2^k}	&\text{if }k<n\\[.25em]
		t\in[0,\tfrac{1}{2^n}]	&\text{if }k=n\\[.25em]
		0	&\text{if }k>n
	\end{array}\right.
\end{equation}
(We drop the coordinate labelled $\maxP$, as all $U\in P^\circ$ contain it). 
The endpoints of the various $I_n$ are precisely ultra-filters in $P^\circ$ which are almost-equal to $B$, with the only element of $P^\circ$ left unaccounted for being $C=\NN^\ast\cup\{\maxP\}$, which, due to the summability of the weight sequence, could be realized in $\ell_1(B)$ as $c:\NN\to\RR$, $c(k)=\tfrac{1}{2^k}$.
We conclude that $\cube{w}{P}_B$ is isometric to the interval $[0,1)$, while adding the point of $\ell_1(B)$ corresponding to the ``vertex at infinity'', $C$, yields the completion of $\cube{w}{P}_B$.

This situation should be compared with the unit cubing realization, where $\rho_w(C)$ ends up being the point $c(k)=1$, $k\in\NN$, and the intervals $I_n$ are edges of the unit infinite cube. While the union of the $I_n$ does lie in $\ell_1(B)$, the point $c=\rho_w(C)$ does not, and we observe that $\cube{w}{P}_B$ is complete, being isometric to the interval $[0,\infty)$.
\end{example}

The last example demonstrates, among other things, how the introduction of variable weights opens up a way for the arguments at the end of Section~\ref{section:geometric complexes} to fail (now, that there are infinitely many isometry types of cells in the cubing), resulting in $\cube{w}{P}_B$ being incomplete.

This raises a question, made relevant by Roller's work on the topological structure of $P^\circ$ (where $P$ is a discrete poc set) as a Stone median algebra with respect to the Tychonoff topology (see \cite{Roller-duality}, Section 5). In this context, $P^\circ$ is thought of as a subspace of the Tychonoff product $\power{P}$, in turns out to be closed. As a result, if $X\subset P^\circ$ is an almost-equality class, then the Tychonoff closure $\bar{X}$ of $X$ in $P^\circ$ is compact.

In the piecewise-$\ell_1$ case, the metric completion of $Z:=\cube{w}{P}_B$ is obtained by taking the closure of $\bar{Z}$ in $\ell_1(B)$. At the same time, realizing that convergence in the Tychonoff topology is coordinatewise convergence, we conclude that every point of $\bar{Z}$ that is a limit of vertices $(U_n)\subset\cube{}{P}_B$ must itself correspond to an element of $P^\circ$ arising as the Tychonoff limit of the $U_n$. Skipping ahead a little bit, we note that convergence in  weighted piecewise-$\ell_p$ realizations of $\cube{}{P}_B$ implies Tychonoff convergence {\it for any choice of $p\in[1,\infty]$}, but it is not immediately clear that the metric completion may be computed in the same way. Moreover, it is easily shown (\cite{Guralnik-Roller_boundary}, Corollary 3.10) that the Tychonoff closure of an almost-equality class is a union of almost-equality classes. All together, these observations motivate the following question, for which we expect a positive answer, at least in the case when $p=1$ and with possible finiteness restrictions on $\cube{}{P}$ (e.g. $\cube{}{P}$ locally finite, or, more generally, $P$ contains no infinite transverse set):
\begin{question} Let $p\in[1,\infty]$, and suppose $P$ is a discrete poc set with weight $w$, and let $B\in P^\circ$ be a basepoint. To what extent is it true that the metric completion of the weighted piecewise-$\ell_p$ cubing $\cube{w}{P}_B$ may be obtained as the intersection of $\ell_p(B)\subset\RR^B$ with the realization $\rho_w(\cube{}{P})\subset\RR^B$? (Keep in mind that the latter is the union of {\it all} the connected components of $\cube{}{P}$, realized in $\RR^B$ via the map $\rho_w$ determined by the basepoint $B$.)
\end{question}

\section{Piecewise-$\ell_\infty$ Cubings} We have finally arrived at the point where a formal and workable construction of piecewise-$\ell_\infty$ cubings is given.

\subsection{Generalities} In view of Sageev--Roller duality (Theorem~\ref{Sageev-Roller duality}) the definition of a piecewise-$\ell_\infty$ cubing from Section~\ref{section:geometric complexes} may be rewritten as follows.
\begin{definition} By a piecewise-$\ell_\infty$ cubing we mean a metric space of the form $\cube{w}{P}_B$, where $P$ is a discrete poc-set, $w$ is a non-degenerate weight and $B\in P^\circ$ is a basepoint. This time we endow $\cube{w}{P}_B$ with the quotient metric $\ellinfty{w}$ obtained by endowing each of the cubes of $\cube{w}{P}$ with the metric induced on it from $(\RR^B,\norm{\cdot}_\infty)$. The resulting quotient space will be denoted $\cubeinfty{w}{P}_B$. By a {\it unit} piecewise-$\ell_\infty$ cubing we mean a space of the form $\cubeinfty{w}{P}_B$ with $w(a)=1$ for every proper $a\in P$. We denote this space with $\cubeinfty{}{P}_B$, and its metric with $\ellinfty{}$.
\end{definition}
The discussion at the end of Section~\ref{section:geometric complexes} provide the following result for the case of finite and locally-finite cubings, respectively:
\begin{lemma} Let $P$ be a finite poc set with weight $w$ and basepoint $B\in P^\circ$. Then the metric $\ellinfty{w}$ on $\cube{w}{P}_B$ is the length metric induced on $\cube{w}{P}_B$ from $(\RR^B,\norm{\cdot}_\infty)$. Furthermore, $\cubeinfty{w}{P}_B$ is a complete geodesic metric space.\ep
\end{lemma}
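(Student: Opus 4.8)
The plan is to argue entirely inside the faithful realization supplied by the representation map: by Proposition~\ref{properties of the weighted dual}(1), $\rho_w$ identifies the abstract complex $\cube{}{P}_B$ with the genuine subset $\cube{w}{P}_B\subseteq\RR^B$, which --- $P$ being finite, so that $B$ is finite and $\cube{}{P}$ has finitely many cells --- is a union of finitely many axis-parallel parallelopipeds glued along faces. Throughout I treat $\ellinfty{w}$ as the quotient metric arising from the broken-path/chain description already recorded in Section~\ref{section:geometric complexes} (cf. Equation~\eqref{ell-p formula}), now taken with the $\ell^\infty$-metric on each cube, and I write $\ell(\gamma)$ for the $\norm{\cdot}_\infty$-length of a path $\gamma$ in $\RR^B$.

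The first step is the estimate $\norm{\xi-\eta}_\infty\le\ellinfty{w}(\xi,\eta)$ for all $\xi,\eta\in\cube{w}{P}_B$. For any chain from $\xi$ to $\eta$ the successive identified points coincide as points of $\RR^B$, the distance between two points inside a single cube equals their $\norm{\cdot}_\infty$-distance, and one application of the triangle inequality in $(\RR^B,\norm{\cdot}_\infty)$ bounds the total length of the chain below by $\norm{\xi-\eta}_\infty$; passing to the infimum gives the estimate. In particular $\ellinfty{w}$ is positive-definite, hence a bona fide metric. Conversely, the quotient metric never exceeds the intrinsic metric of a single cell, and each cube --- being convex in $\RR^B$ --- has intrinsic $\ell^\infty$-metric equal to the restriction of $\norm{\cdot}_\infty$; hence $\ellinfty{w}$ coincides with $\norm{\cdot}_\infty$ on every closed cube. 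Since $\cube{w}{P}_B$ carries the weak topology of its finite cover by closed cubes, and the two metrics agree on each of them, it follows that $\ellinfty{w}$ and the restriction of $\norm{\cdot}_\infty$ induce one and the same topology on $\cube{w}{P}_B$.

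Next I would identify $\ellinfty{w}$ with the length metric $d_\ell$ that $(\RR^B,\norm{\cdot}_\infty)$ induces on the subset $\cube{w}{P}_B$. The inequality $d_\ell\le\ellinfty{w}$ is immediate: replacing each segment of a broken path $\mbf p$ by the straight line joining its endpoints yields an honest path in $\cube{w}{P}_B$ (the segment stays in the relevant cube) of the same $\ell^\infty$-length, so $d_\ell(\xi,\eta)\le\length{\mbf p}$. For the reverse inequality, take a rectifiable path $\gamma$ in $\cube{w}{P}_B$ and, using compactness of its domain and finiteness of the complex, subdivide the parameter interval so that each resulting sub-arc lies in a single closed cube --- inserting parameter values where $\gamma$ meets a lower-dimensional cell when necessary; then the triangle inequality for $\ellinfty{w}$, the equality of the two metrics on each cube, and additivity of length yield $\ellinfty{w}(\xi,\eta)\le\ell(\gamma)$. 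Taking infima over $\mbf p$ and over $\gamma$ respectively gives $\ellinfty{w}=d_\ell$, which is the first assertion of the lemma. (This identification is essentially the content of the treatment of quotient metrics on polyhedral complexes in \cite{Bridson_Haefliger-nonpositive_curvature}, Ch.~I.5, whose proof uses of the individual cells only that they are complete geodesic spaces whose faces are again cells --- valid verbatim for $\ell^\infty$-parallelopipeds.)

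Finally, completeness and geodesicity follow painlessly: $P$ finite makes $\cube{w}{P}_B$ a finite union of compact parallelopipeds, hence compact in $(\RR^B,\norm{\cdot}_\infty)$; by the topological equivalence just established, $(\cube{w}{P}_B,\ellinfty{w})$ is compact as well, in particular complete and locally compact. Being also a length space, it is geodesic by the Hopf--Rinow theorem (\cite{Bridson_Haefliger-nonpositive_curvature}, I.3.7); alternatively one may invoke Bridson's theorem on shapes directly, with the same remark about the cells as above. The one point above that is more than a routine estimate is the subdivision step --- cutting a rectifiable curve into finitely many arcs each contained in a single closed cube; I expect this to be the main obstacle, to be dispatched by a compactness (Lebesgue-number) argument adapted to the stratification of $\cube{w}{P}_B$ by its cells, and it is precisely the ``care'' alluded to in the introduction when transferring Bridson's machinery to the $\ell^\infty$ setting.
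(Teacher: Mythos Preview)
Your proposal is correct and is essentially what the paper does: the paper gives no proof at all beyond the preceding sentence, which defers the lemma to the treatment of quotient metrics on polyhedral complexes in \cite{Bridson_Haefliger-nonpositive_curvature}, Chapter~I.5, and then closes the statement with a $\square$. Your write-up is precisely an expansion of that citation (as you yourself note), carried out with the obvious adaptation to $\ell^\infty$-cells, so there is nothing to compare.
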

\begin{corollary}\label{cor:ellinfty is a length metric} Let $P$ be a discrete poc set with weight $w$ and basepoint $B\in P^\circ$. If the dual $\cube{}{P}_B$ is locally finite, then $\ellinfty{w}$ is the length metric on $\cube{w}{P}_B$ induced on it from $(\RR^B,\norm{\cdot}_\infty)$, and $\cubeinfty{w}{P}_B$ is a geodesic metric space.\ep
\end{corollary}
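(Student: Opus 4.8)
The plan is to bootstrap from the finite case (the preceding Lemma) by exhausting $\cube{}{P}_B$ with finite subcubings and passing to the limit; the coincidence of the two metrics is, in the end, a purely local matter.

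\emph{The length-metric identity.} Write $d:=\ellinfty{w}$ for the quotient metric and $\hat d$ for the length metric that $(\RR^B,\norm{\cdot}_\infty)$ induces on $\cube{w}{P}_B$. One inequality, $\hat d\le d$, comes from replacing a broken path of small $\ell^\infty$-length by an honest path of the same length: consecutive points of a broken path lie in a common cube of $\cube{w}{P}_B$, a rectangular axis-parallel parallelopiped in $\RR^B$, and the straight segment joining them has $\ell^\infty$-length exactly the $\ell^\infty$-distance of its endpoints. For the reverse inequality, take a rectifiable $\gamma\colon[0,1]\to\cube{w}{P}_B$; its image is compact, hence --- by local finiteness --- meets only finitely many cubes, so $[0,1]$ may be subdivided as $0=t_0<\dots<t_N=1$ with every $\gamma([t_{j-1},t_j])$ inside a single closed cube $C_j$. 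Since $d$ restricted to a cube is dominated by the $\ell^\infty$-metric of that cube, the triangle inequality gives $d(\gamma(0),\gamma(1))\le\sum_j\norm{\gamma(t_j)-\gamma(t_{j-1})}_\infty\le\sum_j\length{\gamma|_{[t_{j-1},t_j]}}=\length{\gamma}$, whence $d\le\hat d$. Equivalently: the quotient metric agrees, on a neighbourhood of each point, with the $\ell^\infty$-metric of the finite subcomplex $\mathrm{star}(v)$ of a nearby vertex $v$, to which the finite Lemma applies; this is the locally finite version of the quotient-metric computation of \cite{Bridson_Haefliger-nonpositive_curvature}, Ch.~I.5.

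\emph{Local compactness and geodesicity.} Local compactness is immediate: by local finiteness the open star of a vertex is a finite union of compact cubes and serves as a neighbourhood of each of its points. For geodesicity I would present $\cube{}{P}_B$ as an increasing union of finite convex subcomplexes. The combinatorial convex hull in the $1$-skeleton of a finite vertex set $S$ is finite: $S$ is separated by only finitely many walls, collected in a finite set $W_0$, and for every wall outside $W_0$ the whole of $S$ --- hence its convex hull --- lies on one fixed side; so the hull lies within combinatorial distance $\card{W_0}$ of any point of $S$, a finite ball by local finiteness. Taking $K_m$ to be the hull of the combinatorial ball of radius $m$ about $B$, filled in with all cubes spanned by its vertices, yields finite convex subcubings with $\bigcup_m K_m=\cube{}{P}_B$; each $K_m$, being a convex subcomplex of a cubing, is itself a finite cubing $\cube{}{Q_m}_{B_m}$ for a finite sub-poc-set $Q_m\subseteq P$, so by the finite Lemma $\cube{w}{Q_m}_{B_m}$ with its $\ellinfty{}$-metric is complete and geodesic. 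The cubical retraction of $\cube{}{P}_B$ onto $K_m$ collapses each cube along the directions whose walls miss $K_m$, hence is nonexpanding for $\norm{\cdot}_\infty$; therefore the metric of $\cube{w}{Q_m}_{B_m}$ is exactly the restriction of $\ellinfty{w}$. Since any two points of $\cube{w}{P}_B$ lie in a common $K_m$, a geodesic inside that $K_m$ is a geodesic of $\cube{w}{P}_B$, and geodesicity follows.

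\emph{Completeness, and the main obstacle.} It remains to see completeness of $(\cube{w}{P}_B,\ellinfty{w})$. One would like to say that a Cauchy --- hence bounded --- sequence is trapped inside one $K_m$ and converges there by compactness of $K_m$; equivalently, that $(\cube{w}{P}_B,\ellinfty{w})$ is the pointed Gromov--Hausdorff limit of the proper geodesic spaces $\cube{w}{Q_m}_{B_m}$ (the inclusions $K_m\hookrightarrow\cube{}{P}_B$, read as relations, are $\varepsilon_m$-approximations on every ball about $B$) and that such limits are proper and geodesic. \textbf{This is the point that needs care}: a bounded region is covered by a finite subcomplex only when the weight does not decay --- for instance if $w$ takes finitely many values, or is bounded below on bounded regions --- which is automatic in the unit case relevant to Theorem~\ref{main theorem} but can fail in general (a summable family of edge weights on a ray already produces a locally finite weighted $\ell^\infty$-cubing isometric to a half-open interval, hence not complete). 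Granting such a bound on $w$, every bounded region lies in some $K_m$, completeness follows from compactness of $K_m$, and --- having the length-metric identity and local compactness in hand --- one may instead finish by the metric Hopf--Rinow theorem (\cite{Bridson_Haefliger-nonpositive_curvature}, I.3.7). This proves the corollary.
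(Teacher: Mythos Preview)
Your approach is essentially the one the paper hints at: the paper offers no proof beyond the sentence ``applying pointed Gromov--Hausdorff convergence to adequately selected finite exhaustions of a locally finite cubing,'' and you carry out precisely this programme (exhaust by finite convex subcomplexes $K_m$, invoke the finite Lemma on each, compare metrics via the nonexpanding cubical retraction). Your direct local argument for the length-metric identity is a welcome addition that does not require the limiting machinery at all.

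More to the point, you have caught something the paper glosses over. Your observation that completeness can fail when the weights are summable along an infinite chain --- a ray with $w(a_n)=2^{-n}$ yields a locally finite weighted cubing isometric to a half-open interval --- is a genuine counterexample to the corollary \emph{as stated}. The paper's one-line justification via pointed Gromov--Hausdorff limits does not rescue this: the finite exhaustions $K_m$ are proper and geodesic, but without a lower bound on $w$ a fixed-radius ball about $B$ need not be contained in any $K_m$, so the pointed limit need not be proper. The corollary is therefore correct only under an additional hypothesis (e.g.\ $w$ bounded below, or taking finitely many values), which is satisfied in every application the paper actually makes --- the unit case and its rational approximations --- but is not explicit in the statement. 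Your proof is correct once this hypothesis is added, and your flagging of the issue is well placed.
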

We emphasize that in the finite case one has $\cube{}{P}_B=\cube{}{P}$, and that in either case the resulting metric is independent of the choice of basepoint $B$ in the cubing.

\subsection{Lower Bound on $\ellinfty{w}$} The following technical lemma places a lower bound on distances in $\cubeinfty{w}{P}$.
\begin{lemma}\label{separation lemma} Let $U,W\in P^\circ$ be vertices of $\cube{w}{P}$ and let $N$ be a finite nested subset of $U\minus W$. Then 
\begin{equation}\label{eqn:separation bound}
	\ellinfty{w}(\rho_wU,\rho_wW)\geq\sum_{a\in N}w(a):=w(N)\,.
\end{equation}
\end{lemma}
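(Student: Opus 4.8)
The plan is to construct a single real-valued function $F$ on $\cubeinfty{w}{P}_B$ that is $1$-Lipschitz for the quotient metric $\ellinfty{w}$ and whose values at $\rho_w W$ and $\rho_w U$ differ by exactly $w(N)$; the estimate then drops out immediately. To build $F$, I would attach to each proper $a\in P$ a ``coordinate'' function $g_a\colon\cube{w}{P}_B\to[0,w(a)]$: up to an additive constant and a sign this is just the coordinate of the $\ell^1$-realization $\rho_w$ dual to the wall $\{a,a^\ast\}$ (so it is defined globally, being the restriction of a coordinate functional on $\ell^1(B)$), normalized so that $g_a(\rho_w V)=w(a)$ whenever $a\in V$, $g_a(\rho_w V)=0$ whenever $a^\ast\in V$ (for $V\in P^\circ$), and $g_{a^\ast}=w(a)-g_a$. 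What matters is the behaviour of $g_a$ on a single cube $Q$ of $\cube{w}{P}_B$ --- an axis-parallel parallelopiped: if $Q$ has zero extent along the wall $\{a,a^\ast\}$ then $g_a$ is constant on $Q$; otherwise $g_a|_Q$ is affine in the one coordinate direction associated to $a$, hence $1$-Lipschitz on $Q$ with respect to $\ell^\infty$.

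The decisive observation is combinatorial. By Lemma~\ref{lemma:cubes and transversality} the walls along which a fixed cube $Q$ has positive extent form a transverse set; since the elements of $N$ are pairwise nested, at most one wall $\{a,a^\ast\}$ with $a\in N$ can be among them. Hence, putting $F:=\sum_{a\in N}g_a$, on every cube $Q$ at most one summand is non-constant, so $F|_Q$ is $g_{a_0}|_Q$ plus a constant for some $a_0\in N$ (or is outright constant), and in either case $F|_Q$ is $1$-Lipschitz for the $\ell^\infty$ metric on $Q$. Now $\ellinfty{w}(x,y)$ is the infimum of the lengths $\length{\mbf{p}}$ of broken paths $\mbf{p}=(x_0,\ldots,x_n)$ from $x$ to $y$ with each consecutive pair $x_{i-1},x_i$ sitting in a common cube; telescoping and using $1$-Lipschitzness of $F$ on each such cube gives $|F(x)-F(y)|\le\sum_{i=1}^n\dist{x_{i-1}}{x_i}=\length{\mbf{p}}$, and passing to the infimum shows $F$ is $1$-Lipschitz for $\ellinfty{w}$.

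It remains only to evaluate $F$ at the two vertices. Because $N\subseteq U\minus W$, every $a\in N$ satisfies $a\in U$ and $a^\ast\in W$, so $F(\rho_w U)=\sum_{a\in N}w(a)=w(N)$ and $F(\rho_w W)=0$; the Lipschitz bound then yields $\ellinfty{w}(\rho_w U,\rho_w W)\ge|F(\rho_w U)-F(\rho_w W)|=w(N)$ (and if $\rho_w U,\rho_w W$ lie in different components of $\cube{w}{P}$ the left-hand side is infinite and there is nothing to prove). The one genuinely delicate step is the middle one: extracting from the nestedness of $N$ the geometric fact that no cube of $\cube{w}{P}_B$ varies along two distinct walls of $N$. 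This is exactly what rescues the $1$-Lipschitzness of $F$ --- for a \emph{transverse} family $N$ the sum $\sum_{a\in N}g_a$ is only $\card{N}$-Lipschitz, since a transverse family of walls can be crossed ``diagonally'' inside a single cube at the cost of a single $w(a)$, which is precisely why the hypothesis must restrict $N$ to be nested.
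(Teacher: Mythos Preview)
Your proof is correct and arrives at the same conclusion as the paper, but the packaging is genuinely different. The paper first establishes the single-wall case --- that $\ellinfty{w}(x,y)\geq w(a)$ whenever $x\in X(a)$ and $y\in X(a^\ast)$ --- by bounding the length of a broken path below by the total variation of the single coordinate $x(a)$; it then observes that a nested subset $N\subseteq U\minus W$ must in fact be a chain $a_1<\ldots<a_n$ (since $U,W$ are both coherent), producing a flag $X(a_1)\subset\cdots\subset X(a_n)$ with $U$ in the innermost piece and $W$ outside the outermost, and iterates the single-wall estimate across this flag. Your approach short-circuits the iteration: you sum the coordinate functions $g_a$ over $a\in N$ at the outset and observe, via Lemma~\ref{lemma:cubes and transversality}, that nestedness of $N$ forces at most one summand to be non-constant on any given cube, so the sum $F$ is itself $1$-Lipschitz on each cube and hence globally. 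This buys you a one-shot argument with no induction along the chain, and it makes the role of nestedness completely transparent (your closing remark about why a transverse $N$ would only give an $\card{N}$-Lipschitz bound is exactly the right contrast). The paper's route, on the other hand, emphasizes the geometric picture of nested halfspace subcomplexes, which feeds directly into the later ball-separation lemma. Both are sound; yours is the tidier analytic version of the same underlying idea.
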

\begin{proof} For any $a\in P$, the sub-complex $X(a)$ of $X=\cube{w}{P}$ induced by the vertex set $V(a)\subset P^\circ$ is itself a cubing. We start by verifying that $\ellinfty{}(x,y)\geq w(a)$ for {\it any} $x\in X(a)$ and $y\in X(a^\ast)$: since the walls $\wall{a}(t)$, $t\in(0,w(a))$ separate $X(a)$ from $X(a^\ast)$ in $X$, every string $\mbf{q}=(x_0,x_1,\ldots,x_n)$ from $x$ to $y$ in $X$ must satisfy 
\begin{equation}
	\length{\mbf{q}}\geq \sum_{i=1}^n |x_i(a)-x_{i-1}(a)|\geq |x_0(a)-x_n(a)|\geq w(a)\,,
\end{equation}
by the triangle inequality.

\medskip
Now, if $N\subset U\minus W$ is nested, then for any $a,b\in N$ we {\it cannot} have $a\leq b^\ast$, since $a,b\in U$ and $U$ is coherent. Neither can we have $a^\ast\leq b$, for $a^\ast,b^\ast\in W$ and $W$ is coherent. We are left with $a\leq b$ or $b\leq a$ for all $a,b\in N$. Thus, when $N$ is finite we may write $N=\{a_1,\ldots,a_n\}$ with $a_1<a_2<\ldots<a_n$, which gives: 
\begin{equation}
	\begin{array}{rcccl}
	\rho_wU&\in& X(a_1)\subset X(a_2)\subset\cdots\subset X(a_n)&&\\[.5em]
	&&X(a_1^\ast)\supset X(a_2^\ast)\supset\cdots\supset X(a_n^\ast) &\ni&\rho_wW.
	\end{array}
\end{equation}
The fact that each $X(a_i)$ and $X(a_j^\ast)$ is an $\ell_\infty$-cubing in its own right allows us to repeatedly apply the preceding argument to conclude 
\begin{equation}
	\ellinfty{w}(\rho_wU,\rho_wW)\geq\sum_{i=1}^nw(a_i)
\end{equation}
as desired.
\end{proof}
An immediate corollary is the following.
\begin{corollary} Let $P$ be a discrete poc set with weight $w$. Then the inclusion map 
\begin{displaymath}
	inc_{_Q}:(Q,\norm{\cdot}_\infty)\to(\cube{w}{P},\ellinfty{w})
\end{displaymath}
is an isometric embedding for every face $Q$ of $\cube{w}{P}$.\ep
\end{corollary}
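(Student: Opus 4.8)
The plan is to prove the two inequalities $\ellinfty{w}(x,y)\le\norm{x-y}_\infty$ and $\ellinfty{w}(x,y)\ge\norm{x-y}_\infty$ for every pair of points $x,y$ lying in a fixed face $Q$ of $\cube{w}{P}$; together these say precisely that $inc_{_Q}$ is an isometry onto its image. The first inequality is the easy one: since $Q$ is a face of $\cube{w}{P}$, the one-segment sequence $\mbf{p}=(x,y)$ is a legitimate broken path, and its length is the distance between $x$ and $y$ measured inside the face $Q$. By construction $Q$ is one of the cubes of $\cube{w}{P}$ (or a face of one), hence a rectangular axis-parallel parallelopiped carrying the metric it inherits from $(\RR^B,\norm{\cdot}_\infty)$, so that distance is exactly $\norm{x-y}_\infty$; passing to the infimum over broken paths in formula~\eqref{ell-p formula} gives $\ellinfty{w}(x,y)\le\norm{x-y}_\infty$.

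For the reverse inequality I would recycle the opening move of the proof of Lemma~\ref{separation lemma}, but keeping track of coordinate differences rather than only of wall weights. Fix a coordinate $c\in B$. If $\mbf{q}=(x_0,\ldots,x_n)$ is any broken path from $x$ to $y$, then each consecutive pair $x_{i-1},x_i$ lies in a common face, which is again a RAP with the trace $\ell^\infty$-metric, so $\dist{x_{i-1}}{x_i}=\norm{x_i-x_{i-1}}_\infty\ge\card{x_i(c)-x_{i-1}(c)}$; summing and applying the triangle inequality yields $\length{\mbf{q}}\ge\card{x_n(c)-x_0(c)}=\card{x(c)-y(c)}$, and hence $\ellinfty{w}(x,y)\ge\card{x(c)-y(c)}$ for every $c\in B$.

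To finish, I would invoke the combinatorial description of faces. By Lemma~\ref{lemma:cubes and transversality} the vertices of $Q$ are obtained from any one of them by flips along a transverse subset $A$, and from the explicit form of $\rho_w$ each such flip alters only the coordinate lying in $\{a,a^\ast\}\cap B$; consequently $Q$ is constant in every coordinate outside a finite set $C\subseteq B$. Therefore, for $x,y\in Q$ one has $\norm{x-y}_\infty=\max_{c\in C}\card{x(c)-y(c)}$, and combining this with the coordinate estimate of the previous paragraph gives $\ellinfty{w}(x,y)\ge\norm{x-y}_\infty$, which together with the easy direction completes the proof.

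I do not expect a serious obstacle here; the only points needing care are the two bookkeeping facts that within a single face (or cube) the piecewise-$\ell^\infty$ metric agrees with the ambient $\ell^\infty$-distance, and that a single face is constant off finitely many coordinate directions. Both are immediate from the way $\cube{w}{P}$ is realized in $\RR^B$ (cf.\ the proof of Proposition~\ref{properties of the weighted dual}) and from Lemma~\ref{lemma:cubes and transversality}, so the corollary reduces to assembling the two inequalities above.
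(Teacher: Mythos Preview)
Your proposal is correct and matches the paper's intended argument. The paper gives no explicit proof (the corollary is marked with the end-of-proof symbol immediately), but ``immediate'' here means precisely what you wrote: the upper bound comes from the one-segment broken path $(x,y)$ inside $Q$, and the lower bound is the coordinate-projection inequality $\length{\mbf{q}}\ge |x(c)-y(c)|$ established in the first paragraph of the proof of Lemma~\ref{separation lemma}, applied coordinate by coordinate and then maximized. Your step~3 (finiteness of the active coordinate set $C$) is harmless but not strictly needed, since $\ellinfty{w}(x,y)\ge |x(c)-y(c)|$ for every $c\in B$ already yields $\ellinfty{w}(x,y)\ge\sup_{c\in B}|x(c)-y(c)|=\norm{x-y}_\infty$.
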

Recall that a geodesic in a metric space $(X,d)$ from a point $x$ to a point $y$ is an isometric embedding $\gamma:[0,d(x,y)]\to X$ satisfying $\gamma(0)=x$ and $\gamma(d(x,y))=y$. A discretized version of this notion for our purposes is the following definition.
\begin{definition} We say that a string $\mbf{p}$ from $x$ to $y$ in $\cube{w}{P}$ is {\it geodesic}, if $\length{\mbf{p}}=\ellinfty{w}(x,y)$. 
\end{definition}
The proof of the last lemma produces an obvious lower bound on the length of a geodesic between two vertices of $\cube{w}{P}$.
\begin{corollary}\label{lower bound on distance} Let $P$ be a discrete poc set with weight $w$. Then the bound
\begin{equation}
	\ellinfty{w}(\rho_wU,\rho_wW)\geq
	\max\set{w(N)}{N\text{ is a chain in }U\minus W}
\end{equation}
holds for any $U,W\in P^\circ$.\ep
\end{corollary}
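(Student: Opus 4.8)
The plan is to read the corollary straight off Lemma~\ref{separation lemma}. The observation that makes this work is that, \emph{inside} $U\minus W$, the notions of ``nested subset'' and ``chain'' coincide. On the one hand any chain $N\subseteq U\minus W$ --- a subset totally ordered by the partial order of $P$ --- is a nested subset of $P$, since two comparable halfspaces $a\leq b$ realize the first of the nesting relations in \eqref{nesting relations}. On the other hand, the proof of Lemma~\ref{separation lemma} already records the converse: for $a,b\in U\minus W$ coherence of $U$ rules out $a\leq b^\ast$ and coherence of $W$ rules out $a^\ast\leq b$, leaving $a\leq b$ or $b\leq a$, so every nested subset of $U\minus W$ is a chain. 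Consequently Lemma~\ref{separation lemma}, applied to each finite chain $N\subseteq U\minus W$, gives precisely $\ellinfty{w}(\rho_wU,\rho_wW)\geq w(N)$.

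Taking the supremum over all finite chains $N\subseteq U\minus W$ now yields the asserted inequality with the right-hand side replaced by that supremum. To see that this already covers possibly infinite chains, note that an arbitrary chain $N$ is the directed union of its finite subchains and $w$ is a sum of non-negative weights, so $w(N)=\sup\set{w(N')}{N'\subseteq N\text{ finite}}$; hence the bound persists for all chains. That the supremum is in fact a maximum is automatic in the only non-vacuous case: if $\ellinfty{w}(\rho_wU,\rho_wW)<\infty$ then $U$ and $W$ lie in one connected component of $\cube{}{P}$, hence are almost equal, so $U\minus W$ is finite and carries only finitely many chains; and if $\ellinfty{w}(\rho_wU,\rho_wW)=\infty$ there is nothing to prove.

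I do not anticipate a real obstacle: the statement is a repackaging of Lemma~\ref{separation lemma} via the nested--chain dictionary above, and the only mildly delicate point --- the finite-versus-infinite chain bookkeeping --- evaporates the moment one restricts attention to a single component of $\cube{}{P}$, where every separator $U\minus W$ is finite. (Since a geodesic broken path between the two vertices has length equal to $\ellinfty{w}(\rho_wU,\rho_wW)$, the same bound is simultaneously a lower bound on such a path, which is the form in which it will be used.)
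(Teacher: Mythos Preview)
Your proposal is correct and matches the paper's approach: the paper presents this corollary as an immediate consequence of Lemma~\ref{separation lemma} (no proof is given beyond the \ep), and your write-up simply unpacks that immediacy --- including the nested-equals-chain observation already recorded in the proof of Lemma~\ref{separation lemma}. The finite/infinite bookkeeping you add is harmless extra care; since the intended use is for vertices in a common component of $\cube{}{P}$, the separator $U\minus W$ is finite anyway.
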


\subsection{Constructing Geodesics in Unit $\ell_\infty$-cubings} Assume for now that all the weights on $P$ are unity. We now consider a special family of strings introduced in \cite{Niblo_Reeves-biautomatic}.
\begin{definition} A string $(U_0,\ldots,U_m)$ of vertices in $\cube{w}{P}$ is said to be a {\it normal cube path}, if
\begin{equation}
	U_i\minus U_{i+1}=\min(U_i\minus U_m)
\end{equation}
for all $i=0,\ldots,m-1$.
\end{definition}
It is easy to verify that $\min(U\minus V)$ is a transverse subset of $P$ for any pair of vertices $U,V\in P^\circ$. Thus, a normal cube path from $U$ to $V$ has the form:
\begin{equation}
	U_0=U\,,\;
	U_1=\flip{\min(U_0\minus V)}{U_0}\,,\;\ldots\;,\,
	U_{k+1}=\flip{\min(U_k\minus V)}{U_k}\,,\;\ldots
\end{equation}
and in particular:
\begin{itemize}
	\item Normal cube paths are taut;
	\item Each consecutive pair of vertices along a normal cube path are at unit distance from each other (as they span the diagonal of an embedded unit $\ell_\infty$-cube);
	\item Thus, the length $\length{\mbf{p}}$ of a normal cube path $\mbf{p}$ as in the definition above equals $m$.
\end{itemize}
These two properties hint at the possibility that a normal cube path is, in fact an $\ell_\infty$-geodesic. In any case, the length of such a path provides one with an upper bound on the distance between its endpoints. We use normal cube paths to prove the following:
\begin{proposition}\label{length of normal cube path} For every $U,W\in P^\circ$ one has the formula
\begin{equation}\label{eqn:unit ellinfty formula}
	\ellinfty{}(\rho U,\rho W)=\max\set{|N|}{N\subseteq U\minus W\text{ is nested}}.
\end{equation}
\end{proposition}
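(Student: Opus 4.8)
The plan is to prove the two inequalities of~\eqref{eqn:unit ellinfty formula} separately. For the lower bound I would invoke Corollary~\ref{lower bound on distance} almost directly, after recording --- exactly as is done inside the proof of Lemma~\ref{separation lemma} --- that coherence of $U$ and of $W$ forces every nested subset of $U\minus W$ to be a chain: for a nested pair $a,b\in U\minus W$, coherence of $U$ rules out $a\leq b^\ast$ and coherence of $W$ rules out $a^\ast\leq b$, leaving only $a\leq b$ or $b\leq a$. Hence the right-hand side of~\eqref{eqn:unit ellinfty formula} equals $\max\{|N|\mid N\text{ is a chain in }U\minus W\}$, and with unit weights Corollary~\ref{lower bound on distance} says precisely that $\ellinfty{}(\rho U,\rho W)$ dominates this. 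Throughout I tacitly assume $U\minus W$ is finite, the only case in which $\rho U$ and $\rho W$ lie in a common component of $\cube{}{P}$ and the asserted equality has content.

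For the upper bound I would exhibit an explicit broken path of the required length, namely the normal cube path from $U$ to $W$: set $U_0=U$ and let $U_{i+1}$ be obtained from $U_i$ by flipping all of $\min(U_i\minus W)$. The bookkeeping identity $U_{i+1}\minus W=(U_i\minus W)\minus\min(U_i\minus W)$ shows the ``layers'' $L_i:=\min(U_{i-1}\minus W)$, $i=1,\ldots,m$, are nonempty, pairwise disjoint, and partition the finite set $U\minus W$, so the sequence terminates after $m\leq|U\minus W|$ steps at $W$ and is a genuine normal cube path. One checks from coherence of $U_i$ and $W$ that $\min(U_i\minus W)$ is a transverse subset of $\min(U_i)$, so each step is a legitimate flip and $U_i,U_{i+1}$ are antipodal vertices of an honest cube of $\cube{}{P}$; since that cube is isometric to a unit $\ell^\infty$-cube and embeds isometrically (the corollary on isometric embeddings of faces stated just after Lemma~\ref{separation lemma}), $\ellinfty{}(\rho U_i,\rho U_{i+1})=1$ for every $i<m$, so $\ellinfty{}(\rho U,\rho W)\leq\length{\mbf{p}}=m$.

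It remains to show $m\leq\max\{|N|\mid N\subseteq U\minus W\text{ nested}\}$, which is the heart of the matter: it is the familiar fact that repeatedly stripping minimal elements off a finite poset takes exactly as many rounds as its longest chain has elements. Concretely I would build a chain downward: pick $a_m\in L_m$; given $a_{i+1}\in L_{i+1}=\min(U_i\minus W)$, observe that $a_{i+1}$ lies in $U_{i-1}\minus W$ but is not minimal there (else it would lie in $L_i$), so choose $b<a_{i+1}$ with $b\in U_{i-1}\minus W$; minimality of $a_{i+1}$ in $U_i\minus W$ then forces $b\in L_i$, and we put $a_i:=b$. The resulting $a_1<\cdots<a_m$ is a nested set of size $m$ inside $U\minus W$. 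Combining this with the two earlier inequalities sandwiches $\ellinfty{}(\rho U,\rho W)$, $m$, and the maximum together, proving the formula --- and, as a byproduct, that the normal cube path is an $\ell^\infty$-geodesic.

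I expect the genuinely substantive point to be this last combinatorial step, the existence of the length-$m$ chain; everything else is either the already-available lower bound or routine verification that normal cube paths behave as advertised in~\cite{Niblo_Reeves-biautomatic}. The one technical subtlety to keep an eye on is obtaining $\ellinfty{}(\rho U_i,\rho U_{i+1})=1$, not merely $\leq 1$, which is exactly where one uses that faces of $\cube{}{P}$ embed \emph{isometrically}, rather than just $1$-Lipschitzly, into the piecewise-$\ell^\infty$ metric.
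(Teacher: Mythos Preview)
Your proposal is correct and follows essentially the same route as the paper: the lower bound comes from Corollary~\ref{lower bound on distance}, the upper bound from the normal cube path having length $m$, and the key combinatorial step---building a chain $a_1<\cdots<a_m$ with $a_i\in L_i$ by descending through the layers---is carried out in the paper exactly as you describe. One small remark: the ``technical subtlety'' you flag about needing $\ellinfty{}(\rho U_i,\rho U_{i+1})=1$ rather than $\leq 1$ is not actually required for the upper bound, since $\length{\mbf{p}}=m$ follows directly from the definition of length (each segment is a diagonal of a unit $\ell^\infty$-cube, measured \emph{in that cube}), and then $\ellinfty{}\leq\length{\mbf{p}}$ is just the definition of the quotient metric; the isometric embedding of faces is not needed here.
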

\begin{proof} Let $\mbf{p}=(U_0,\ldots,U_m)$ be the normal cube path from $U_0=U$ to $U_m=W$. In $\cubeinfty{}{P}$ this implies $\length{\mbf{p}}=m$. Setting $A_{i+1}=U_i\minus U_{i+1}$ for $i=0,\ldots,m-1$ we have:
\begin{enumerate}
	\item $A_i$ is transverse, for all $i=1,\ldots,m$;
	\item $U\minus W=\bigcup_{i=1}^m A_i$ and this union is disjoint.
\end{enumerate}
In particular, any nested set $N\subseteq U\minus W$ intersects every $A_i$ in at most one element, and therefore satisfies $|N|\leq m$.

\medskip
Now for any $i>1$ and any $a_i\in A_i$ we observe that $a_i\in U_{i-1}\minus U_m$. At the same time, by the definition of a normal cube path,
\begin{equation}
	A_{i-1}=U_{i-1}\minus U_i=\min(U_{i-1}\minus U_m)
\end{equation}
and we conclude that there has to be some $a_{i-1}\in A_{i-1}$ with $a_{i-1}<a_i$. Starting with any $a_m\in A_m$ we thus obtain at least one chain $N=\{a_1,\ldots,a_m\}$ with $a_i\in A_i$ for all $i$. Thus, $U\minus W$ contains a nested subset of cardinality $m$. We conclude:
\begin{equation}
	\ellinfty{}(U,W)\leq\length{\mbf{p}}=\max\set{|N|}{N\text{ is a nested subset of }U\minus W}.
\end{equation}
The reverse inequality was established previously, in Corollary~\ref{lower bound on distance}, so we are done.
\end{proof}
As a by-product of this proof we also obtain the following corollary.
\begin{corollary}\label{normal implies geodesic} In a unit piecewise-$\ell_\infty$ cubing, normal cube paths are geodesic strings.
\end{corollary}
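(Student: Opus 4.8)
The plan is to read the statement off directly from the proof of Proposition~\ref{length of normal cube path}, essentially as its equality case. Fix $U,W\in P^\circ$ and let $\mathbf{p}=(U_0,\ldots,U_m)$ be the normal cube path from $U_0=U$ to $U_m=W$. As already noted in the discussion preceding Proposition~\ref{length of normal cube path}, each consecutive pair $U_i,U_{i+1}$ spans the diagonal of an embedded unit $\ell^\infty$-cube and is therefore at distance $1$ in $\cubeinfty{}{P}$; since $\Lambda$ is the sum of the lengths of the $m$ segments of $\mathbf{p}$, this gives $\Lambda(\mathbf{p})=m$.

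Next I would invoke Proposition~\ref{length of normal cube path} itself, which asserts $\ellinfty{}(\rho U,\rho W)=\max\{\,|N|\ :\ N\subseteq U\minus W\text{ nested}\,\}$, together with the fact---extracted from the proof of that proposition---that this maximum equals $m$. Indeed, setting $A_{i+1}=U_i\minus U_{i+1}$ one has that the $A_i$ are transverse and partition $U\minus W$, so every nested $N$ meets each $A_i$ at most once, forcing $|N|\le m$; conversely, the normality identity $A_{i-1}=\min(U_{i-1}\minus U_m)$ allows one to build, inductively from any $a_m\in A_m$ downwards, a chain $a_1<\cdots<a_m$ with $a_i\in A_i$, which is a nested subset of $U\minus W$ of size $m$. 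Hence $\ellinfty{}(\rho U,\rho W)=m$.

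Combining the two computations yields $\Lambda(\mathbf{p})=m=\ellinfty{}(\rho U,\rho W)$, which is by definition the assertion that $\mathbf{p}$ is a geodesic broken path; as $U$ and $W$ were arbitrary, every normal cube path in a unit piecewise-$\ell^\infty$ cubing is geodesic. I do not expect any real obstacle here: all the work was already done in Proposition~\ref{length of normal cube path}, and the corollary merely records that the upper bound on $\ellinfty{}(\rho U,\rho W)$ furnished by the length of the normal cube path coincides with the lower bound of Corollary~\ref{lower bound on distance}, so the path must be length-minimizing. The only point needing a moment's care is to be sure that the length of a normal cube path between its own endpoints is genuinely $m$ and cannot be a priori smaller---but this is immediate from additivity of $\Lambda$ and the fact that each of the $m$ steps traverses a unit-distance cube diagonal.
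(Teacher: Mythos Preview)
Your proposal is correct and matches the paper's approach exactly: the paper presents this corollary without a separate proof, stating only that it is ``a by-product of this proof'' (i.e., the proof of Proposition~\ref{length of normal cube path}), and what you have written is precisely the unpacking of that by-product. The key observation---that $\Lambda(\mathbf{p})=m$ coincides with the value of $\ellinfty{}(\rho U,\rho W)$ computed in the proposition---is all that is needed.
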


\subsection{Subdivisions of $\ell_p$ Cubings}\label{section:subdivision} We are looking for an operation on poc sets that would result in subdividing each cube of the dual cubing into a `grid' of smaller cubes.

\medskip
\begin{definition}[Refinement] We will say that a morphism of poc sets $f:\tilde P\to P$ is a {\it refinement map} -- and that $\tilde P$ is a refinement of $P$---if $f\inv(\minP)=\{\minP\}$, and for any proper pair $p,q\in P$ one has:
\begin{enumerate}
	\item $f\inv(p)$ is a finite chain in $\tilde P$,
	\item for any $\tilde p\in f\inv p$ and $\tilde q\in f\inv q$ one has $\tilde p<\tilde q\IFF p<q$.
\end{enumerate}
\end{definition}

\medskip
We will now show that this notion of refinement produces the correct notion of subdivision in the dual. The idea is, roughly, that---since each cube $Q$ of $\cube{}{P}$ is characterized by a transverse set of walls (Lemma~\ref{lemma:cubes and transversality})---if each of these walls were to be replicated into a linear sub poc set (in $\tilde P$), then the dual of the union of these linear refinements is the cartesian product of subdivided intervals; i.e., a subdivided cube (see Examples~\ref{Cartesian products} and \ref{dual of linear}, respectively).
\begin{proposition}[Subdivision Lemma]\label{subdivision} Let $f:\tilde P\to P$ be a refinement of a discrete poc set $P$ and let $w,\tilde w$ be weights on $P$ and on $\tilde P$ respectively, satisfying $w(a)=\sum_{c\in f\inv(b)}\tilde w(c)$ for all $a\in P$. Then for any choice of $B\in P^\circ$, setting $\tilde{B}=f\inv(B)\in \tilde P^\circ$ (see R	emark~\ref{remark:dual maps}) there is a bijection $F$ of $\cube{w}{P}_B$ onto $\cube{\tilde w}{\tilde P}_{\tilde B}$ with the following properties:
\begin{enumerate}
	\item $F$ extends the dual map $f^\circ:P^\circ\to\tilde P^\circ$ given by $f^\circ(U)=f\inv(U)$;
	\item $F$ is an isometry of $\cubeone{w}{P}$ onto $\cubeone{\tilde w}{\tilde P}$ (in particular, $F$ is a median isomorphism);
	\item For each proper $a\in P$, if one writes $f\inv(a)=\{c_1,\ldots,c_n\}$, $n=n(a)$, with $c_1<\ldots< c_n$ then there are reals $0<t_1<\ldots<t_n<w(a)$ such that $F(\half{a}(t_k))=\half{c_k}(\tfrac{\tilde w(c_k)}{2})$ for all $k\in\{1,\ldots,n\}$;
	\item $F$ is an isometry of $\cubep{w}{P}_B$ onto $\cubep{\tilde w}{\tilde P}_B$ for any $p\in[1,\infty]$.
\end{enumerate}
\end{proposition}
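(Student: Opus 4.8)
The plan is to exhibit $F$ as the restriction of an explicit piecewise-affine ``staircase'' map between the $\ell^1$-realizations, and then read all four properties off the geometry of staircases. First I would check that $f^\circ\colon U\mapsto f\inv(U)$ carries $P^\circ$ into $\tilde P^\circ$: $\ast$-equivariance and order-preservation of $f$ make $f\inv(U)$ a coherent $\ast$-selection whenever $U$ is one. Since $f\inv$ commutes with symmetric differences and $f\inv(U\symplus B)$ is finite whenever $U\symplus B$ is (each fibre $f\inv(p)$ being a finite chain), $f^\circ$ restricts to an injection of the vertex set of $\cube{}{P}_B$ into that of $\cube{}{\tilde P}_{\tilde B}$. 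Next, for a proper wall $\{p,p^\ast\}$ write $f\inv(p)=\{\tilde p_1<\dots<\tilde p_n\}$, $n=n(p)$, put $s_j=\sum_{i\le j}\tilde w(\tilde p_i)$ (so $s_0=0$, $s_{n}=w(p)$), and let $\Phi\colon\RR^B\to\RR^{\tilde B}$ be the coordinate-wise map that replaces the coordinate $x(p)\in[0,w(p)]$ by the vector of partial fillings $\bigl(\min(\tilde w(\tilde p_i),\max(0,x(p)-s_{i-1}))\bigr)_{i=1}^{n}$ and leaves the $\maxP$-coordinate alone. Setting $F=\rho_{\tilde w}\inv\circ\Phi\circ\rho_w$, a direct computation gives $\Phi(\rho_wU)=\rho_{\tilde w}(f\inv(U))$, which is property (1).

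For (2), on each coordinate the staircase map is an $\ell^1$-isometry onto its image, so $\norm{\Phi x-\Phi y}_1=\norm{x-y}_1$ for all $x,y$ in the box $\prod_p[0,w(p)]$ containing $\cube{w}{P}_B$; since the $\ell^1$-realizations carry the trace of these norms (Proposition~\ref{properties of the weighted dual}(2)), $F$ will be an $\ell^1$-isometry once we know it is onto $\cube{\tilde w}{\tilde P}_{\tilde B}$, and an isometry between median metric spaces is automatically a median isomorphism. By Lemma~\ref{lemma:cubes and transversality} together with Examples~\ref{Cartesian products} and~\ref{dual of linear}, the $\Phi$-image of the cube of $\cube{}{P}_B$ at a vertex $U$ spanned by a transverse $A\subseteq\min(U)$ is exactly the product, based at $\rho_{\tilde w}(f\inv(U))$, of the subdivided intervals $[0,w(a)]$ dual to the chains $f\inv(a)$, $a\in A$ --- a subcomplex of $\cube{}{\tilde P}_{\tilde B}$, refinement condition (2) being what keeps the fibres over distinct elements of $A$ mutually transverse. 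So the heart of surjectivity is: every point $\tilde y$ of $\cube{\tilde w}{\tilde P}_{\tilde B}$ lies in one such image.

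This surjectivity step is the one I expect to be the main obstacle. The key observation is that coherence of an ultra-filter forces it to restrict to an ``up-set'' of each chain $f\inv(p)\cup f\inv(p^\ast)$; consequently, on each wall the point $\tilde y$ sits on a staircase through a well-defined position $t(p)\in[0,w(p)]$, equal to $0$ or $w(p)$ for all but finitely many walls. Using refinement condition (2) and coherence one then checks that the set $A$ of walls on which $t(p)$ is not an endpoint is transverse in $P$ (two nested such walls would contradict the up-set property), and that rounding each such $t(p)$ to an endpoint produces a genuine vertex $U$ of $\cube{}{P}_B$ with $A\subseteq\min(U)$; then $\tilde y=\Phi(x)$ for the point $x$ with coordinates $t(p)$, which lies in the cube of $\cube{}{P}_B$ at $U$ spanned by $A$. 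This gives (2), and transporting Proposition~\ref{properties of the weighted dual}(3)--(4) along $F$ confirms the median assertion.

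Property (3) then falls out: under $\Phi$ the halfspace $\half{p}(t)$ with $s_{k-1}<t<s_k$ is carried to the halfspace cut by $\wall{\tilde p_k}(t-s_{k-1})$, so $t_k:=s_{k-1}+\tfrac12\tilde w(\tilde p_k)$ satisfies $0<t_1<\dots<t_n<w(p)$ and $F(\half{p}(t_k))=\half{\tilde p_k}\!\bigl(\tfrac12\tilde w(\tilde p_k)\bigr)$. For (4) recall, via Corollary~\ref{cor:ellinfty is a length metric} and the broken-path formula~\eqref{ell-p formula}, that $\ellinfty{w}$ is the $\norm{\cdot}_\infty$-length metric; the point is that $\Phi$ preserves $\ell^\infty$-lengths of paths, since within each fibre it moves at most one staircase coordinate at a time, at exactly the speed at which $x(p)$ moves. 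Concretely, $F$ refines a broken path of $\cube{w}{P}$ into a broken path of $\cube{\tilde w}{\tilde P}$ of the same length --- replacing each segment lying in a face by its monotone staircase through the corresponding subdivided cube --- and $F\inv$ does the reverse, so the two length metrics agree and $F$ is an $\ell^\infty$-isometry. (If the bookkeeping in the surjectivity step feels heavy for infinite $P$, one may first reduce to finite $P$, every assertion concerning only broken paths and finite subcomplexes, and then induct on $\sum_{p}(n(p)-1)$, reducing to a single binary wall split where each step above is transparent.)
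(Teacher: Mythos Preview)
Your argument is correct and follows a genuinely different route from the paper's. The paper proceeds abstractly: it fixes the values $t_k=s_{k-1}+\tfrac{1}{2}\tilde w(\tilde p_k)$ from the outset, lets $\hsm{H}$ be the resulting collection of open halfspaces $\half{}(p,k):=X\cap\half{p}(t_k)$ of $(X,\ellone{w})$, observes that $\hsm{H}$ is a sub poc set of $\power{X}$ naturally isomorphic to $\tilde P$ (the refinement axioms are exactly what is needed for this), and then invokes the duality theorem of Chatterji--Drutu--Haglund (a median space is isometric to the weighted dual of a generating halfspace system) to conclude that $\cubeone{w}{P}$ and $\cubeone{\tilde w}{\tilde P}$ are isometric as median spaces. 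Property (4) is dispatched in a single sentence: ``the piecewise-$\ell^\infty$ distance is preserved because the combinatorial information --- the links of the cubical faces --- is preserved.''

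You instead build $F$ explicitly as a coordinatewise staircase and verify everything by direct computation. This buys you a concrete formula for $F$, a self-contained argument that does not appeal to the external machinery of \cite{Chatterji_Drutu_Haglund-measured_spaces_with_walls}, and a considerably more honest treatment of (4): your broken-path argument --- within each fibre exactly one staircase coordinate moves at a time, at the speed of $x(p)$ --- genuinely explains why $\ell^\infty$-lengths are preserved in both directions, whereas the paper's one-liner leaves the passage from ``same links'' to ``same $\ell^\infty$ quotient metric'' to the reader. The cost is the surjectivity bookkeeping you flagged, but your outline (coherence forces ultra-filters to restrict to up-sets on each chain $f\inv(p)$; refinement axiom (2) then forces the set of walls carrying a fractional coordinate to be transverse in $P$; rounding yields the required vertex $U$ with $A\subseteq\min(U)$) is correct. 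The two approaches converge at property (3), where your $t_k$ are exactly the values the paper writes down at the start of its proof; your induction-on-$\sum_p(n(p)-1)$ alternative is also perfectly sound and is arguably the cleanest way to avoid the bookkeeping altogether.
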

\begin{proof} Let $X=\cube{w}{P}_B$ and $\tilde X=\cube{\tilde w}{\tilde P}_{\tilde B}$. For each $a\in P$, writing $f\inv(a)=\{c_1,\ldots,c_n\}$ as above, set $t_1=\tfrac{\tilde w(c_1)}{2}$ and $t_{k+1}=\sum_{i=1}^k\tilde w(c_i)+\tfrac{\tilde w(c_{k+1})}{2}$ for $1\leq k<n$. Let $\hsm{H}$ denote the set of half-spaces of $(X,\ellone{w})$ arising as $\half{}(a,k):=X\cap\half{a}(t_k)$ for all $a\in P$ and $c_k\in f\inv(a)$ (as above), augmented with $\{\varnothing,X\}$, to make it into a sub poc set of $\power{X}$ with respect to inclusion and the complementation operator $a\mapsto X\minus\cl{a}$. Observe that the map sending each $\half{}(a,k)$ to $a$ is a refinement map factoring as the composition of $f:\tilde P\to P$ over the isomorphism sending each $\half{}(a,k)\in\hsm{H}$ to the appropriate $c_k\in\tilde P$. In particular, the poc sets $\tilde P$ and $\hsm{H}$ have the same dual median space. Since $\hsm{H}$ is a half-space system in the median space $(X,\ellone{w})$, its dual median space is isometric to $(X,\ellone{w})$ (see~\cite{Chatterji_Drutu_Haglund-measured_spaces_with_walls}, Theorem 5.12 and Lemma 3.12). This takes care of the first three requirements.

In order to verify the fourth property of $F$, we observe that, indeed, $F$ induces by pullback a cartesian subdivision of each cube of $\cube{w}{P}_B$. Any such subdivided cube is isometric, as a piecewise-$\ell_p$ cubical complex to the original cube without the subdivision. Since no two faces of $\cube{\tilde w}{\tilde P}_{\tilde B}$ get identified whose preimages under $F$ were not identified in $\cube{w}{P}_B$, we conclude that $F$ is an isometry of piecewise-$\ell_p$ complexes.  
\end{proof}

\section{Deforming Piecewise-$\ell_p$ Cubings}
We begin with a rather coarse estimation of the Gromov--Hausdorff distance between $\ell_p$ cubings sharing the same combinatorial structure:
\begin{lemma}\label{ellinfty deformation GH distance bound} Fix $p\in[1,\infty]$. Let $P$ be a finite poc set and $B\in P^\circ$ be a basepoint. Let $u,w$ be non-degenerate weights on $P$. Then the pair of spaces $X=\cubep{u}{P}_B$ and $Y=\cubep{w}{P}_B$ admits an $\epsilon$-approximation for any $\epsilon>\norm{u-w}_1$.
\end{lemma}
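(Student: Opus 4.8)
The plan is to realise $R$ as the graph of the canonical homeomorphism
$$F:=\rho_w\circ\rho_u\inv\colon X\lto Y,\qquad X=\cubeinfty{u}{P}_B,\quad Y=\cubeinfty{w}{P}_B,$$
which makes sense because $X$ and $Y$ are geometric realisations of the \emph{same} combinatorial complex $\cube{}{P}_B$, differing only by the coordinatewise rescaling $x\mapsto\bigl(\tfrac{w(a)}{u(a)}x(a)\bigr)_{a}$ of $\RR^B$. Then $\pi_X(R)=X$ and $\pi_Y(R)=Y$, so requirement (1) in the definition of an $\epsilon$-approximation holds trivially, and everything reduces to the metric estimate
$$\bigl|\ellinfty{u}(x,x')-\ellinfty{w}(Fx,Fx')\bigr|\ \leq\ \norm{u-w}_1\qquad(x,x'\in X),$$
which suffices since $\epsilon>\norm{u-w}_1$. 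Because $u$ and $w$ enter symmetrically (with $F$ replaced by $F\inv=\rho_u\circ\rho_w\inv$), it is enough to prove the one-sided inequality $\ellinfty{w}(Fx,Fx')\leq\ellinfty{u}(x,x')+\norm{u-w}_1$.

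For this I would fix a $\ellinfty{u}$-geodesic $\gamma$ from $x$ to $x'$ (available by Corollary~\ref{cor:ellinfty is a length metric}) and compare the piecewise-$\ell^\infty$ lengths of $\gamma$ and of the path $F\gamma$ from $Fx$ to $Fx'$. Writing $c_a(t)$ for the $a$-th coordinate of $\rho_u(\gamma(t))$, the $a$-th coordinate of $\rho_w(F\gamma(t))$ is $\tfrac{w(a)}{u(a)}c_a(t)$; combining the elementary bound $\tfrac{w(a)}{u(a)}|c_a'|\leq|c_a'|+\bigl(\tfrac{w(a)-u(a)}{u(a)}\bigr)^{\!+}|c_a'|$ with subadditivity of $\sup$ and integrating, and writing $\length{\cdot}$ for length measured in the relevant piecewise-$\ell^\infty$ metric ($\ellinfty{w}$ on the left, $\ellinfty{u}$ on the right), one gets
$$\length{F\gamma}\ \leq\ \length{\gamma}\ +\ \sum_{a\in B}\Bigl(\tfrac{w(a)-u(a)}{u(a)}\Bigr)^{\!+}V_a(\gamma),$$
where $V_a(\gamma)$ is the total variation of $c_a$. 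Since the coordinates of $\rho_u(X)$ lie in $[0,u(a)]$, if $\gamma$ is chosen \emph{monotone in every coordinate} then $V_a(\gamma)=|x(a)-x'(a)|\leq u(a)$, so the correction term is at most $\sum_{a\in B}(w(a)-u(a))^{+}\leq\norm{u-w}_1$, and the desired bound follows from $\ellinfty{w}(Fx,Fx')\leq\length{F\gamma}$ together with $\length{\gamma}=\ellinfty{u}(x,x')$.

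It remains to produce a coordinatewise-monotone $\ellinfty{u}$-geodesic between any two points, and this is the technical core. I would proceed in two steps. First, replace $\gamma$ by $\pi\circ\gamma$, where $\pi$ is the gate projection of the median space $(X,\ellone{u})$ onto the convex interval $I(x,x')=\set{y\in X}{y(a)\in[\min(x(a),x'(a)),\max(x(a),x'(a))]\text{ for all }a\in B}$ — the identification of $I(x,x')$ with this box of coordinate constraints uses that $\rho_u$ is an isometric median embedding into $\ell^1(B)$ (Proposition~\ref{properties of the weighted dual}). The map $\pi$ is cubical — on each cube it acts as a coordinatewise clamp onto $I(x,x')$'s coordinate ranges, followed by collapse of the coordinates trapped behind $\wall{a}$ — hence $1$-Lipschitz for the piecewise-$\ell^\infty$ metric; so $\pi\circ\gamma$ is still a $\ellinfty{u}$-geodesic, now confined to $I(x,x')$, each of whose coordinates already stays in an interval of length $\leq u(a)$. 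Second, exploit that $I(x,x')$ is a \emph{cubical interval} — every wall of it separates $x$ from $x'$ — to reroute the geodesic so that each coordinate runs monotonically from $x(a)$ to $x'(a)$: concretely, among the $\ellinfty{u}$-geodesics from $x$ to $x'$ inside $I(x,x')$, one of minimal $\ell^1_u$-length can have no backtracking coordinate.

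I expect this monotonisation, together with the verification that the gate projection is cubical (hence nonexpansive for the piecewise-$\ell^\infty$ metric), to be the only delicate points; the length comparison and the reduction of requirement (2) to it are routine bookkeeping. Running the same argument with the roles of $u$ and $w$ (and of $F$, $F\inv$) interchanged yields $\ellinfty{u}(x,x')\leq\ellinfty{w}(Fx,Fx')+\norm{u-w}_1$, and the two bounds together complete the proof.
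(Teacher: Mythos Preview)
Your approach is correct and parallels the paper's, both resting on the observation that pushing a geodesic broken path through the canonical rescaling $F=\rho_w\circ\rho_u^{-1}$ increases its $\ell^\infty$-length by at most $\norm{u-w}_1$. The one structural difference is in the choice of relation: you take $R$ to be the exact graph of $F$, whereas the paper first refines $P$ so that every cube has diameter at most $\delta$ and then declares $(x,y)\in R$ whenever $\rho_u^{-1}(x)$ and $\rho_w^{-1}(y)$ lie in a common cube of the refinement --- this looser relation is automatically surjective and costs only an extra $2\delta$ in the distortion bound, which is harmless since $\delta$ is arbitrary. On the core length estimate your treatment is actually the more scrupulous of the two: the paper simply asserts the bound for taut broken paths (a claim that is false for \emph{arbitrary} taut paths, which may cross a wall repeatedly; it is only needed, and only holds, for near-geodesic ones), while you correctly isolate coordinatewise monotonicity as the crux and sketch how to manufacture it via the gate projection onto $I(x,x')$ followed by $\ell^1$-minimisation among $\ell^\infty$-geodesics. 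That second step is the only place still wanting justification; a clean way to close it is to note that normal cube paths --- and their rescaled limits, as produced later in the Approximation Lemma --- are simultaneously $\ell^1$- and $\ell^\infty$-geodesics, hence monotone in every coordinate.
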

\begin{proof} Denote $X=\cubep{u}{P}$, $Y=\cubep{w}{P}$, $\epsilon=\norm{u-w}_1$ and let $\delta>0$. 
Without loss of generality we may assume that the $\ell_p$-diameter of a cube in either space does not exceed $\delta$: otherwise, construct a refinement $f:\tilde P\to P$ admitting weights $\tilde u$ and $\tilde w$ with the property that (1) $|\tilde u(\tilde a)-\tilde w(\tilde a)|\leq|u(a)-w(a)|$ for every $a\in\tilde P$, and (2) every cube of $\tilde X=\cubep{\tilde u}{\tilde P}_{\tilde B}$ and of $\tilde Y=\cubep{\tilde w}{\tilde P}$ has $\ell_p$-diameter no more than $\delta$.
Proposition~\ref{subdivision} tells us we may replace $X$ and $Y$ by the spaces $\tilde X$ and $\tilde Y$ realizing the refinement, respectively, while replacing $P$ by $\tilde P$ and $\epsilon$ by the smaller $\epsilon':=\norm{\tilde u-\tilde w}_1\leq\epsilon$.

We proceed to define a relation $R\subseteq X\times Y$ by setting $(x,y)\in R$ if and only if the points $\rho\inv_u(x)$ and $\rho\inv_w(y)$ share a cube in $\cube{}{P}_B$.

Let $\mbf{p}=(x_0,\ldots,x_n)$ be any taut string in $X$. Then $\mbf{q}=(y_i:=\rho_w\rho\inv_u(x_i))$ is a taut string in $Y$, and both may be written as $\mbf{p}=\rho_u(\mbf{t})$, $\mbf{q}=\rho_w(\mbf{t})$ for a taut string in $\cube{}{P}_B$. One may write:
\begin{eqnarray*}
	\length{\mbf{p}}
		&=&\sum_{i=1}^n\left\Vert x_i-x_{i-1} \right\Vert_p
		=\sum_{i=1}^n\left\Vert u\cdot|t_i-t_{i-1}| \right\Vert_p
\end{eqnarray*}
and similarly for $\mbf{q}$. Then:
\begin{eqnarray*}
	\left|\length{\mbf{p}}-\length{\mbf{q}}\right|
		&\leq&\sum_{i=1}^n\left|
			\left\Vert u\cdot|t_i-t_{i-1}|\right\Vert_p
			-\left\Vert w\cdot|t_i-t_{i-1}|\right\Vert_p
		\right|\\
		&\leq&\sum_{i=1}^n\left\Vert |u-w|\cdot|t_i-t_{i-1}| \right\Vert_p\\
		&\leq&\sum_{i=1}^n\left\Vert |u-w|\cdot|t_i-t_{i-1}| \right\Vert_1\\
		&=&\sum_{i=1}^n\sum_{a\in B}|u(a)-w(a)|\cdot|t_i(a)-t_{i-1}(a)|\\
		&=&\sum_{a\in B}|u(a)-w(a)|\sum_{i=1}^n|t_i(a)-t_{i-1}(a)|\,,
\end{eqnarray*}
Now we make the observation that, if either of $\mbf{p}$, $\mbf{q}$, $\mbf{t}$ is a geodesic string, then each $t_i(a)$ is (weakly) monotone in $i$. For any of these cases we therefore obtain a bound of $1$ on the internal sum of the last expression, producing:
\begin{equation}\label{deformation bound}
	\left|\length{\mbf{p}}-\length{\mbf{q}}\right|
		\leq\Vert u-w\Vert_1\,.
\end{equation}

Let $f=\rho_w\circ\rho\inv_u:X\to Y$. Take points $x,x'\in X$ and $y,y'\in Y$ satisfying $(x,y)\in R$ and $(x',y')\in R$. If $\mbf{p}$ is a geodesic string in $X$ from $x$ to $x'$ of length $\lambda$, then $\mbf{q}=f(\mbf{p})$ is a string of length at most $\lambda+\epsilon$ in $Y$, by \eqref{deformation bound} above. This yields:
\begin{eqnarray*}
	\ellp{w}(y,y')
		&\leq& \ellp{w}(y,f(x))+\ellp{w}(f(x),f(x'))+\ellp{w}(f(x'),y')\\
		&\leq& \ellp{u}(x,x')+\epsilon+2\delta
\end{eqnarray*}
The result now follows by the symmetry of the argument.
\end{proof}
Thus, small deformations of the weight on a piecewise-$\ell_p$ cubing result in only small changes in the distances between its points. This allows us to apply our slightly deeper understanding of unit cubings to the study of the more general locally-compact case.
\begin{definition} Let $P$ be a discrete poc set endowed with a weight $w$. The {\it $n$-th lower rational approximation $\zapprox{n}{P,w}$ of $P$} is obtained as follows: For all $p\in P$ set $u(p)=\floor{n\cdot w(p)}$ and let $\zapprox{n}{P,w}$ denote the unique refinement of the weighted poc set $(P,u)$ with unit weights, rescaled by a factor of $\tfrac{1}{n}$.
\end{definition}
An immediate corollary of the preceding lemma is the following approximation lemma.
\begin{proposition}[Approximation Lemma]\label{prop:approximation lemma} Let $p\in[1,\infty]$ and let $P$ be a finite poc set with weight $w$. Fix a base point $B\in P^\circ$ and set $X=\cube{w}{P}_B$ with the associated picewise-$\ell_p$ metric. For each $n\in\NN$, let $X_n$ denote the dual of its $n$-th lower rational approximation, $n\in\NN$, endowed with the piecewise-$\ell_p$ metric and with the basepoint $B_n$ provided by the refinement map $f_n:\zapprox{n}{P,w}\to P$ as $B_n=f_n^\circ(B)=f_n^{-1}(B)$, by the subdivision lemma (Proposition~\ref{subdivision}). Then, the sequence $(X_n,B_n)$ converges to $(X,B)$ in the pointed Gromov--Hausdorff topology. In particular, in the $\ell_\infty$ case, if $x,y\in\cube{w}{P}_B$ and $x_n,y_n$ are vertices of $X_n$ with $x_n\to x$, $y_n\to y$ and $\mbf{p}_n$ is the normal cube path from $x_n$ to $y_n$, then the sequence $(\mbf{p}_n)_{n=1}^\infty$ converges to a geodesic arc in $X$ joining $x$ with $y$.
\end{proposition}
\begin{proof} For each $n$, let $u_n$ be the weight on $P$ defined by $u_n=\tfrac{1}{n}\left\langle n\cdot w\right\rangle$, and let $X_n'$ denote $\cube{u_n}{P}_B$ endowed with the piecewise-$\ell_p$ metric. Then, by the preceding lemma, the Gromov--Hausdorff distance between $X$ and $X_n'$ (both cubings dual to $P$, with basepoint $B$) does not exceed $\tfrac{|P|}{n}$, while $(X_n',B_n)$ and $(X_n,B_n)$ are related by a pointed isometry, by the subdivision lemma. We may therefore conclude that the sequence of the $(X_n,B_n)$ converges to $(X,B)$.
\end{proof}
As a corollary we obtain the promised reduction of our main result to the finite unit case.
\begin{corollary} If every finite unit piecewise-$\ell_\infty$ cubing is injective, then every finite piecewise-$\ell_\infty$ cubing is injective.
\end{corollary}
\begin{proof} Simply apply Lemma~\ref{proof of limit lemma} to the result of the last proposition.
\end{proof}
Applying Gromov--Hausdorff limits to exhaustions by finite convex sub-cubings one obtains:
\begin{corollary} If every finite unit piecewise-$\ell_\infty$ cubing is injective, then the completion of any locally finite piecewise-$\ell_\infty$ cubing is injective.
\end{corollary}
In particular, if the basepoint $B$ contains no maximal nested sequence with $\ell_1$-summable weights (e.g., in the simplest case, the weights are bounded away from zero), the resulting piecewise-$\ell_\infty$ cubing will be complete, and hence injective. However, if such a sequence exists, we are thrown back to the problem stated above, in the end of Section~\ref{summability}: is it the case that the metric completion of our cubing is not that hard to compute; that one needs only append a bunch of {\it additional} cubings to the $\cube{w}{P}_B$ in order to obtain an injective space?

\medskip
Finally, having reduced our problem to the case of finite unit cubings, let us see what we could glean from studying this special case using rational approximations. Clearly, $\zapprox{n}{P}$ for a unit-weighted finite poc set $P$ produces a refinement $P_n$ where each $d$-dimensional cube of $\cubeinfty{}{P}$ is subdivided into $n^d$ cubes of edge-length $\tfrac{1}{n}$. As a result, both $\cubeinfty{}{P}$ and $\cubeone{}{P}$ may be represented as Gromov--Hausdorff limits of the {\it discrete} rescaled spaces $K_n:=\tfrac{1}{n}(P_n^\circ,\ellinfty{})$ and $M_n:=\tfrac{1}{n}(P_n^\circ,\ellone{})$, respectively. As a consequence we obtain, for this special setting, the following corollary. 
\begin{corollary} Every closed ball $B(p,r)$ in $\cubeinfty{}{P}$ is the limit of a sequence of balls $B(p_n,r_n)\subset K_n$ with $r_n\geq r$. A closed subset $h\subset\cubeone{}{P}$ is a half-space if and only if it is the limit of a sequence of halfspaces $h_n\subset M_n$.
\end{corollary}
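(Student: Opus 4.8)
Since $P$ is finite, $\cubeinfty{}{P}$ and $\cubeone{}{P}$ are compact, and Proposition~\ref{subdivision}(2),(4) identifies $M_n$ and $K_n$ \emph{isometrically} with the vertex set of the $n$-th subdivision of $\cubeone{}{P}$, respectively of $\cubeinfty{}{P}$ --- that is, with the set of points of the complex whose $\RR^B$-coordinates are all integer multiples of $\tfrac1n$. Throughout, ``limit'' is read as Hausdorff limit of closed subsets inside the (compact) ambient space. We plan to treat the ball statement by a soft density-and-geodesicity argument, and the half-space statement by classifying the half-spaces of $M_n$ and identifying their limits with the coordinate half-spaces of $\cubeone{}{P}$.

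\medskip
\emph{Balls.} Fix $B(p,r)\subseteq\cubeinfty{}{P}$. As the subdivision vertices are $\tfrac1n$-dense, choose $p_n\in K_n$ with $\ellinfty{}(p,p_n)\leq\tfrac1n$ and set $r_n:=r+\tfrac2n\ (\geq r)$. If $x\in B(p,r)$, a nearest subdivision vertex $x_n$ obeys $\ellinfty{}(p_n,x_n)\leq\tfrac1n+r+\tfrac1n=r_n$, so $x_n\in B_{K_n}(p_n,r_n)$ and $x_n\to x$. Conversely, if $y\in B_{K_n}(p_n,r_n)$ then $\ellinfty{}(p,y)\leq r+\tfrac3n$, and since $\cubeinfty{}{P}$ is geodesic (Corollary~\ref{cor:ellinfty is a length metric}) retracting $y$ along a geodesic towards $p$ produces a point of $B(p,r)$ within $\tfrac3n$ of $y$. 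Hence $B_{K_n}(p_n,r_n)\to B(p,r)$; this case is routine.

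\medskip
\emph{Half-spaces: classifications and the forward implication.} Recall that the half-spaces of $\cubeone{}{P}$ are $\varnothing$, $\cubeone{}{P}$, and the intersections of $\cubeone{}{P}$ with the half-spaces of $\ell^1(B)$ --- equivalently, the preimages of half-spaces of $\RR$ under the median morphisms $x\mapsto x(b)$, $b\in B$ (each being the median embedding $\rho_w$ of Proposition~\ref{properties of the weighted dual} followed by a coordinate projection); these are the coordinate sets $\{x(b)\leq t\}$, $\{x(b)<t\}$ and their complements. On the other hand, by Proposition~\ref{subdivision}(3) and the description of the half-spaces of $P_n^\circ$ as the sets $V_n(a)$, $a\in P_n$, every half-space of $M_n$ is, under the identification above, one of $\varnothing$, $M_n$, $\{v\in M_n:v(b)\leq s\}$ or $\{v\in M_n:v(b)\geq s\}$ with $b\in B$ and $s\in\{0,\tfrac1n,\ldots,1\}$. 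For the forward implication, given a closed half-space $h=\{x(b)\leq t\}$ of $\cubeone{}{P}$ put $h_n:=\{v\in M_n:v(b)\leq t\}$; this is a half-space of $M_n$ (preimage of a half-space of $\RR$), $h_n\subseteq h$, and each $x\in h$ lies within $\card{B}/n$ of $h_n$ --- round $x$ to a vertex of its carrier cube whose $b$-coordinate is $\floor{n\,x(b)}/n\leq x(b)\leq t$. Hence $h_n\to h$; the remaining shapes of $h$ are symmetric or trivial.

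\medskip
\emph{Half-spaces: the reverse implication (the crux).} Let $h_n\subseteq M_n$ be half-spaces with $h_n\to h$. Using the classification of the $h_n$ and the finiteness of $B$, pass to a subsequence along which the type is constant, say $h_n=\{v\in M_n:v(b)\leq s_n\}$ with $b$ fixed, and --- by compactness of $[0,1]$ --- along which $s_n\to t\in[0,1]$. The claim is that $h_n$ then Hausdorff-converges to a closed half-space $H$ of $\cubeone{}{P}$, equal to $\{x(b)\leq t\}$ when $t\in(0,1)$ (the endpoint values $t\in\{0,1\}$ being handled by a short separate argument that invokes the classification of half-spaces of $\cubeone{}{P}$ recalled above). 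The inclusion $\lim h_n\subseteq H$ is immediate from continuity of $x\mapsto x(b)$ and $s_n\to t$. For $H\subseteq\lim h_n$ the key geometric input is that the carrier cube of any $x$ with $x(b)\in(0,1)$ has full $b$-extent $[0,1]$: by Lemma~\ref{lemma:cubes and transversality}, a carrier cube on which $x(b)$ were identically $0$ or $1$ would lie entirely on one side of the wall $\wall{b}$, whereas $x(b)\in(0,1)$ forces that wall through the cube. Hence such $x$ is a limit of subdivision vertices $v_n$ of that same original cube with $v_n(b)=s_n$, whence $\ellone{}(x,v_n)\leq|t-s_n|+\card{B}/n\to0$ and $x\in\lim h_n$ (points with $x(b)\in\{0,1\}$ are dispatched directly). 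Since the full sequence converges, $h=H$ is a half-space of $\cubeone{}{P}$. We expect this reverse implication to be the main obstacle: one must be sure the carrier-cube observation genuinely pins $\lim h_n$ on the wall $\{x(b)=t\}$, and one must treat the endpoint values $t\in\{0,1\}$ --- where $\{x(b)=t\}$ may fail to be nowhere dense, so that $\lim h_n$ can turn out to be a half-space transverse to $\wall{b}$ --- with the requisite (if routine) care. Everything else --- compactness of the complexes, density of subdivision vertices, geodesicity, and the two classifications --- is already in place or soft.
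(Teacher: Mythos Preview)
Your plan is correct and far more detailed than anything the paper offers: the corollary is stated there without proof, as an immediate consequence of the Approximation Lemma (Proposition~\ref{prop:approximation lemma}) together with the identification of $K_n,M_n$ with the subdivision vertex sets supplied by Proposition~\ref{subdivision}. Your argument is exactly the fleshing-out one would expect of that implicit claim, and the ball case and the forward half-space implication are routine, as you say.

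One comment on what you flag as the crux. Your worry about the endpoint values $t\in\{0,1\}$ is misplaced: once you have passed to a subsequence on which every $h_n$ has the form $\{v\in M_n:v(b)\le s_n\}$ for a \emph{fixed} $b\in B$, no half-space ``transverse to $\wall{b}$'' can possibly appear in the limit. If $s_n\to 0$ then $\lim h_n=\{x:x(b)=0\}=\{x:x(b)\le 0\}$, which is already a closed half-space of $\cubeone{}{P}$ by the very classification you quoted (its complement $\{x(b)>0\}$ is convex); the inclusion $\{x(b)=0\}\subseteq\lim h_n$ needs only the trivial observation that any such $x$ is approximated by subdivision vertices with $b$-coordinate $0\le s_n$, so your carrier-cube argument is not even required here. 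Symmetrically, $s_n\to 1$ gives $\lim h_n=\cubeone{}{P}$. Thus the endpoint cases are strictly easier than the generic one, and your argument for $t\in(0,1)$ --- which is sound --- already contains everything needed.
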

As a result we obtain an even deeper reduction of the main result to a finite, discrete problem.
\begin{corollary}\label{final reduction} If every finite poc set has the property
\begin{equation}
	\text{Any ball of integer radius in } (P^\circ,\ellinfty{}) \text{ is a convex subset of } (P^\circ,\ellone{}) \tag{$\dagger$}
\end{equation}
then every locally finite piecewise-$\ell_\infty$ cubing is an injective metric space.
\end{corollary}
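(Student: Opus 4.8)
The plan is to obtain Corollary~\ref{final reduction} by feeding the hypothesis $(\dagger)$ through the reductions already in place. Assume $(\dagger)$ holds for every finite poc set. By the first corollary following Proposition~\ref{prop:approximation lemma}, it suffices to show that every \emph{finite} unit piecewise-$\ell^\infty$ cubing $X=\cubeinfty{}{P}$, with $P$ a finite poc set, is injective. For this I would combine the Aronszajn--Panitchpakdi criterion (Theorem~\ref{injectivity criterion}) with Helly's theorem (Theorem~\ref{Helly theorem}): by Corollary~\ref{cor:ellinfty is a length metric} the space $X$ is geodesic, so a family of closed $\ell^\infty$-balls $\{B(p_i,r_i)\}_{i=1}^k$ with $r_i+r_j\geq\ellinfty{}(p_i,p_j)$ for all $i,j$ is automatically pairwise intersecting (on a geodesic from $p_i$ to $p_j$ take the point at distance $\min(r_i,\ellinfty{}(p_i,p_j))$ from $p_i$). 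Since $(X,\ellone{})$ is a median space (Proposition~\ref{properties of the weighted dual}), once we know that each $B(p_i,r_i)$ is \emph{convex with respect to $\ellone{}$}, Helly's theorem produces a common point; thus $X$ is hyper-convex, hence injective. Everything therefore reduces to the statement that in a finite unit piecewise-$\ell^\infty$ cubing every closed $\ell^\infty$-ball $B(p,r)$ is $\ell^1$-convex.

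To prove this I would transport $(\dagger)$ from the vertex sets of the subdivisions of $X$ to $X$ itself. Let $P_n=\zapprox{n}{P}$ be the $n$-th lower rational approximation (again a finite poc set), and set $K_n=\tfrac{1}{n}(P_n^\circ,\ellinfty{})$ and $M_n=\tfrac{1}{n}(P_n^\circ,\ellone{})$, so that by the corollaries following Proposition~\ref{prop:approximation lemma} the spaces $X=\cubeinfty{}{P}$ and $\cubeone{}{P}$ are the pointed Gromov--Hausdorff limits of $(K_n)$ and $(M_n)$, the ball $B(p,r)\subset X$ is the limit of a sequence of balls $B(p_n,r_n)\subset K_n$ with $r_n\geq r$, and the closed $\ell^1$-half-spaces of $X$ are exactly the limits of half-spaces of the $M_n$. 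Since distances in $K_n$ take values in $\tfrac{1}{n}\ZZ$ we may take $r_n\in\tfrac{1}{n}\ZZ$, and then $r_n\to r$. Applying $(\dagger)$ to the finite poc set $P_n$ shows that each $B(p_n,r_n)$, being an $\ell^\infty$-ball of integer radius, is convex in $M_n$, hence (Proposition~\ref{halfspace presentation of convex sets}) an intersection of closed $\ell^1$-half-spaces of $M_n$. Given $x\notin B(p,r)$, i.e. $\ellinfty{}(p,x)>r$, choose vertices $p_n\to p$ and $x_n\to x$; for large $n$ we then have $x_n\notin B(p_n,r_n)$, so some half-space $h_n\subset M_n$ contains $B(p_n,r_n)$ but not $x_n$. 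Because the coordinate set $B$ is finite, the $h_n$ vary in a precompact family of closed $\ell^1$-half-spaces $\cl{\half{b}(t)},\cl{\half{b^\ast}(t)}$ of $X$, so along a subsequence $h_n\to h$, a closed $\ell^1$-half-space with $B(p,r)\subseteq h$; if we can also guarantee $x\notin h$, then $B(p,r)$ equals the intersection of the closed $\ell^1$-half-spaces of $X$ containing it, hence is $\ell^1$-convex by Proposition~\ref{halfspace presentation of convex sets}, and the corollary follows.

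The step I expect to be the obstacle is this final limiting argument --- precisely, ensuring the separating half-space $h$ still excludes $x$. The difficulty is that a limit of closed half-spaces $\cl{\half{b_n^\ast}(t_n)}=\{\,y:y(b_n)\geq t_n\,\}$ is again a \emph{closed} half-space $\{\,y:y(b)\geq t\,\}$, so if the limiting threshold satisfies $x(b)=t$ exactly then $x$ has not been separated; as $x$ is only assumed strictly outside $B(p,r)$, one must rule out this degenerate coincidence, e.g. by selecting $b_n$ (among the finitely many walls) to be one along which $x_n$ lies a definite fraction of a cube-width from the separating hyperplane, or by enlarging the $r_n$ so that $B(p,r)$ sits strictly inside $\lim B(p_n,r_n)$ while still extracting a separator from $(\dagger)$. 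A technically smoother alternative, which I would actually pursue, bypasses half-spaces: prove instead the ``interval form'' of the claim, namely that a pointed Gromov--Hausdorff limit of $\ell^1$-convex subsets of the $M_n$ is $\ell^1$-convex in $X$. This rests on the clean identity that in $\ell^1$ the excess $\ellone{}(u_n,z_n)+\ellone{}(z_n,v_n)-\ellone{}(u_n,v_n)$ equals twice the distance from $z_n$ to the interval $I(u_n,v_n)$, so that a point approximately on an $\ell^1$-geodesic is correspondingly close to an actual one; combined with $(\dagger)$, which gives $I(u_n,v_n)\subseteq B(p_n,r_n)$ whenever $u_n,v_n\in B(p_n,r_n)$, this yields $I(u,v)\subseteq B(p,r)$ for $u,v\in B(p,r)$ in the limit. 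The remaining verifications --- that the discrete balls converge to $B(p,r)$ with the right radii, and that the excess estimate survives the rescaled limit --- are routine.
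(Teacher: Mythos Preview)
Your proposal is correct and takes essentially the same approach as the paper's own proof, which is much terser: the paper simply asserts that the preceding corollary together with $(\dagger)$ makes every $\ell^\infty$-ball in a finite unit cubing $\ell^1$-convex, and then invokes Helly's theorem and geodesicity to conclude hyper-convexity. Your median-based alternative for the limiting step---via the identity $\ellone{}(u,z)+\ellone{}(z,v)-\ellone{}(u,v)=2\,\ellone{}\!\big(z,\med(u,v,z)\big)$, valid in any median space---cleanly resolves the half-space boundary issue you flagged and is in fact more explicit than anything the paper spells out at this point.
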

\begin{proof} By the preceding corollary, if every finite poc set has $(\dagger)$, then every ball in any finite piecewise-$\ell_\infty$ cubing $X$ is $\ell_1$-convex. By Helly's theorem (Theorem~\ref{Helly theorem}), any finite family of pairwise-intersecting balls $X$ has a common point. Since $X$ is a geodesic space, hyper-convexity follows.
\end{proof}

\section{Proof of the Main Result, Final Remarks}
In view of Corollary~\ref{final reduction}, proving our main theorem requires merely the verification of property ($\dagger$) for finite poc sets $P$. 
\begin{lemma}\label{ball-separation lemma} Suppose $P$ is a finite poc set and let $U, W\in P^\circ$ be vertices with $\ellinfty{}(U,W)>n$ for some $n\in\NN$. Then there exists a wall of $P$ separating $W$ from the $\ell_\infty$-ball of radius $n$ about $U$. 
\end{lemma}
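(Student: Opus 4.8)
The plan is to exhibit the separating wall explicitly, reading it off from a largest nested subset of $U\minus W$. First I would apply Proposition~\ref{length of normal cube path}: since $\ellinfty{}(U,W)$ equals the largest cardinality of a nested subset of $U\minus W$, and this number is an integer strictly greater than $n$, there is a nested set $N=\{a_1,\ldots,a_{n+1}\}\subseteq U\minus W$ with exactly $n+1$ elements. As already noted in the proof of Lemma~\ref{separation lemma}, a nested subset of $U\minus W$ is automatically a chain: for $a,b\in U\minus W$ the relations $a\leq b^\ast$ and $a^\ast\leq b$ are forbidden by the coherence of $U$ and of $W$, respectively, so only $a\leq b$ or $b\leq a$ can hold. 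Hence I may assume $a_1<a_2<\cdots<a_{n+1}$.

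Next I would take $a:=a_{n+1}$, the top of this chain, and claim that the wall $\{a,a^\ast\}$ is the one we want. It is proper, since $a\in U$ and $a^\ast\in W$ while $\minP$ lies in no ultra-filter; and $a^\ast\in W$ means $W\in V(a^\ast)$, so $W$ sits on the $a^\ast$-side. What remains is to show that the $\ell^\infty$-ball of radius $n$ about $U$ lies on the $a$-side, i.e.\ that every vertex $V\in P^\circ$ with $\ellinfty{}(U,V)\leq n$ satisfies $a\in V$; then the ball is contained in the half-space $V(a)$ of $(P^\circ,\ellone{})$, which is exactly the form needed to feed into Corollary~\ref{final reduction}.

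The crux is the contrapositive. Suppose $V\in P^\circ$ has $a^\ast=a_{n+1}^\ast\in V$. Then for each $i\in\{1,\ldots,n+1\}$ we have $a_i\in U$ by construction, and if $a_i$ were also in $V$ then, since $a_i\leq a_{n+1}=(a_{n+1}^\ast)^\ast$, the two elements $a_i,a_{n+1}^\ast\in V$ would violate the coherence of $V$. So $a_i\notin V$, hence $a_i\in U\minus V$; thus $\{a_1,\ldots,a_{n+1}\}$ is a chain of size $n+1$ contained in $U\minus V$. By Corollary~\ref{lower bound on distance} (or, again, Proposition~\ref{length of normal cube path}) this forces $\ellinfty{}(U,V)\geq n+1>n$, so $V$ is not in the ball. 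Equivalently, the ball is contained in $V(a)$, and the wall $\{a,a^\ast\}$ separates it from $W$.

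I expect the only genuinely delicate point to be the observation that it is precisely the \emph{maximum} of the chain whose wall works --- an arbitrary element of $N$ will not, since the coherence step above relies on $a_i\leq a_{n+1}$ --- together with keeping the direction of the coherence inequality straight. Once that is in place the verification is a one-line use of coherence, and the passage from the statement about vertices of $P^\circ$ to the ball inside the full cubing $\cubeinfty{}{P}$, should one want it, is routine and in any case not needed for the main theorem.
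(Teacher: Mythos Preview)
Your proof is correct and follows essentially the same approach as the paper: find a sufficiently long chain in $U\minus W$, take its maximum element $a$, and use coherence to show that any $V\in V(a^\ast)$ must contain the whole chain in $U\minus V$, forcing $\ellinfty{}(U,V)>n$. The only cosmetic difference is that the paper constructs the chain via the normal cube path from $U$ to $W$ (picking $a\in A_m$ and descending), whereas you read the chain off directly from the distance formula of Proposition~\ref{length of normal cube path}; both routes yield the same separating wall.
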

\begin{proof} Let $B$ denote the $\ell_\infty$-ball of radius $n$ about $U$. Consider the normal cube path $\mbf{p}=(U_0,\ldots,U_m)$, $m=\ellinfty{}(U,W)$ from $U$ to $W$. As in the proof of Proposition \ref{length of normal cube path}, let us write $A_i=U_i\minus U_{i-1}$ while observing that $U\minus W$ equals the {\it disjoint} union $A_1\cup\cdots\cup A_m$. Pick any $a\in A_m$ and construct a chain $a_1<\ldots<a_{m-1}<a_m=a$ with $a_i\in A_i$ -- again, as in the proof of Proposition~\ref{length of normal cube path}.

We contend that the wall $\{a,a^\ast\}$ is a wall of the kind we are looking for, that is:
\begin{enumerate}
	\item The vertex $W$ is contained in $V(a^\ast)$;
	\item The ball $B$ is contained in $V(a)$.
\end{enumerate}
Indeed, (1) is satisfied by construction: $a\notin W$ means $a^\ast\in W$, which, in turn, means $W\in V(a^\ast)$. 

Property (2) holds by Lemma~\ref{separation lemma}: indeed, any vertex in $V(a^\ast)$ is separated from $U$ by the nested set $\{a_1,\ldots,a_n\}$ and is therefore at $\ell_\infty$-distance at least $n$ from $U$. Since $V(a)=P^\circ\minus V(a^\ast)$ we conclude $B\subseteq V(a)$ and we are done.
\end{proof}
\begin{remark} Note that $P$ need not be finite for the assertion of the lemma to hold in the case when $U$ and $W$ lie in the same connected component of $\cube{}{P}$. By Corollary~\ref{interval}, $U$ and $W$ are contained in a finite sub-cubing, which is spanned by the median interval between them.
\end{remark}

\bigskip
To close, we would like to draw the reader's attention to an alternative line of reasoning having the added benefit of clarifying some leftover questions regarding the metric structure of a piecewise-$\ell_\infty$ cubing. Reading through our own exposition we felt that, in the end, one could not help but wonder at the effort we have put into avoiding a direct computation of $\ellinfty{w}$ in the general case. Why only the unit case?

\begin{figure}[t]
	\begin{center}
		\includegraphics[width=.5\columnwidth]{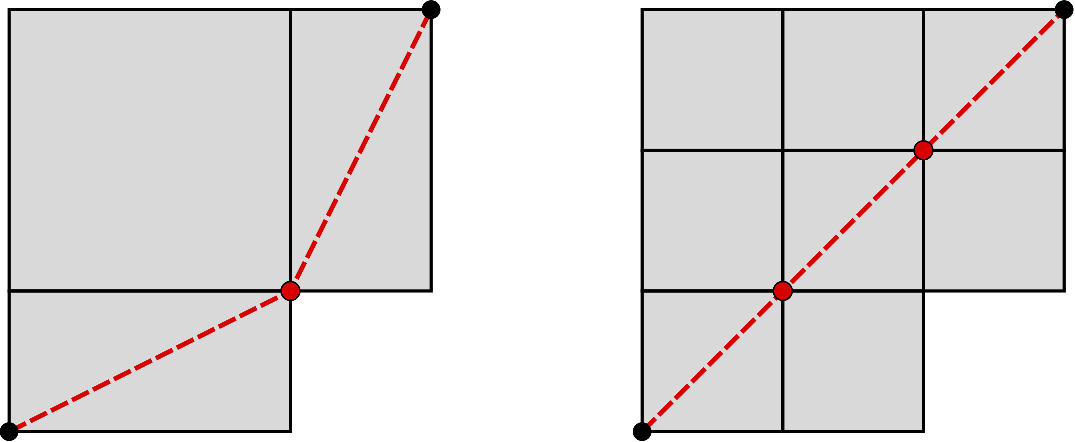}
	\end{center}
	\caption{An example of a non-geodesic normal cube path (left) in a piecewise-$\ell_\infty$ cubing versus a geodesic one in a refined (and hence isometric) cubing (right) -- see Example~\ref{example:bad normal cube path}.\label{fig:bad normal cube path}}
\end{figure}

\begin{example}\label{example:bad normal cube path} Figure \ref{fig:bad normal cube path} compares a weighted cubing drawn in the $\ell_\infty$ plane where the normal cube path joining a pair of points fails to be a geodesic string to a subdivided version of the same cubing, where the normal cube path between the same two points changes into a geodesic one, due to the weights being more uniformly distributed among the walls of the refined poc set.
\end{example}
This example suggests it should be possible to apply the Approximation Lemma (Proposition~\ref{prop:approximation lemma}) in a proof of an explicit formula for $\ellinfty{w}$ based on the formula for unit cubings. One can verify that this is indeed the case:
\begin{proposition} Let $P$ be a discrete poc set with non-degenerate weight $w$ and basepoint $B\in P^\circ$. Then, for any $x,y\in\cube{w}{P}_B$ one has
\begin{equation}
	\ellinfty{w}(x,y)=\max\set{\sum_{a\in N}|x(a)-y(a)|}{N\text{ is a nested subset of }x\minus y}
\end{equation}
(see Definition~\ref{defn:separator}).\ep
\end{proposition}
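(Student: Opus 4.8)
The plan is to prove the two inequalities separately: ``$\geq$'' by an elementary Lipschitz estimate that needs no finiteness hypothesis, and ``$\leq$'' by approximating $x,y$ by vertices of finer and finer unit subdivisions and invoking the unit-case formula of Proposition~\ref{length of normal cube path} through the Approximation Lemma. For the lower bound I would show that $\ellinfty{w}(x,y)\geq\sum_{a\in N}|x(a)-y(a)|$ for every nested $N=\{a_1,\ldots,a_n\}\subseteq x\minus y$ --- the extension of Corollary~\ref{lower bound on distance} and Lemma~\ref{separation lemma} from vertices to arbitrary points. To such an $N$ I attach a ``height function'' $\phi=\sum_{i=1}^n\psi_i\colon\cube{w}{P}_B\to\RR$, where $\psi_i\in[0,w(a_i)]$ is the coordinate of $\cube{w}{P}_B\subseteq\RR^B$ indexing the wall $\{a_i,a_i^\ast\}$, oriented to increase towards the $a_i$-side. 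Since $N$ is nested it contains no transverse pair, so by Lemma~\ref{lemma:cubes and transversality} no single cube of $\cube{w}{P}$ has two of the $a_i$ among its coordinate directions; hence on each cube $\phi$ is affine and varies in at most one coordinate direction, so it is $1$-Lipschitz there for $\norm{\cdot}_\infty$, and therefore $1$-Lipschitz for the broken-path metric $\ellinfty{w}$. Because $N\subseteq x\minus y$, the increments $\psi_i(x)-\psi_i(y)$ are all of one sign and $|\psi_i(x)-\psi_i(y)|=|x(a_i)-y(a_i)|$, so $\ellinfty{w}(x,y)\geq|\phi(x)-\phi(y)|=\sum_{a\in N}|x(a)-y(a)|$.

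For the reverse inequality I would use the Approximation Lemma (Proposition~\ref{prop:approximation lemma}), taking $P$ finite. Let $X_n$ be the dual of the $n$-th lower rational approximation $\zapprox{n}{P,w}$ --- a unit cubing rescaled by $\tfrac1n$ --- and choose vertices $x_n,y_n$ of $X_n$ with $x_n\to x$ and $y_n\to y$. The Approximation Lemma gives $\ellinfty{w}(x,y)=\lim_n\tfrac1n m_n$, where by Proposition~\ref{length of normal cube path} the integer $m_n=\max\set{\card{N}}{N\subseteq x_n\minus y_n\text{ nested in }\zapprox{n}{P,w}}$ is the number of cubes of a normal cube path from $x_n$ to $y_n$. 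It remains to compute $\lim_n\tfrac1n m_n$ from the structure of the refinement $\zapprox{n}{P,w}\to P$: each proper $p\in P$ is replaced by a chain of $\floor{n\,w(p)}$ parallel copies of weight $\tfrac1n$; the refinement map preserves every nesting and transversality relation; and, choosing $x_n,y_n$ suitably, exactly $n|x(p)-y(p)|+O(1)$ of the copies of $p$ lie in $x_n\minus y_n$, namely those whose thin wall separates $x_n$ from $y_n$. Since a nested subset of a vertex separator is a chain, a largest nested subset of $x_n\minus y_n$ is, up to the $O(1)$ per-wall discrepancies, obtained by choosing a chain $p_1<\cdots<p_k$ in $P$ with each $p_j\in x\minus y$ and then taking all admissible copies; dividing by $n$ and letting $n\to\infty$ (the chain length $k$ is at most $\card{P}$, so the accumulated error is $O(1/n)$) gives $\ellinfty{w}(x,y)=\max_{p_1<\cdots<p_k}\sum_j|x(p_j)-y(p_j)|$, which is $\leq\max\set{\sum_{a\in N}|x(a)-y(a)|}{N\subseteq x\minus y\text{ nested}}$.

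Combining the two bounds proves the identity, and shows as a by-product that the maximum is attained and may be taken over chains. The delicate step, I expect, is the combinatorics of the second paragraph: choosing $x_n,y_n$ so that the number of separating copies of each $p$ is $n|x(p)-y(p)|+O(1)$ uniformly, and checking that a largest nested subset of $x_n\minus y_n$ is asymptotically induced from a chain in $P$ --- i.e., that subdivision produces no new long nested sets running across distinct walls of $P$, which is exactly the refinement map's preservation of transversality. A secondary point is the passage from finite to arbitrary discrete $P$, the Approximation Lemma being stated under local finiteness; this is handled by exhaustion, with $\max$ relaxed to $\sup$ when $\cube{}{P}_B$ is not locally finite. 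Everything else --- the Lipschitz estimate, and the invocations of Propositions~\ref{subdivision}, \ref{length of normal cube path} and \ref{prop:approximation lemma} --- is routine.
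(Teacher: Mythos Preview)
Your proposal is correct and matches the paper's own indication: the paper does not actually prove this proposition, only remarking that ``it should be possible to apply the Approximation Lemma (Proposition~\ref{prop:approximation lemma}) in a proof of an explicit formula for $\ellinfty{w}$ based on the formula for unit cubings'' and then writing ``Skipping the proof''. Your lower bound via a $1$-Lipschitz height function (a clean repackaging of Lemma~\ref{separation lemma} for arbitrary points) together with your upper bound via the $n$-th lower rational approximation and Proposition~\ref{length of normal cube path} is exactly the route the authors intend, and you have correctly identified the one delicate step---that maximal chains in $x_n\minus y_n$ descend, up to $O(1)$ per wall of $P$, to chains in $x\minus y$---as well as the reason it goes through (refinement maps preserve transversality, and nested subsets of a vertex separator are automatically chains).
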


Skipping the proof, we would like to observe the fact that the convexity of balls in $\cubeinfty{w}{P}$ now becomes self-evident: if $y\notin B(x,r)$ in $\cubeinfty{w}{P}$, then the above formula provides us with a hyperplane of $\cubeone{w}{P}$ separating $y$ from $B(x,r)$ using essentially the same procedure as we had used in the proof of Lemma~\ref{ball-separation lemma}, avoiding the need for a reduction of the statement regarding the injectivity of piecewise-$\ell_\infty$ cubings to the finite, unit, vertex-only case.

This argument seems to apply Gromov--Hausdorff convergence more sparingly, but ultimately it does nothing but shift the weight (of the technical details) around. On an emotional note, we admit our preference of strategy was motivated by a sense of indebtedness to Isbell's vision: not only did he reveal the way (first followed by Mai and Tang), but he also provided the machinery (the duality theory of median algebras) for the present extension. It would have been ungrateful of us to have picked a different path.

\section{Acknowledgements} The authors gratefully acknowledge the support of Air Force Office of Science Research under the LRIR 12RY02COR, MURI FA9550-10-1-0567 and FA9550-11-10223 grants, respectively. We greatly appreciate the deep and thoughtful review this paper has received, and see ourselves indebted to the referee for multiple insightful comments.

\bibliographystyle{plain}

\end{document}